\numberwithin{equation}{section}
\theoremstyle{plain}
\newtheorem{theorem}{Theorem}[section]
\newtheorem{proposition}[theorem]{Proposition}
\newtheorem{lemma}[theorem]{Lemma}
\newtheorem{corollary}[theorem]{Corollary}
\theoremstyle{definition}
\newtheorem{definition}{Definition}[section]
\newtheorem{remark}[definition]{Remark}
\newcommand{\N}{\mathbb{N}}
\newcommand{\R}{\mathbb{R}}
\newcommand{\PP}{\mathsf{P}} 
\newcommand{\QQ}{\mathsf{Q}} 
\newcommand{\EE}{\mathsf{E}} 
\newcommand{\Bias}{\mathsf{Bias}} 
\newcommand{\Var}{\mathsf{Var}} 
\newcommand{\Cov}{\mathsf{Cov}} 
\newcommand{\bb}[1]{\boldsymbol{#1}} 
\newcommand{\mat}[1]{\mathbf{#1}} 
\newcommand{\OO}{\mathcal{O}}
\newcommand{\oo}{\mathrm{o}}
\newcommand{\rd}{\mathrm{d}}
\newcommand{\ind}{\mathds{1}}
\newcommand{\e}{\varepsilon}
\DeclareMathOperator*{\argmin}{\mathrm{argmin}}
\DeclareMathOperator*{\argmax}{\mathrm{argmax}}
\DeclareMathOperator{\diag}{\mathrm{diag}}
\begin{document}

\begin{frontmatter}

\title{\texorpdfstring{Normal approximations for the multivariate inverse Gaussian distribution and asymmetric kernel smoothing on $d$-dimensional half-spaces}{Normal approximations for the multivariate inverse Gaussian distribution and asymmetric kernel smoothing on d-dimensional half-spaces}}
\runtitle{Asymmetric kernel smoothing on d-dimensional half-spaces}

\begin{aug}
\author[A]{\fnms{L\'eo}~\snm{R. Belzile}\ead[label=e1]{leo.belzile@hec.ca}\orcid{0000-0002-9135-014X}},
\author[B]{\fnms{Alain}~\snm{Desgagn\'e}\ead[label=e2]{desgagne.alain@uqam.ca}\orcid{0000-0001-9047-8566}}
\author[C]{\fnms{Christian}~\snm{Genest}\ead[label=e3]{christian.genest@mcgill.ca}\orcid{0000-0002-1764-0202}}
\and
\author[D]{\fnms{Fr\'ed\'eric}~\snm{Ouimet}\ead[label=e4]{frederic.ouimet2@uqtr.ca}\orcid{0000-0001-7933-5265}}

\address[A]{D\'epartement de sciences de la d\'ecision, HEC Montr\'eal\printead[presep={\\}]{e1}}
\address[B]{D\'epartement de math\'ematiques, Universit\'e du Qu\'ebec \`a Montr\'eal\printead[presep={\\}]{e2}}
\address[C]{Department of Mathematics and Statistics, McGill University\printead[presep={\\}]{e3}}
\address[D]{D\'epartement de math\'ematiques et d'informatique, Universit\'e du Qu\'ebec \`a Trois-Rivi\`eres\printead[presep={\\}]{e4}}

\runauthor{Belzile et al.}
\end{aug}

\begin{abstract}
This paper introduces a novel density estimator supported on $d$-dimensional half-spaces. It stands out as the first asymmetric kernel density estimator for half-spaces in the literature. Using the multivariate inverse Gaussian (MIG) density from \cite{MR2019130} as the kernel and incorporating locally adaptive parameters, the estimator achieves desirable boundary properties. To analyze its mean integrated squared error (MISE) and asymptotic normality, a local limit theorem and probability metric bounds are established between the MIG and the corresponding multivariate Gaussian distribution with the same mean vector and covariance matrix, which may also be of independent interest. Additionally, a new algorithm for generating MIG random vectors is developed, proving to be faster and more accurate than Minami's algorithm based on a Brownian first-hitting location representation. This algorithm is then used to discuss and compare optimal MISE and likelihood cross-validation bandwidths for the estimator in a simulation study under various target distributions. As an application, the MIG asymmetric kernel is used to smooth the posterior distribution of a generalized Pareto model fitted to large electromagnetic storms.
\end{abstract}

\begin{keyword}[class=MSC]
\kwd[Primary ]{62G07}
\kwd[; secondary ]{62E15, 62E20, 62G05, 62H10, 62H12}
\end{keyword}

\bigskip
\begin{keyword}
\kwd{Asymmetric kernel}
\kwd{asymptotic theory}
\kwd{density estimation}
\kwd{half-space}
\kwd{local limit theorem}
\kwd{multivariate inverse Gaussian distribution}
\kwd{multivariate normal distribution}
\kwd{normal approximation}
\kwd{random generation algorithm}
\kwd{smoothing}
\end{keyword}

\end{frontmatter}

\newpage

\tableofcontents 

\newpage

\section{Introduction}\label{sec:intro}

\subsection{Overview of the literature}

The inverse Gaussian (IG) distribution is a versatile stochastic model often used to represent data with positive support and skewness. It was first introduced by~\cite{MR10927} as a limiting distribution in the context of sequential probability ratio tests. Since then, it has found a large number of applications across diverse fields, including actuarial science (claim cost analysis), demography (population growth, fertility), engineering (electrical networks, material fatigue, hydrology), medicine (length of hospital stay, pharmacokinetics, tracer dilution curves), and environmental science (chemical movements in soil, species abundance), among others; see~\cite{MR1622488} for a review.

Several multivariate extensions of the IG distribution have been proposed in the literature, each designed to address specific modeling challenges or theoretical considerations. An early example is the bivariate IG distribution introduced by~\cite{MR613202}. In their approach, the joint density is modeled as the product of univariate IG marginal densities (in variables $x$ and $y$), multiplied by a factor of the form $1 + \rho \Psi(x, y)$, where the correlation coefficient $\rho$ is selected as a function of other parameters to ensure that the joint density integrates to $1$. This construction was suggested by \citet[p.~292]{Parzen:1960}. However, the range of admissible values for $\rho$ depends on the marginal parameters, which limits the flexibility of this model.

Alternatively, \cite{MR707928} defined reproductive exponential families and proposed bivariate IG models through a conditional approach which is convenient for analysis of variance, but their distributions do not have IG margins.

Bivariate IG distributions have also been derived from stochastic processes. For instance, in~\cite{MR813460}, a bivariate IG distribution arises naturally from the joint law of the first-hitting times of a two-dimensional driftless Brownian motion with correlated components, generalizing the well-known fact that the first-hitting time of a standard Brownian motion follows an IG distribution. An earlier proposal had been made by \cite{MR242241}, who investigated a specific continuous-time process for which the joint law of two increments is bivariate~IG. \cite{MR836585} corrected the expression of the bivariate density originally derived by Wasan, extended the construction, and established the basic properties of the resulting model. For a review of bivariate IG distributions, refer to Section~11.21 of~\cite{MR2840643}.

In arbitrary dimension $d \geq 2$, constructing flexible models with (possibly generalized) IG margins is challenging. \cite{MR1183075} addressed this issue by leveraging random additive effects and Poisson mixtures to generate the dependence structure. \cite{MR2979762} mention the possibility of constructing such models by combining IG margins with a copula \citep{Genest/Neslehova:2012a}, but their own extension, which is closed under marginalization, is based instead on a stochastic representation of the IG distribution using a transform of a $t_2$-skewed normal distribution. Unfortunately, both approaches lack detailed information about moments, cumulants, and other fundamental properties such as infinite divisibility, limiting their practical and theoretical utility.

A more advantageous form of multivariate inverse Gaussian (MIG) distribution was proposed by \cite{MR2019130}. The latter is defined on $d$-dimensional half-spaces and is based on a reduced inverse relationship with multivariate Gaussian distributions; see Definition~\ref{def:MIG} below for its density. It allows for easy derivations of its mean vector, covariance matrix, and higher-order cumulants. It is also reproductive, infinitely divisible, closed under linear transformations, and occurs as a limiting case of the multivariate Lagrange distribution \citep{MR2363283}. Furthermore, given its representation as the location of a $d$-variate Brownian motion with correlated components upon hitting a specified hyperplane for the first time, it is straightforward to simulate.

In practice, Minami's MIG distribution has also been the most widely used version of the multivariate inverse Gaussian distribution. For example, \cite{doi:10.1088/0256-307X/24/6/017,doi:10.1142/S0218127409024736} calculate the multiscale entropy of $1/f$ noise under the MIG distribution in the context of physical and physiological signals. \cite{Masuadi_2013_PhD} introduces MIG frailty models with competing risks for their tractability, robustness, and flexible correlation structure. \cite{doi:10.1049/joe.2019.0142} use MIG textures to model spatially correlated background interference and improve target detection when the exact target location is not fully known. \cite{doi:10.1109/LWC.2021.3110892} model propagation delays in molecular communication systems using the MIG distribution and estimate channel parameters with maximum likelihood. As for \cite{doi:10.1109/JSYST.2023.3256061}, they focus on clock synchronization and range estimation in networks using MIG-distributed propagation delays.

All this makes Minami's MIG distribution a valuable tool from a theoretical standpoint and for a wide range of statistical applications. Therefore, from hereon, the MIG distribution will specifically refer to Minami's version of the multivariate inverse Gaussian distribution. It is the version studied in this paper.

\subsection{Density estimation and boundary bias on half-spaces}\label{sec:density.estimation.half.spaces}

While the MIG distribution can be used to model data on half-spaces, it may not always offer sufficient flexibility to capture the underlying distribution that generated the observations. Rather than imposing a global parametric framework, a nonparametric kernel density estimation approach can be adopted instead.

Despite half-spaces being ubiquitous in machine learning research due to concepts such as support vector machine classifiers \citep[e.g.,][]{blanco2022mathematical,liang2022uncertainty}, perceptrons \citep[e.g.,][]{ghazi2021robust,lin2024geometrical}, half-space depth of data \citep[e.g.,][]{kotik2017weighted,pokorny2024another}, and optimization \citep[e.g.,][]{frei2021agnostic}, there appears to be no paper that addresses kernel density estimation on $d$-dimensional half-spaces ($d \geq 2$) and the corresponding \emph{boundary bias problem} that arises when applying standard kernel methods \citep[Section~4.2]{MR1319818} on such domains. The absence of established methods for this issue provided the main motivation for the study reported here.

The boundary bias problem refers to the systematic underestimation of density values near the boundary of the support, caused by the spill-over effect of fixed, symmetric kernels that fail to adapt their shape near boundaries. When the estimation point lies close to the boundary, a substantial portion of the kernel's mass extends outside the support (into regions where the true density is zero), reducing the effective weight assigned to points within the support and leading to biased estimates. For example, the commonly used Gaussian kernel \cite[e.g.,][p.~15]{MR3822372} spreads its mass around the estimation point without accounting for the support's constraints. This mismatch causes a portion of the kernel's contribution to vanish outside the valid region, and the closer the estimation point is to the boundary, the larger the proportion of the kernel's mass that lies outside the support, further amplifying the bias.

Several specialized techniques have been developed to mitigate this issue in other settings, such as $[0,1]$, $[0,\infty)$, products of these intervals, and the $d$-dimensional simplex. For instance, the reflection method \citep[e.g.,][]{Schuster1985,ClineHart1991} mirrors data across the boundary, ensuring that kernels allocate their mass within the support. Boundary kernels \citep[e.g.,][]{GasserMuller1979,GasserMullerMammitzsch1985,Muller1991,Jones1993,ZhangKarunamuni1998,ZhangKarunamuni2000} adjust their shape near boundaries to eliminate spill-over and conform to domain constraints. The transformation approach \citep[e.g.,][]{MarronRuppert1994,RupperCline1994} maps the data to an unbounded space, applies a standard kernel smoothing technique, and then transforms the estimate back to the original domain. Asymmetric kernels \citep[e.g.,][]{AitchisonLauder1985,Chen1999,MR2568128,MR2756441,MR4319409} offer another direct solution by adapting their shape through the parameters of the underlying distribution family for each estimation point. Furthermore, the support's constraint is intrinsically taken into account because the asymmetric kernel's support matches that of the target density.

The primary method put forward in this paper for estimating densities on half-spaces involves using the MIG distribution of \citet{MR2019130} as an asymmetric kernel. It serves as the central foundation of the paper, with the other main developments building upon or complementing this approach, as explained in the next section.

\subsection{Contributions and motivations}

The proposed MIG kernel density estimator, introduced formally in Section~\ref{sec:MIG.kernel.density estimator}, has asymptotically negligible bias near the boundary. It stands out as the first boundary-adapted kernel designed specifically for $d$-dimensional ($d\geq 2$) half-spaces in the literature. Section~\ref{sec:asymptotics} explores its asymptotic properties in detail, including its mean integrated squared error (MISE) and asymptotic normality. Section~\ref{sec:simulation.study} compares its performance to other boundary-adapted competitors of our own making, highlighting its advantages in certain scenarios. An application of the estimator to the problem of smoothing the posterior distribution of a generalized Pareto model fitted to large electromagnetic storms, presented in Section~\ref{sec:data.application}, demonstrates its practical relevance. The introduction of a boundary-adapted density estimation method for half-spaces, together with its theoretical and practical investigation, constitutes the central and first major contribution of this paper.

To derive the asymptotic expression for the pointwise variance of the MIG kernel density estimator in Section~\ref{sec:asymptotics}, which is a fundamental step in obtaining the asymptotic MISE, it was necessary to develop a local approximation of the MIG density with respect to the multivariate normal density sharing the same mean vector and covariance matrix. Hence, Section~\ref{sec:normal.approximations} develops this approximation, where probability metric bounds are also provided as a complement. These normal approximations facilitate the derivation of the MIG kernel density estimator's asymptotic properties, but are likely of independent interest for other large-sample settings that involve the MIG distribution. For completeness, one-dimensional versions of the local limit theorem and probability metric bounds are also established under a slightly different (but commonly used) parametrization, as they seem to be unavailable in the literature. These normal approximations form the second major contribution of this paper.

When including the MIG density among the target densities in the simulation study in Section~\ref{sec:simulation.study}, the need arose to generate MIG random samples. Minami's original proposition for generating such samples \citep[Theorem~2]{MR2019130} relies on the aforementioned Brownian first-hitting location representation. Because the Brownian paths must be discretized in practice, Minami's algorithm is approximate. It becomes computationally very expensive as well, especially in higher dimensions. Section~\ref{sec:rng.MIG} introduces a new algorithm that surpasses Minami's approach in both precision and speed. The method is exact and avoids the inefficiencies of iterative simulations by leveraging the stochastic representation of MIG random vectors as a transformation of a location and scale mixture of Gaussian random vectors, with inverse Gaussian driving noise. The same stochastic representation also enables the provision of an estimator of the cumulative distribution function (cdf) via importance sampling in two dimensions. By providing faster and more accurate sampling, the algorithm reduces computational overhead, facilitates the simulation study, and supports efficient Monte Carlo estimation of optimal bandwidth matrices for the MIG kernel density estimator. This aspect is crucial for comparing optimal MISE and likelihood cross-validation bandwidths in Section~\ref{sec:simulation.study}. This improved MIG simulation algorithm represents the third major contribution of this paper.

\subsection{Outline}

The rest of this paper is organized as follows. Section~\ref{subsec:notation} introduces the notation used throughout, as well as the MIG distribution of~\cite{MR2019130}, along with some of its basic properties, including estimators for its parameters and a useful stochastic representation.

Section~\ref{sec:normal.approximations} is devoted to local limit theorems and probability metric bounds, stated first for the univariate case in Section~\ref{sec:univariate.IG} and for the general multivariate case in Section~\ref{sec:multivariate.IG}. The proofs are deferred to Appendix~\ref{app:proofs.normal.approximations}.

Section~\ref{sec:MIG.kernel.density estimator} introduces the MIG kernel density estimator with positive definite bandwidth matrix $\mat{H}$. Its asymptotic properties are stated in Section~\ref{sec:asymptotics} and proved in Appendix~\ref{app:proofs.application.asymptotics}. Some technical lemmas used in the proofs are relegated to Appendix~\ref{app:technical.lemmas}. A simulation study is conducted in Section~\ref{sec:simulation.study}, and a real-data application is investigated in Section~\ref{sec:data.application}. Information regarding the \textsf{R} code that generated the figures, the simulation study results and the
real-data application is provided in Appendix~\ref{app:R.code}.

Section~\ref{sec:rng.MIG} presents a new algorithm for generating MIG random vectors that is both faster and more accurate than the original method due to \citet{MR2019130}. In Appendix~\ref{app:cdf.evaluation}, the stochastic representation is leveraged to provide a Monte Carlo estimator of the cdf of MIG random vectors.

Section~\ref{sec:conclusion} concludes with a discussion of some of the limitations of the MIG kernel density estimator. Future directions for research are also mentioned. For ease of reference, a list of the abbreviations used herein appears in Appendix~\ref{app:abbreviations}.

\subsection{Notation and basic properties} \label{subsec:notation}

Throughout the paper, let $d \in \N$ denote the dimension of the ambient space. For any normal vector $\bb{\beta} \in \R^d$, define the associated (open) half-space by
\[
\mathcal{H}(\bb{\beta}) = \{\bb{x} \in \R^d : \bb{\beta}^{\top} \bb{x} \in (0, \infty)\}.
\]
The symbols $\bb{0}_d$ and $\bb{1}_d$ denote the $d$-vectors of $0$s and $1$s, respectively, and $\mat{I}_d$ denotes the identity matrix of order $d$. The space of $d \times d$ real positive definite matrices is denoted by $\mathcal{S}_{++}^d$. The notations $\|\cdot\|_2$, $|\cdot|$, and $\mathrm{tr}(\cdot)$ refer to the spectral norm, determinant, and trace, respectively. For a Boolean condition $A$, the indicator function $\ind_A$ returns $1$ if $A$ is true and zero otherwise.

For the purpose of comparing asymptotic behaviors, writing $u = \OO(v)$ means that $\limsup |u/v| \leq C < \infty$ as $\lambda/\mu \to \infty$, $\mu/\omega \to \infty$, $n \to \infty$, or $|\mat{H}|_2 \to 0$, depending on the context. The positive constant $C\in (0,\infty)$ may be a function of the target density $f$ or the dimension $d$, but of no other variable unless explicitly written as a subscript. If $u = \OO(v)$ and $v = \OO(u)$ both hold, one writes $u\asymp v$. Similarly, the notation $u = \oo(v)$ means that $\lim |u/v| = 0$ as $\lambda/\mu\to \infty$ or $\mu/\omega\to \infty$ or $n\to \infty$ or $\|\mat{H}\|_2\to 0$, depending on the context. Subscripts indicate the parameters upon which the convergence rate is contingent.

\begin{definition}[MIG distribution] \label{def:MIG}
Let $\bb{\beta},\bb{\xi}\in \R^d$ be such that $\bb{\beta}^{\top} \bb{\xi}\in (0,\infty)$, and let $\mat{\Omega}\in \mathcal{S}_{++}^d$. A random $d$-vector $\bb{X}$ is said to be $\mathrm{MIG}(\bb{\beta}, \bb{\xi}, \mat{\Omega})$-distributed, and one writes $\bb{X}\sim \mathrm{MIG}(\bb{\beta},\bb{\xi},\mat{\Omega})$, if its density function is given, for every $\bb{x}\in \mathcal{H}(\bb{\beta})$, by
\begin{equation}\label{eq:general.multivariate.density}
k_{\bb{\beta},\bb{\xi},\mat{\Omega}}(\bb{x})
= \frac{\bb{\beta}^{\top} \bb{\xi}\,\,|\mat{\Omega}|^{-1/2}}{(2\pi)^{d/2}(\bb{\beta}^{\top} \bb{x})^{d/2 + 1}}\exp\left\{- \frac{1}{2 \bb{\beta}^{\top} \bb{x}} (\bb{x} - \bb{\xi})^{\top}\mat{\Omega}^{-1} (\bb{x} - \bb{\xi})\right\}.
\end{equation}
\end{definition}

The expectation and covariance matrix of the MIG distribution are
\begin{equation}\label{eq:esp.var.multivariate}
\EE(\bb{X}) = \bb{\xi}, \qquad \Var(\bb{X}) = \bb{\beta}^{\top} \bb{\xi} \, \mat{\Omega},
\end{equation}
by Property~1 of~\cite{MR2019130}. Intuitively, the parameter $\bb{\xi}$ determines the center or location of the distribution. As for $\mat{\Omega}$, it plays the role of a scale matrix, but its influence is modulated by the scalar product $\bb{\beta}^{\top} \bb{\xi}$, which captures the degree of alignment between the expectation $\bb{\xi}$ and the normal vector $\bb{\beta}$.

The MIG distribution is also known to be closed under linear transformations, reproducible and infinitely divisible; refer to Properties~2,~3,~4 of \cite{MR2019130} for precise statements.

The proposition below specifies the maximum likelihood and method-of-moments estimators for the MIG model. The maximum likelihood derivation appears in Section~\ref{app:MLE}, while the expression for the method-of-moments estimator follows directly from \eqref{eq:esp.var.multivariate}.

\begin{proposition}[Maximum likelihood and method-of-moments estimators]\label{prop:MLE}
Let $\bb{\beta}\in \R^d$ be given. For an iid random sample $\bb{X}_1,\ldots,\bb{X}_n\sim \mathrm{MIG}(\bb{\beta},\bb{\xi},\mat{\Omega})$, consider the problem of estimating the pair $(\bb{\xi},\mat{\Omega})\in \mathcal{H}(\bb{\beta}) \times \mathcal{S}_{++}^d$. Then the maximum likelihood estimator is $(\bb{\xi}_n^{\star},\mat{\Omega}_n^{\star})$, where
\[
\bb{\xi}_n^{\star} = \frac{1}{n}\sum_{i=1}^n \bb{X}_i \equiv \bar{\bb{X}}_n,
\quad
\mat{\Omega}_n^{\star} = \frac{1}{n} \sum_{i=1}^n \frac{1}{\bb{\beta}^{\top} \bb{X}_i}(\bb{X}_i - \bar{\bb{X}}_n)(\bb{X}_i - \bar{\bb{X}}_n)^{\top}.
\]
Furthermore, the method-of-moments estimator is $(\bb{\xi}_n^{\dagger},\mat{\Omega}_n^{\dagger})$, where
\[
\bb{\xi}_n^{\dagger} = \bar{\bb{X}}_n,
\quad
\mat{\Omega}_n^{\dagger} = \frac{1}{\bb{\beta}^{\top} \bar{\bb{X}}_n}\frac{1}{n} \sum_{i=1}^n (\bb{X}_i - \bar{\bb{X}}_n)(\bb{X}_i - \bar{\bb{X}}_n)^{\top}.
\]
\end{proposition}

The next proposition restates a known stochastic representation of MIG random vectors originally established by \citet{MR2019130}. This stochastic representation serves as the basis for the exact simulation algorithm for MIG random vectors introduced in Section~\ref{sec:rng.MIG}. The projection matrices $\mat{Q}$ and $\mat{Q}_2$ are also used to build Gaussian kernel estimators on the rotated half-space $\R_{+} \times \R^{d-1}$ for the simulation study in Section~\ref{sec:simulation.study}.

\begin{proposition}[Stochastic representation]\label{prop:MIG.stochastic.representation}
Consider a linear map $\bb{x} \mapsto \mat{Q}\bb{x}$ from $\mathcal{H}(\bb{\beta})$ to $\R_{+} \times \R^{d-1}$, where $\mat{Q} = (\bb{\beta}^{\top}, \mat{Q}_2)^{\top}$. The first row of the matrix $\mat{Q}$ is $\bb{\beta}^{\top}$, and the remaining rows come from the $(d-1) \times d$ matrix $\mat{Q}_2$. The latter is built from the set of the first $d-1$ eigenvectors of the orthogonal projection matrix $\mat{I}_d - \bb{\beta}\bb{\beta}^{\top}/(\bb{\beta}^{\top}\bb{\beta})$, so that $\mat{Q}_2^{\top}\bb{\beta} = \bb{0}_{d-1}$ and $\mat{Q}_2\mat{Q}_2^{\top} = \mat{I}_{d-1}$. In this setting, Theorem~1~(3) of \cite{MR2019130} states that if
\begin{equation}\label{eq:Z2}
\begin{aligned}
&R \sim \mathrm{IG} \big[ \bb{\beta}^{\top}\bb{\xi}, (\bb{\beta}^{\top}\bb{\xi})^2/(\bb{\beta}^{\top}\mat{\Omega}\bb{\beta}) \big], \\
&\bb{Z} \mid \{R = r\} \sim \mathcal{N}_{d-1} [\bb{\mu}(r), r\bb{\Sigma}],
\end{aligned}
\end{equation}
where
\begin{equation}\label{eq:cond.mean.var}
\bb{\mu}(r)
= \mat{Q}_2\{\bb{\xi} + \mat{\Omega}\bb{\beta} (\bb{\beta}^{\top}\mat{\Omega}\bb{\beta})^{-1} (r-\bb{\beta}^{\top}\bb{\xi})\},
\quad
\bb{\Sigma}
= (\mat{Q}^{\vphantom{\top}}_2\mat{\Omega}^{-1}\mat{Q}_2^{\top})^{-1},
\end{equation}
then $(R, \bb{Z}^{\top}){\vphantom{Z}}^{\top}$ is equal in distribution to $\mat{Q}\bb{X}$, where $\bb{X}\sim \mathrm{MIG}(\bb{\beta}, \bb{\xi}, \mat{\Omega})$.
\end{proposition}

\section{Normal approximations}\label{sec:normal.approximations}

\subsection{The univariate case}\label{sec:univariate.IG}

Consider the most common parametrization of the inverse Gaussian distribution, as defined, e.g., in Chapter~15 of~\cite{MR1299979}. For any positive reals $\mu,\lambda\in (0,\infty)$, a random variable $X$ is said to be $\mathrm{IG}(\mu,\lambda)$-distributed, and one writes $X\sim \mathrm{IG}(\mu,\lambda)$, if its density is given, for all $x\in (0,\infty)$, by
\begin{equation}\label{eq:IG.distribution.two.parameters}
k_{\mu,\lambda}(x) = \sqrt{\frac{\lambda}{2\pi x^3}} \exp\left\{-\frac{\lambda (x - \mu)^2}{2 \mu^2 x}\right\}.
\end{equation}
The expectation, variance and skewness are well known to be
\begin{equation}\label{eq:esp.var}
\EE(X) = \mu, \qquad \Var(X) = \frac{\mu^3}{\lambda}, \qquad \frac{\EE\{(X - \mu)^3\}}{\{\Var(X)\}^{3/2}} = 3 \sqrt{\frac{\mu}{\lambda}} \,,
\end{equation}
as reported, e.g., in Section~15.4 of~\cite{MR1299979}. In particular, this means that $\mu$ is a location parameter and that the ratio $\mu/\lambda$ controls the degree of right asymmetry in the distribution.

To facilitate comparison with the MIG distribution in Definition~\ref{def:MIG}, it is convenient to reparametrize the univariate density~\eqref{eq:IG.distribution.two.parameters} as
\begin{equation}\label{eq:IG.distribution.two.parameters.omega}
k_{\mu,\mu^2/\omega}(x) = \frac{\mu}{\sqrt{2\pi \omega x^3}} \exp\left\{-\frac{(x - \mu)^2}{2 \omega x}\right\},
\end{equation}
where $\mu\in (0,\infty)$ as before and $\omega = \mu^2/\lambda\in (0,\infty)$. Under this new parametrization, the expectation, variance and skewness become
\[
\EE(X) = \mu, \qquad \Var(X) = \mu \omega, \qquad \frac{\EE\{(X - \mu)^3\}}{\{\Var(X)\}^{3/2}} = 3 \sqrt{\frac{\omega}{\mu}} \, .
\]
Set $d=1$ and $\bb{\beta} = 1$ in \eqref{eq:general.multivariate.density} and \eqref{eq:esp.var.multivariate} to recover the above.

It is well known that if $X\sim \mathrm{IG}(\mu,\lambda)$, then, as $\lambda/\mu\to \infty$, or equivalently as $\mu/\omega\to \infty$, one has
\begin{equation}\label{eq:CLT.univariate}
\delta_X \equiv \frac{X-\mu}{\sqrt{\mu^3/\lambda}} \rightsquigarrow \mathcal{N}(0,1);
\end{equation}
see, e.g., \citet[p.~5]{MR1622488}. Here and in what follows, $\rightsquigarrow$ refers to convergence in distribution.

The purpose of this section is to study, as $\lambda/\mu\to \infty$ and for any given point $x$, the log-ratio $\mathrm{LR}(x) \equiv \ln\{k_{\mu,\lambda}(x)/\phi_{\mu,\mu^3/\lambda}(x)\}$ between the inverse Gaussian density $k_{\mu,\lambda}(x)$ and the Gaussian density $\phi_{\mu,\mu^3/\lambda}(x)$ which has the same mean and variance. The latter is defined, for all $x\in \R$, by
\begin{equation}\label{eq:normal.distribution}
\phi_{\mu,\mu^3/\lambda}(x) = \sqrt{\frac{\lambda}{2\pi \mu^3}} \exp\left\{-\frac{\lambda(x - \mu)^2}{2 \mu^3}\right\}.
\end{equation}
As long as $x$ is located in the {\it bulk} of the inverse Gaussian distribution (i.e., not too far from the mean $\mu$), the terms in the series expansion of the log-ratio $\mathrm{LR}(x)$ will be shown explicitly in Theorem~\ref{thm:LLT}, along with the rate of convergence of any truncation.

For any $\mu, \lambda\in (0, \infty)$ and $\tau\in (0, \sqrt{\lambda/\mu}]$, the bulk of radius $\tau$ for the inverse Gaussian distribution, and its closure, are defined, respectively, by
\[
\begin{aligned}
&B_{\mu,\lambda}(\tau) = \left\{x\in (0,\infty) : |\delta_x| < \tau\right\}, \\
&\overline{B_{\mu,\lambda}}(\tau) = \left\{x\in (0,\infty) : |\delta_x| \leq \tau\right\}.
\end{aligned}
\]
Note that, by selecting $\tau$ large enough, the probability that $X\sim \mathrm{IG}(\mu,\lambda)$ lands into the bulk region $B_{\mu,\lambda}(\tau)$ has probability arbitrarily close to $1$ as $\lambda/\mu\to \infty$.

\begin{theorem}[Local limit theorem]\label{thm:LLT}
For any positive reals $\mu,\lambda\in (0,\infty)$ and $x\in \smash{B_{\mu,\lambda}(\sqrt{\lambda/\mu})}$, one has
\begin{equation}\label{eq:LLT.univariate.1}
\mathrm{LR}(x) \equiv \ln\left\{\frac{k_{\mu,\lambda}(x)}{\phi_{\mu,\mu^3/\lambda}(x)}\right\} = \sum_{k=1}^{\infty} (-1)^k \left(\frac{3}{2k} - \frac{\delta_x^2}{2}\right) \left(\delta_x \sqrt{\frac{\mu}{\lambda}}\right)^k.
\end{equation}
Furthermore, given any integer $n\in \N$ and any real $\tau\in (0, \sqrt{\lambda/\mu})$, one has, uniformly for all $x\in \overline{B_{\mu,\lambda}}(\tau)$ and as $\lambda/\mu\to \infty$,
\begin{equation}\label{eq:LLT.univariate.2}
\mathrm{LR}(x) - \sum_{k=1}^{n-1} (-1)^k \left(\frac{3}{2k} - \frac{\delta_x^2}{2}\right) \left(\delta_x \sqrt{\frac{\mu}{\lambda}}\right)^k = \OO_{\tau}\left\{\left(\frac{\mu}{\lambda}\right)^{n/2}\right\}.
\end{equation}
In particular, if $n=1$, one has, as $\lambda/\mu\to \infty$,
\begin{equation}\label{eq:LLT.univariate.3}
\mathrm{LR}(x) = \OO_{\tau}\big(\sqrt{ {\mu}/{\lambda}}\big).
\end{equation}
\end{theorem}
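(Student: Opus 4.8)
The plan is to put the log-ratio $LR(x)$ into closed form, expand it using two elementary Maclaurin series, and then control the tail of the resulting series by a geometric majorant. First I would substitute the explicit densities \eqref{eq:IG.distribution.two.parameters} and \eqref{eq:normal.distribution} into the definition of $LR(x)$; the common prefactor $\sqrt{\lambda/(2\pi)}$ cancels, and collecting the remaining terms gives $LR(x) = -\tfrac32\ln(x/\mu) + \tfrac{\lambda(x-\mu)^2}{2\mu^2}\bigl(\tfrac1\mu - \tfrac1x\bigr)$. The key algebraic observation is that the second summand collapses to the single cubic term $\tfrac{\lambda(x-\mu)^3}{2\mu^3 x}$, which equals $\tfrac{\delta_x^2}{2}\cdot\tfrac{x-\mu}{x}$ because $\delta_x^2 = \lambda(x-\mu)^2/\mu^3$. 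Introducing $t := (x-\mu)/\mu = \delta_x\sqrt{\mu/\lambda}$, the condition $x\in B_{\mu,\lambda}(\sqrt{\lambda/\mu})$ is exactly $|t|<1$, and one arrives at the compact form $LR(x) = -\tfrac32\ln(1+t) + \tfrac{\delta_x^2}{2}\cdot\tfrac{t}{1+t}$.

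Next I would insert the Maclaurin expansions $\ln(1+t) = -\sum_{k\ge1}\tfrac{(-1)^k}{k}t^k$ and $\tfrac{t}{1+t} = -\sum_{k\ge1}(-1)^k t^k$, both absolutely convergent for $|t|<1$, add the two series term by term (legitimate by absolute convergence), and read off the coefficient of $t^k$ as $(-1)^k\bigl(\tfrac{3}{2k} - \tfrac{\delta_x^2}{2}\bigr)$; this is precisely \eqref{eq:LLT.univariate.1}. For \eqref{eq:LLT.univariate.2}, on the closed bulk $\overline{B_{\mu,\lambda}}(\tau)$ one has $|\delta_x|\le\tau$, hence $|t|\le\rho := \tau\sqrt{\mu/\lambda}$, with $\rho<1$ (since $\tau<\sqrt{\lambda/\mu}$) and $\rho\to0$ as $\lambda/\mu\to\infty$. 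Using $|\delta_x^2|\le\tau^2$ and $\tfrac{3}{2k}\le\tfrac{3}{2n}$ for $k\ge n$, the remainder $\sum_{k\ge n}(-1)^k\bigl(\tfrac{3}{2k}-\tfrac{\delta_x^2}{2}\bigr)t^k$ is bounded in absolute value by $\bigl(\tfrac{3}{2n}+\tfrac{\tau^2}{2}\bigr)\sum_{k\ge n}\rho^k = \bigl(\tfrac{3}{2n}+\tfrac{\tau^2}{2}\bigr)\tfrac{\rho^n}{1-\rho}$; since $\rho^n = \tau^n(\mu/\lambda)^{n/2}$ and $\tfrac{1}{1-\rho}\le 2$ once $\lambda/\mu$ is large enough that $\rho\le\tfrac12$, this yields the uniform bound $\OO_\tau\bigl((\mu/\lambda)^{n/2}\bigr)$, and the case $n=1$ is \eqref{eq:LLT.univariate.3}.

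I do not expect a serious obstacle. The one place requiring a little care is the algebraic collapse of the exponent difference to the clean cubic term and the recognition that it carries the factor $\delta_x^2/2$, together with justifying the termwise addition and rearrangement of the two absolutely convergent series; the uniformity in \eqref{eq:LLT.univariate.2} then follows routinely from the geometric majorant, since $\tau$ is held fixed and $\rho$ is eventually bounded away from $1$.
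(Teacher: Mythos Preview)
Your proposal is correct and follows essentially the same approach as the paper: both write $LR(x)$ in the form $-\tfrac{3}{2}\ln(1+t)+\tfrac{\delta_x^2}{2}\cdot\tfrac{t}{1+t}$ with $t=\delta_x\sqrt{\mu/\lambda}$ (the paper uses the equivalent $1-(1+t)^{-1}$ for the second factor), then expand the two elementary functions term by term. Your remainder estimate via the explicit geometric majorant is a slightly more detailed version of the paper's appeal to uniformity of truncated Taylor remainders on compact sets, but the argument is otherwise identical.
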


The expansion of the log-ratio in Theorem~\ref{thm:LLT} builds on two general ingredients: the infinite divisibility of the inverse Gaussian distribution, which makes it possible to represent an inverse Gaussian random variable as a sum of independent inverse Gaussian random variables with suitably chosen parameters, and the classical Edgeworth series for sums of independent random variables, originally introduced by \citet{Edgeworth1905}.

The convergence properties of Edgeworth series were rigorously analyzed by \cite{MR14626} through Fourier methods and earlier results of Cram\'er. In their general form, these expansions are well documented; see, e.g., \cite{MR1295242} and \cite{MR3396213}. A key contribution of the present article is to provide an explicit derivation of such expansions for the inverse Gaussian distribution, where explicit computations of this kind are less commonly found in the literature. Analogous explicit expansions were previously obtained for the central and noncentral chi-square distributions by \citet{MR4466042}.

It is worth noting that Edgeworth-type expansions are applicable to both continuous and lattice distributions. For instance, \cite{MR207011} was the first to employ the Fourier approach of Esseen to derive explicit local approximations, akin to those in Theorem~\ref{thm:LLT}, for the Poisson, binomial, and negative binomial distributions.

An alternative elementary method based on Taylor expansions and Stirling's formula was introduced by \cite{MR538319} to obtain local approximations for the binomial distribution, along with refined continuity corrections. Non-asymptotic extensions of those results were provided by \cite{MR4340237} to sharpen Tusnády's inequality in the ``bulk'' regime, an important ingredient in the Komlós--Major--Tusnády approximation controlling deviations between the empirical process and a corresponding sequence of Brownian bridges. Cressie's line of reasoning was also adapted by \cite{MR4630543} to study the asymptotic behavior of the median of the negative binomial distribution jittered by a uniform, settling an open question of \cite{MR4135709}. The proof of Theorem~\ref{thm:LLT} uses a similar elementary approach; see Appendix~\ref{app:proofs.univariate} for details.

Below, graphical evidence is provided for the validity of the log-ratio expansion in Theorem~\ref{thm:LLT}. Three levels of approximation are compared for the case $\lambda = \mu^2$ (or equivalently, $\omega=1$). Define
\[
\begin{aligned}
E_1
&= \sup_{x\in \overline{B_{\mu,\lambda}}(1)} \left|\mathrm{LR}(x)\right|, \\
E_2
&= \sup_{x\in \overline{B_{\mu,\lambda}}(1)} \left|\mathrm{LR}(x) + \left(\frac{3}{2} - \frac{\delta_x^2}{2}\right) \left(\delta_x \sqrt{\frac{\mu}{\lambda}}\right)\right|, \\
E_3
&= \sup_{x\in \overline{B_{\mu,\lambda}}(1)} \left|\mathrm{LR}(x) + \left(\frac{3}{2} - \frac{\delta_x^2}{2}\right) \left(\delta_x \sqrt{\frac{\mu}{\lambda}}\right) - \left(\frac{3}{4} - \frac{\delta_x^2}{2}\right) \left(\delta_x \sqrt{\frac{\mu}{\lambda}}\right)^2\right|.
\end{aligned}
\]
Recall that $x\in \overline{B_{\mu,\lambda}}(1)$ implies $|\delta_x| \leq 1$, so one expects from~\eqref{eq:LLT.univariate.2} that the maximal errors $E_n$ above will have, for every integer $n\in \{1,2,3\}$, the asymptotic behavior
\begin{equation}\label{eq:liminf.exponent.bound.unidim}
\frac{1}{E_n} = \OO\left\{\left(\frac{\mu}{\lambda}\right)^{-n/2}\right\} = \OO(\mu^{n/2}) \quad \Leftrightarrow \quad \liminf_{\mu\to \infty} - \frac{\ln(E_n)}{\ln(\mu)} \geq \frac{n}{2}.
\end{equation}
The two equivalent properties in~\eqref{eq:liminf.exponent.bound.unidim} are illustrated on the left-hand side and right-hand side of Figure~\ref{fig:unidim}, respectively. For a link to the \textsf{R} code used to generate this figure, refer to Appendix~\ref{app:R.code}.

\begin{figure}[!ht]
\centering\hspace{-3mm}
\includegraphics[width = 1\linewidth]{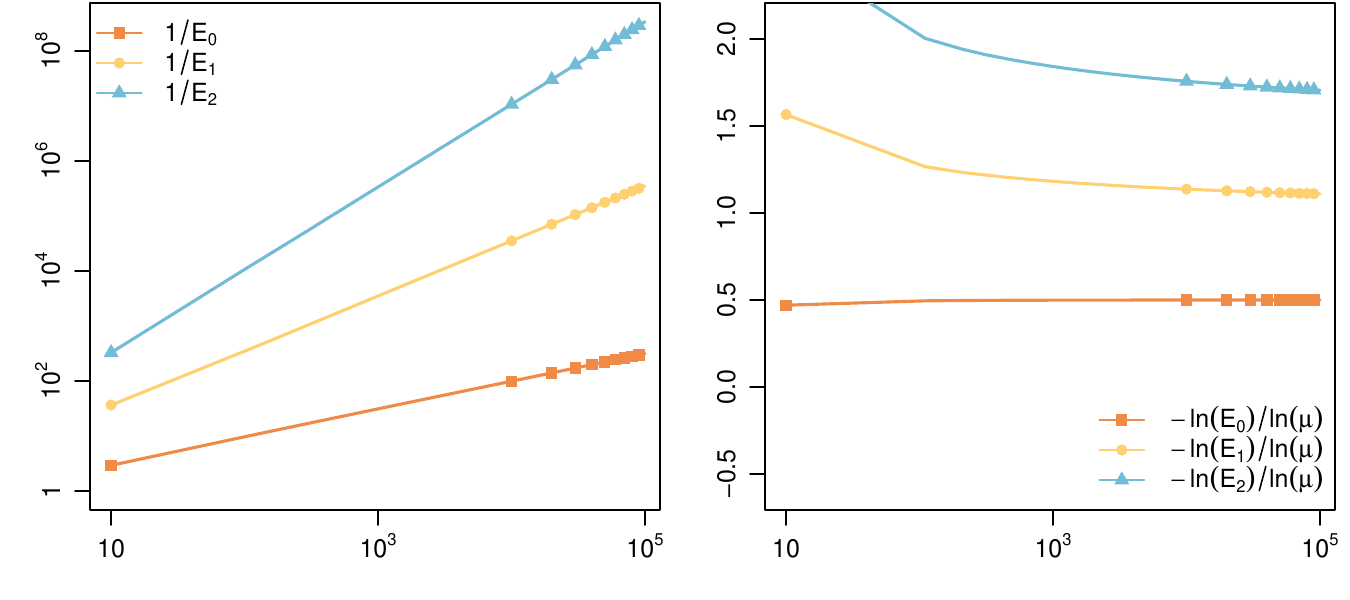}\vspace{-2mm}
\caption{The plot on the left-hand side displays $1/E_n$ as a function of $\mu$, utilizing a logarithmic scale for both the horizontal and vertical axes. It illustrates the improvement achieved by incorporating correction terms from Theorem~\ref{thm:LLT} to the base approximation. On the right-hand side, the plot displays $-\ln(E_n)/\ln(\mu)$ as a function of $\mu$, with the horizontal axis logarithmically scaled. The plot confirms the asymptotic orders of the liminfs in~\eqref{eq:liminf.exponent.bound.unidim} and provides compelling numerical evidence for the validity of Theorem~\ref{thm:LLT}.}\label{fig:unidim}
\end{figure}

By applying the local approximations of Theorem~\ref{thm:LLT} in the bulk and by showing that the contributions outside the bulk are negligible using concentration inequalities, probability metric bounds between the measures induced by the densities $k_{\mu,\lambda}$ and $\phi_{\mu, \mu^3/\lambda}$ are established. The following is a global result.

\begin{theorem}[Probability metric bounds]\label{thm:total.variation}
Let $\PP_{\mu,\lambda}$ be the measure on $\R$ induced by the IG density $k_{\mu,\lambda}$ in~\eqref{eq:IG.distribution.two.parameters}. Let $\QQ_{\mu,\lambda}$ be the measure on $\R$ induced by the Gaussian density $\phi_{\mu, \mu^3/\lambda}$ in~\eqref{eq:normal.distribution}. Then, for any positive reals $\mu,\lambda\in (0,\infty)$, one has
\begin{equation}\label{eq:bound.Hellinger.univariate}
\mathfrak{H}(\PP_{\mu,\lambda},\QQ_{\mu,\lambda}) \leq C \sqrt{{\mu}/{\lambda}},
\end{equation}
where $C\in (0,\infty)$ is a universal constant and $\mathfrak{H}(\cdot,\cdot)$ denotes the Hellinger distance. The bound~\eqref{eq:bound.Hellinger.univariate} is also valid if one replaces the Hellinger distance by any of the following probability metrics: discrepancy metric, Kolmogorov (or uniform) metric, L\'evy metric, Prokhorov metric, total variation.
\end{theorem}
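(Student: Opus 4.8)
The plan is to reduce all the listed probability metrics to the Hellinger distance (or to total variation) via standard universal comparison inequalities, and then establish the Hellinger bound \eqref{eq:bound.Hellinger.univariate} directly by a bulk/tail decomposition. Recall the chain of inequalities among probability metrics on $\R$ (see, e.g., Gibbs and Su, 2002): the total variation distance $\mathrm{TV}$ satisfies $\mathrm{TV}(\PP,\QQ) \le \sqrt{2}\,H(\PP,\QQ)$; the discrepancy metric and the Kolmogorov (uniform) metric are both bounded above by $\mathrm{TV}$; the Lévy metric is bounded above by the Kolmogorov metric; and the Prokhorov metric satisfies $\pi(\PP,\QQ)^2 \le \mathrm{TV}(\PP,\QQ)$ hence $\pi \le \mathrm{TV}^{1/2}$, but since here $\mathrm{TV} = \OO(\sqrt{\mu/\lambda})$ will itself be small, one in fact gets $\pi = \OO((\mu/\lambda)^{1/4})$ — so strictly the clean statement is that each of these metrics is $\OO(\sqrt{\mu/\lambda})$ because each is dominated by $\mathrm{TV}$ which is dominated by $H$. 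Thus it suffices to prove the Hellinger bound.

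\medskip

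\noindent\textbf{For the Hellinger bound}, write $H^2(\PP_{\mu,\lambda},\QQ_{\mu,\lambda}) = \tfrac12 \int_{\R} \bigl(\sqrt{k_{\mu,\lambda}(x)} - \sqrt{\phi_{\mu,\mu^3/\lambda}(x)}\bigr)^2 \,\rd x$, noting that $k_{\mu,\lambda}$ is supported on $(0,\infty)$ while $\phi_{\mu,\mu^3/\lambda}$ charges all of $\R$; the mass $\phi_{\mu,\mu^3/\lambda}$ places on $(-\infty,0]$ is $\Phi(-\sqrt{\lambda/\mu})$, which decays superpolynomially in $\lambda/\mu$ and is absorbed into the bound. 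Fix a bulk radius, say $\tau = \sqrt{\tfrac12 \ln(\lambda/\mu)}$ (or any slowly growing radius with $\tau^2 = \oo(\sqrt{\lambda/\mu})$), and split the integral over $\overline{B_{\mu,\lambda}}(\tau)$ and its complement inside $(0,\infty)$. On the bulk, use Theorem~\ref{thm:LLT}: by \eqref{eq:LLT.univariate.3}, $LR(x) = \OO_\tau(\sqrt{\mu/\lambda})$ uniformly, so $\sqrt{k_{\mu,\lambda}/\phi_{\mu,\mu^3/\lambda}} = \exp\{LR(x)/2\} = 1 + \OO(\sqrt{\mu/\lambda})$, and therefore $\int_{\overline{B_{\mu,\lambda}}(\tau)} (\sqrt{k} - \sqrt{\phi})^2 \le \OO(\mu/\lambda) \int \phi_{\mu,\mu^3/\lambda} = \OO(\mu/\lambda)$ — one must track that the $\OO_\tau$ constant grows only polynomially in $\tau$ (indeed like a power of $\tau^2 \sim \ln(\lambda/\mu)$), which is swallowed by the gap between $\mu/\lambda$ and the claimed $\sqrt{\mu/\lambda}$. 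Off the bulk, bound $(\sqrt{k}-\sqrt{\phi})^2 \le 2(k+\phi)$ and use tail/concentration bounds: $\PP_{\mu,\lambda}(|\delta_X|\ge \tau)$ and $\QQ_{\mu,\lambda}(|\delta_X|\ge \tau)$ are each $\OO(e^{-c\tau^2})$ by a Gaussian-type bound for $\QQ$ and by a Chernoff bound (or the known IG tail estimates, e.g.\ from the mgf) for $\PP$; with $\tau^2 \sim \ln(\lambda/\mu)$ this gives $\OO((\mu/\lambda)^c)$, negligible against $\sqrt{\mu/\lambda}$ once the constants are chosen appropriately. Combining, $H^2 = \OO(\mu/\lambda + e^{-c\tau^2}) = \OO(\mu/\lambda)$, hence $H = \OO(\sqrt{\mu/\lambda})$, i.e.\ \eqref{eq:bound.Hellinger.univariate} with a universal $C$ (after checking the $f$-independent, $d$-independent nature of every constant).

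\medskip

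\noindent\textbf{The main obstacle} is bookkeeping the $\tau$-dependence in Theorem~\ref{thm:LLT}'s $\OO_\tau$: one needs an explicit polynomial-in-$\tau$ (equivalently, polynomial-in-$\ln(\lambda/\mu)$) control on the bulk approximation error, so that choosing $\tau$ growing slowly makes the tail terms subpolynomial while keeping the bulk term at order $\mu/\lambda$ up to logarithmic factors — and then verifying that those logarithmic factors are still $\oo$ of the slack between $\mu/\lambda$ and $\sqrt{\mu/\lambda}$. This is where the proof of Theorem~\ref{thm:LLT} in Appendix~\ref{app:proofs.univariate} must be invoked in a quantitative form rather than as a black box; everything else is a routine combination of the metric-comparison inequalities with standard concentration estimates for IG and Gaussian laws.
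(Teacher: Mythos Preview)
Your route differs from the paper's. You split $H^2=\tfrac12\int(\sqrt{k}-\sqrt{\phi})^2$ directly into bulk and tail pieces. The paper instead invokes an inequality of Carter (2002),
\[
H^2(\PP_{\mu,\lambda},\QQ_{\mu,\lambda}) \;\leq\; 2\,\PP(X\in B^{\hspace{0.2mm}\mathrm{c}}) + \EE\big[LR(X)\,\ind_B(X)\big],
\]
with $B=\overline{B_{\mu,\lambda}}(0.5\sqrt{\lambda/\mu})$ a \emph{fixed}-fraction bulk. The tail term is handled by Chebyshev alone, giving $\PP(B^{\hspace{0.2mm}\mathrm{c}})\leq 4\mu/\lambda$. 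For the bulk term the paper writes $\ind_B=1-\ind_{B^{\hspace{0.2mm}\mathrm{c}}}$, expands $LR$ to first order via Theorem~\ref{thm:LLT}, and exploits the exact IG moments $\EE(\delta_X)=0$ and $\EE(\delta_X^3)=3\sqrt{\mu/\lambda}$: the unrestricted expectation $\sqrt{\mu/\lambda}\,\EE(3\delta_X/2-\delta_X^3/2)$ is then $\OO(\mu/\lambda)$, and the $\ind_{B^{\hspace{0.2mm}\mathrm{c}}}$ correction is controlled by Cauchy--Schwarz and Chebyshev. No growing radius, no exponential concentration, no quantitative unpacking of the $\OO_\tau$ constants.

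Your argument as written has a gap. There is no ``gap between $\mu/\lambda$ and the claimed $\sqrt{\mu/\lambda}$'' into which polylogarithmic factors can be swallowed: the target for $H^2$ is exactly $\OO(\mu/\lambda)$. Using the uniform estimate $LR=\OO_\tau(\sqrt{\mu/\lambda})$ on a bulk of radius $\tau\sim\sqrt{\ln(\lambda/\mu)}$ leaves a polylog factor in front of $\mu/\lambda$ that does not disappear; separately, $\tau^2=\tfrac12\ln(\lambda/\mu)$ gives a Gaussian tail of order $(\mu/\lambda)^{1/4}$, not $\OO(\mu/\lambda)$. The repair is to replace the uniform bound by the pointwise one $|LR(x)|\leq(3+\delta_x^2)\,|\delta_x|\,\sqrt{\mu/\lambda}$, read off directly from the series~\eqref{eq:LLT.univariate.1} whenever $|\delta_x|\sqrt{\mu/\lambda}\leq 1/2$, and then integrate $(e^{LR/2}-1)^2\phi$ against a finite Gaussian moment to get $\OO(\mu/\lambda)$ with no logs; meanwhile take $\tau$ a small power of $\lambda/\mu$ (e.g., $(\lambda/\mu)^{1/8}$) so both tails decay superpolynomially. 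With these corrections your approach goes through, but the paper's route is shorter and needs only second moments for the tail.
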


Global normal approximations represent another direction in this area, with Stein's method \citep{MR3194737,MR3654808,MR3655852,AnastasiouEtAl2023} providing a particularly effective approach for bounding distributional distances under less restrictive moment or dependence assumptions. Although this perspective is beyond the scope of the present work, it offers a promising avenue for further research.

\subsection{The multivariate case}\label{sec:multivariate.IG}

In order to study the limiting behavior of the MIG density in Definition~\ref{def:MIG}, the notation is extended by setting $\bb{\xi}=\mu \bb{\xi}_0$ and $\mat{\Omega} = \omega \mat{\Omega}_0$, where the factors $\mu,\omega\in (0,\infty)$ are positive reals, the vector $\bb{\xi}_0\in \R^d$ satisfies $\bb{\beta}^{\top} \bb{\xi}_0 \in (0,\infty)$, and $\mat{\Omega}_0\in \mathcal{S}_{++}^d$ is a positive definite matrix of size $d\times d$. Set $d = \bb{\beta} = \bb{\xi}_0 = \mat{\Omega}_0 = 1$ to obtain the univariate density function in~\eqref{eq:IG.distribution.two.parameters.omega}.

It is known that if $\bb{X}\sim \mathrm{MIG}(\bb{\beta},\mu \bb{\xi}_0,\omega \mat{\Omega}_0)$, then, as $\mu/\omega\to \infty$, one has
\begin{equation}\label{eq:CLT.multivariate}
\bb{\delta}_{\bb{X}} \equiv (\mu\omega \bb{\beta}^{\top} \bb{\xi}_0)^{-1/2} \smash{\mat{\Omega}_0^{-1/2}} (\bb{X} - \mu \bb{\xi}_0) \rightsquigarrow \mathcal{N}_d(\bb{0}_d,\mat{I}_d),
\end{equation}
where $\smash{\mat{\Omega}_0^{1/2}}$ denotes the symmetric square root of $\mat{\Omega}_0$. This is a straightforward consequence of the expression for the MIG cumulant generating function in Theorem~1 of~\cite{MR2019130} combined with L\'evy's continuity theorem for Laplace transforms; see, e.g., \citet[p.~431]{MR0270403}.

As in the univariate case, the goal is to study, as $\mu/\omega\to \infty$ and for any given point $\bb{x}$, the log-ratio $\mathrm{LR}(\bb{x}) \equiv \ln\{k_{\bb{\beta},\mu \bb{\xi}_0, \omega \mat{\Omega}_0}(\bb{x})/\phi_{\mu \bb{\xi}_0, \mu \omega \mat{\Omega}_0}(\bb{x})\}$ between the MIG density $k_{\bb{\beta},\mu \bb{\xi}_0, \omega \mat{\Omega}_0}(\bb{x})$ and the corresponding multivariate normal density $\phi_{\mu \bb{\xi}_0, \mu \omega \mat{\Omega}_0}(\bb{x})$ with the same mean vector and covariance matrix. The latter is defined, for all $\bb{x}\in \R^d$, by
\begin{equation}\label{eq:normal.distribution.multivariate}
\begin{aligned}
\phi_{\mu \bb{\xi}_0,\mu\omega \bb{\beta}^{\top} \bb{\xi}_0\mat{\Omega}_0}(\bb{x})
&= (2\pi)^{-d/2}(\mu\omega \bb{\beta}^{\top} \bb{\xi}_0)^{-d/2} |\mat{\Omega}_0|^{-1/2} \\
&\qquad\times \exp\left\{-\frac{1}{2\mu\omega \bb{\beta}^{\top} \bb{\xi}_0} (\bb{x} - \mu \bb{\xi}_0)^{\top} \mat{\Omega}_0^{-1} (\bb{x} - \mu \bb{\xi}_0)\right\}.
\end{aligned}
\end{equation}

In particular, as long as $\bb{x}$ is located in the {\it bulk} of the MIG distribution (i.e., not too far from the mean $\mu \bb{\xi}_0$), the terms in the series expansion of the log-ratio $\mathrm{LR}(\bb{x})$ will be shown explicitly in Theorem~\ref{thm:LLT.multivariate}, along with the rate of convergence of any truncation. This result rests indirectly on the fact that the MIG distribution is infinitely divisible, as per Property~4 of~\cite{MR2019130}.

For any positive reals $\mu,\omega\in (0,\infty)$ and $\tau\in (0, \sqrt{\mu/\omega}]$, the bulk of radius $\tau$ for the MIG distribution, and its closure, are defined, respectively, by
\[
\begin{aligned}
&B_{\mu,\omega,\bb{\beta},\bb{\xi}_0,\mat{\Omega}_0}(\tau) = \left\{\bb{x}\in \mathcal{H}(\bb{\beta}) : \frac{\big|\bb{\beta}^{\top} \mat{\Omega}_0^{1/2} \bb{\delta}_{\bb{x}}\big|}{\sqrt{\bb{\beta}^{\top} \bb{\xi}_0}} < \tau\right\}, \\
&\overline{B_{\mu,\omega,\bb{\beta},\bb{\xi}_0,\mat{\Omega}_0}}(\tau) = \left\{\bb{x}\in \mathcal{H}(\bb{\beta}) : \frac{\big|\bb{\beta}^{\top} \mat{\Omega}_0^{1/2} \bb{\delta}_{\bb{x}}\big|}{\sqrt{\bb{\beta}^{\top} \bb{\xi}_0}} \leq \tau\right\}.
\end{aligned}
\]

Note that, by selecting a large enough $\tau$, the probability that $\bb{X}\sim \mathrm{MIG}(\bb{\beta},\bb{\xi},\mat{\Omega})$ lands into the bulk region $B_{\mu,\omega,\bb{\beta},\bb{\xi}_0,\mat{\Omega}_0}(\tau)$ has probability arbitrarily close to $1$ as $\mu/\omega\to \infty$.

\begin{theorem}[Local limit theorem]\label{thm:LLT.multivariate}
Let the real vectors $\bb{\beta}\in \R^d$, $\bb{\xi}_0\in \mathcal{H}(\bb{\beta})$, and the positive definite matrix $\mat{\Omega}_0\in \mathcal{S}_{++}^d$ be given. For any positive reals $\mu,\omega\in (0,\infty)$ and any real vector $\bb{x}\in \smash{B_{\mu,\omega,\bb{\beta},\bb{\xi}_0,\mat{\Omega}_0}(\sqrt{\mu/\omega})}$, one has
\begin{equation}\label{eq:LLT.multivariate.1}
\begin{aligned}
\mathrm{LR}(\bb{x})
&\equiv \ln\left\{\frac{k_{\bb{\beta},\mu \bb{\xi}_0, \omega\mat{\Omega}_0}(\bb{x})}{\phi_{\mu \bb{\xi}_0,\mu \omega \bb{\beta}^{\top} \bb{\xi}_0 \mat{\Omega}_0}(\bb{x})}\right\} \\
&= \sum_{k=1}^{\infty} (-1)^k \left(\frac{d + 2}{2 k} - \frac{\bb{\delta}_{\bb{x}}^{\top} \bb{\delta}_{\bb{x}}}{2}\right) \left(\frac{\bb{\beta}^{\top} \mat{\Omega}_0^{1/2} \bb{\delta}_{\bb{x}}}{\sqrt{\bb{\beta}^{\top} \bb{\xi}_0}} \sqrt{\frac{\omega}{\mu}}\right)^k.
\end{aligned}
\end{equation}
Furthermore, given any integer $n\in \N$ and any real $\tau\in (0, \sqrt{\mu/\omega})$, one has, uniformly for $\bb{x}\in \overline{B_{\mu,\omega,\bb{\beta},\bb{\xi}_0,\mat{\Omega}_0}}(\tau)$ and as $\mu/\omega\to \infty$,
\begin{multline}\label{eq:LLT.multivariate.2}
\mathrm{LR}(\bb{x}) - \sum_{k=1}^{n-1}(-1)^k \left(\frac{d + 2}{2 k} - \frac{\bb{\delta}_{\bb{x}}^{\top} \bb{\delta}_{\bb{x}}}{2}\right) \left(\frac{\bb{\beta}^{\top} \mat{\Omega}_0^{1/2} \bb{\delta}_{\bb{x}}}{\sqrt{\bb{\beta}^{\top} \bb{\xi}_0}} \sqrt{\frac{\omega}{\mu}}\right)^k \\
= \OO_{\tau,\bb{\beta},\bb{\xi}_0,\mat{\Omega}_0}\left\{\left(\frac{\omega}{\mu}\right)^{n/2}\right\}.
\end{multline}
In particular, if $n = 1$, one has, as $\mu/\omega\to \infty$,
\begin{equation}\label{eq:LLT.multivariate.3}
\mathrm{LR}(\bb{x}) = \OO_{\tau,\bb{\beta},\bb{\xi}_0,\mat{\Omega}_0}\big(\sqrt{{\omega}/{\mu}}\big).
\end{equation}
\end{theorem}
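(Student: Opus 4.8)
\textbf{Proof plan for Theorem~\ref{thm:LLT.multivariate}.}
The plan is to reduce the multivariate statement to the univariate Theorem~\ref{thm:LLT} by a change of variables that linearizes the quadratic form appearing in~\eqref{eq:general.multivariate.density}. First I would write $\bb{y} = \mat{\Omega}_0^{-1/2}(\bb{x} - \mu\bb{\xi}_0)/\sqrt{\mu\omega\,\bb{\beta}^{\top}\bb{\xi}_0}$, so that $\bb{y} = \bb{\delta}_{\bb{x}}$ in the notation of~\eqref{eq:CLT.multivariate}; then $(\bb{x}-\mu\bb{\xi}_0)^{\top}\mat{\Omega}_0^{-1}(\bb{x}-\mu\bb{\xi}_0) = \mu\omega\,\bb{\beta}^{\top}\bb{\xi}_0\,\bb{y}^{\top}\bb{y}$, and $\bb{\beta}^{\top}\bb{x} = \mu\,\bb{\beta}^{\top}\bb{\xi}_0 + \sqrt{\mu\omega\,\bb{\beta}^{\top}\bb{\xi}_0}\,\bb{\beta}^{\top}\mat{\Omega}_0^{1/2}\bb{y}$. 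Factoring $\mu\,\bb{\beta}^{\top}\bb{\xi}_0$ out of $\bb{\beta}^{\top}\bb{x}$, one gets $\bb{\beta}^{\top}\bb{x} = \mu\,\bb{\beta}^{\top}\bb{\xi}_0\,(1 + s)$, where $s \equiv (\bb{\beta}^{\top}\mat{\Omega}_0^{1/2}\bb{y}/\sqrt{\bb{\beta}^{\top}\bb{\xi}_0})\sqrt{\omega/\mu}$ is precisely the scalar quantity whose absolute value defines the bulk region, and whose powers appear in the claimed expansion~\eqref{eq:LLT.multivariate.1}.

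Next I would compute $LR(\bb{x})$ directly from the ratio of~\eqref{eq:general.multivariate.density} to~\eqref{eq:normal.distribution.multivariate}. All the constant prefactors $(2\pi)^{-d/2}$, $|\mat{\Omega}_0|^{-1/2}$, and the powers of $\mu\omega\,\bb{\beta}^{\top}\bb{\xi}_0$ must cancel; what remains is a logarithm of a power of $(1+s)$ coming from the $(\bb{\beta}^{\top}\bb{x})^{-(d/2+1)}$ factor versus its normalizing constant, plus the difference of the two exponents. Concretely, the algebra should yield
\begin{equation*}
LR(\bb{x}) = -\Bigl(\frac{d}{2}+1\Bigr)\ln(1+s) + \frac{\bb{\delta}_{\bb{x}}^{\top}\bb{\delta}_{\bb{x}}}{2}\cdot\frac{s}{1+s},
\end{equation*}
after writing the exponent in~\eqref{eq:general.multivariate.density} as $-\tfrac{1}{2}\mu\omega\,\bb{\beta}^{\top}\bb{\xi}_0\,\bb{y}^{\top}\bb{y}/(\mu\,\bb{\beta}^{\top}\bb{\xi}_0(1+s)) = -\tfrac{1}{2}\bb{y}^{\top}\bb{y}\cdot\tfrac{\omega}{\mu}\cdot\tfrac{1}{1+s}$ and comparing with the Gaussian exponent $-\tfrac12\bb{y}^{\top}\bb{y}$. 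This expression is structurally identical to the univariate log-ratio once one identifies $d/2 + 1$ with $3/2$ at $d=1$ and $s$ with $\delta_x\sqrt{\mu/\lambda}$; indeed the univariate proof in Appendix~\ref{app:proofs.univariate} must pass through exactly this formula.

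Then I would expand the two terms in geometric/logarithmic series in $s$, valid because $|s| < 1$ on $B_{\mu,\omega,\bb{\beta},\bb{\xi}_0,\mat{\Omega}_0}(\sqrt{\mu/\omega})$: using $\ln(1+s) = \sum_{k\ge1}(-1)^{k+1}s^k/k$ and $s/(1+s) = \sum_{k\ge1}(-1)^{k+1}s^k$, collecting the coefficient of $s^k$ gives $(-1)^k\{(d+2)/(2k) - \bb{\delta}_{\bb{x}}^{\top}\bb{\delta}_{\bb{x}}/2\}$, which is~\eqref{eq:LLT.multivariate.1}. For the truncation bound~\eqref{eq:LLT.multivariate.2}, on $\overline{B}(\tau)$ one has $|s| \le \tau\sqrt{\omega/\mu}$, so the tail $\sum_{k\ge n}$ is dominated by a convergent geometric series with ratio $\tau\sqrt{\omega/\mu}\to 0$; the only subtlety is that the coefficients contain $\bb{\delta}_{\bb{x}}^{\top}\bb{\delta}_{\bb{x}}$, which is not bounded by $\tau^2$ directly but only by a constant multiple of $\tau^2$ depending on $\bb{\beta},\bb{\xi}_0,\mat{\Omega}_0$ (since the bulk controls $\bb{\beta}^{\top}\mat{\Omega}_0^{1/2}\bb{\delta}_{\bb{x}}$, not $\|\bb{\delta}_{\bb{x}}\|$). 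Bounding $\bb{\delta}_{\bb{x}}^{\top}\bb{\delta}_{\bb{x}}$ uniformly on the bulk in terms of $\tau$ and the fixed parameters is the one place requiring care — I would handle it by noting that the exponential factor in~\eqref{eq:general.multivariate.density} forces $\bb{y}^{\top}\bb{y} = \OO_{\tau,\bb{\beta},\bb{\xi}_0,\mat{\Omega}_0}(1)$ on the relevant set, or more simply by observing that the whole argument only needs an a priori bound on $\|\bb{\delta}_{\bb{x}}\|$ on $\overline{B}(\tau)$, which follows from the definition of the bulk together with positive-definiteness of $\mat{\Omega}_0$ once one restricts, as one may, to $\tau$ fixed and $\mu/\omega$ large. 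Everything else is routine series manipulation, and the case $n=1$ in~\eqref{eq:LLT.multivariate.3} is the immediate specialization.
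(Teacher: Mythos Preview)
Your approach is essentially identical to the paper's: both compute the log-ratio directly from the two density formulas, arrive at the closed form
\[
LR(\bb{x}) = -\Bigl(\tfrac{d}{2}+1\Bigr)\ln(1+s) + \tfrac{1}{2}\,\bb{\delta}_{\bb{x}}^{\top}\bb{\delta}_{\bb{x}}\,\bigl\{1-(1+s)^{-1}\bigr\}
\]
with your scalar $s$, and then expand $-\ln(1+s)$ and $1-(1+s)^{-1}$ as power series in $s$ to read off the coefficients. (There is a harmless slip in your display of the MIG exponent---a stray $\omega/\mu$ factor; the correct exponent is $-\tfrac{1}{2}\bb{\delta}_{\bb{x}}^{\top}\bb{\delta}_{\bb{x}}/(1+s)$---but your final formula for $LR$ is right and matches the paper's.)

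Where you go beyond the paper is in flagging the need to bound $\bb{\delta}_{\bb{x}}^{\top}\bb{\delta}_{\bb{x}}$ on $\overline{B_{\mu,\omega,\bb{\beta},\bb{\xi}_0,\mat{\Omega}_0}}(\tau)$ for the uniform truncation estimate~\eqref{eq:LLT.multivariate.2}; the paper simply asserts that the truncation errors are uniform on $|s|\le\tau\sqrt{\omega/\mu}$ without commenting on the $\bb{\delta}_{\bb{x}}^{\top}\bb{\delta}_{\bb{x}}$ prefactor multiplying the geometric tail. However, neither of your proposed resolutions works. The bulk is defined solely by the constraint $|\bb{\beta}^{\top}\mat{\Omega}_0^{1/2}\bb{\delta}_{\bb{x}}|/\sqrt{\bb{\beta}^{\top}\bb{\xi}_0}\le\tau$, which for $d\ge 2$ is a slab between two parallel hyperplanes, unbounded in the $d-1$ directions orthogonal to $\mat{\Omega}_0^{1/2}\bb{\beta}$; positive-definiteness of $\mat{\Omega}_0$ does not help, since one can keep $s$ fixed and let $\|\bb{\delta}_{\bb{x}}\|\to\infty$ while staying in $\overline{B}(\tau)$. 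Your other suggestion, that the exponential in~\eqref{eq:general.multivariate.density} ``forces'' $\bb{y}^{\top}\bb{y}=\OO(1)$, conflates smallness of the density with a restriction on the domain. So the concern you raise is genuine and your fixes do not close it---but on this point you are in fact being more careful than the paper's own proof, which elides the same difficulty.
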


As mentioned in Section~\ref{sec:univariate.IG}, Edgeworth series and the Fourier method provide a well-established framework for deriving local distributional approximations. For the multivariate case, these methods are extensively detailed in Chapter 2 of \citet{MR3396213}. The derivations here instead rely on an elementary method using Taylor expansions applied directly to the density function. Explicit computations of this kind for specific distributions are less commonly found in the literature. Notable examples include \citet{MR4394974,MR4449387,MR4358612}, where explicit local expansions have been derived for the Dirichlet, matrix $T$, and Wishart distributions, respectively.

Graphical evidence is provided below for the validity of the log-ratio expansion in Theorem~\ref{thm:LLT.multivariate} when $d = 2$, $\bb{\beta} = (1/2, 1/2)^{\top}$ and $\bb{\xi}_0 = (1, 1)^{\top}$, so that $\smash{\bb{\beta}^{\top} \bb{\xi}_0} = 1$ in particular. The case in which both $\mu\to \infty$ and $\omega=1$ is considered. Three levels of approximation are compared for various choices of $\mat{\Omega}_0$. Define
\[
\begin{aligned}
E_1
&= \sup_{\bb{x}\in \overline{B_{\mu,\omega,\bb{\beta},\bb{\xi}_0,\mat{\Omega}_0}}(1)} \left|\mathrm{LR}(\bb{x})\right|, \\
E_2
&= \sup_{\bb{x}\in \overline{B_{\mu,\omega,\bb{\beta},\bb{\xi}_0,\mat{\Omega}_0}}(1)} \left|\mathrm{LR}(\bb{x}) + \left(\frac{d + 2}{2} - \frac{\bb{\delta}_{\bb{x}}^{\top} \bb{\delta}_{\bb{x}}}{2}\right) \left(\frac{\bb{\beta}^{\top} \mat{\Omega}_0^{1/2} \bb{\delta}_{\bb{x}}}{\sqrt{\bb{\beta}^{\top} \bb{\xi}_0}} \sqrt{\frac{\omega}{\mu}}\right)\right|, \\
E_3
&= \sup_{\bb{x}\in \overline{B_{\mu,\omega,\bb{\beta},\bb{\xi}_0,\mat{\Omega}_0}}(1)} \left|\mathrm{LR}(\bb{x}) + \left(\frac{d + 2}{2} - \frac{\bb{\delta}_{\bb{x}}^{\top} \bb{\delta}_{\bb{x}}}{2}\right) \left(\frac{\bb{\beta}^{\top} \mat{\Omega}_0^{1/2} \bb{\delta}_{\bb{x}}}{\sqrt{\bb{\beta}^{\top} \bb{\xi}_0}} \sqrt{\frac{\omega}{\mu}}\right) \right. \\
&\hspace{40mm}\left. - \left(\frac{d + 2}{4} - \frac{\bb{\delta}_{\bb{x}}^{\top} \bb{\delta}_{\bb{x}}}{2}\right) \left(\frac{\bb{\beta}^{\top} \mat{\Omega}_0^{1/2} \bb{\delta}_{\bb{x}}}{\sqrt{\bb{\beta}^{\top} \bb{\xi}_0}} \sqrt{\frac{\omega}{\mu}}\right)^2\right|.
\end{aligned}
\]
\normalsize

Recall that $\bb{x}\in \overline{B_{\mu,\omega,\bb{\beta},\bb{\xi}_0,\mat{\Omega}_0}}(1)$ implies $|\bb{\beta}^{\top} \mat{\Omega}_0^{1/2} \bb{\delta}_{\bb{x}}| / (\bb{\beta}^{\top} \bb{\xi}_0)^{1/2} \leq 1$, so one expects from~\eqref{eq:LLT.multivariate.2} that the maximal errors $E_n$ above will have, for every integer $n\in \{1,2,3\}$, the asymptotic behavior
\begin{multline}\label{eq:liminf.exponent.bound}
\frac{1}{E_n} = \OO_{\bb{\beta},\bb{\xi}_0,\mat{\Omega}_0}\left\{ \left(\frac{\omega}{\mu}\right)^{-n/2} \right\} = \OO_{\bb{\beta},\bb{\xi}_0,\mat{\Omega}_0}(\mu^{n/2}) \\[-2mm]
\Leftrightarrow \quad \liminf_{\mu\to \infty} - \frac{\ln(E_n)}{\ln(\mu)} \geq \frac{n}{2}.
\end{multline}
The two equivalent properties in~\eqref{eq:liminf.exponent.bound} are illustrated in Figure~\ref{fig:figures/loglog.errors.plots} and Figure~\ref{fig:error.exponents.plots}, respectively, for various choices of $\mat{\Omega}_0$.

\begin{figure}[!htp]
\captionsetup[subfigure]{labelformat=empty}
\centering
\begin{subfigure}[b]{0.333\linewidth}
\centering
\includegraphics[width=\linewidth,height=\linewidth]{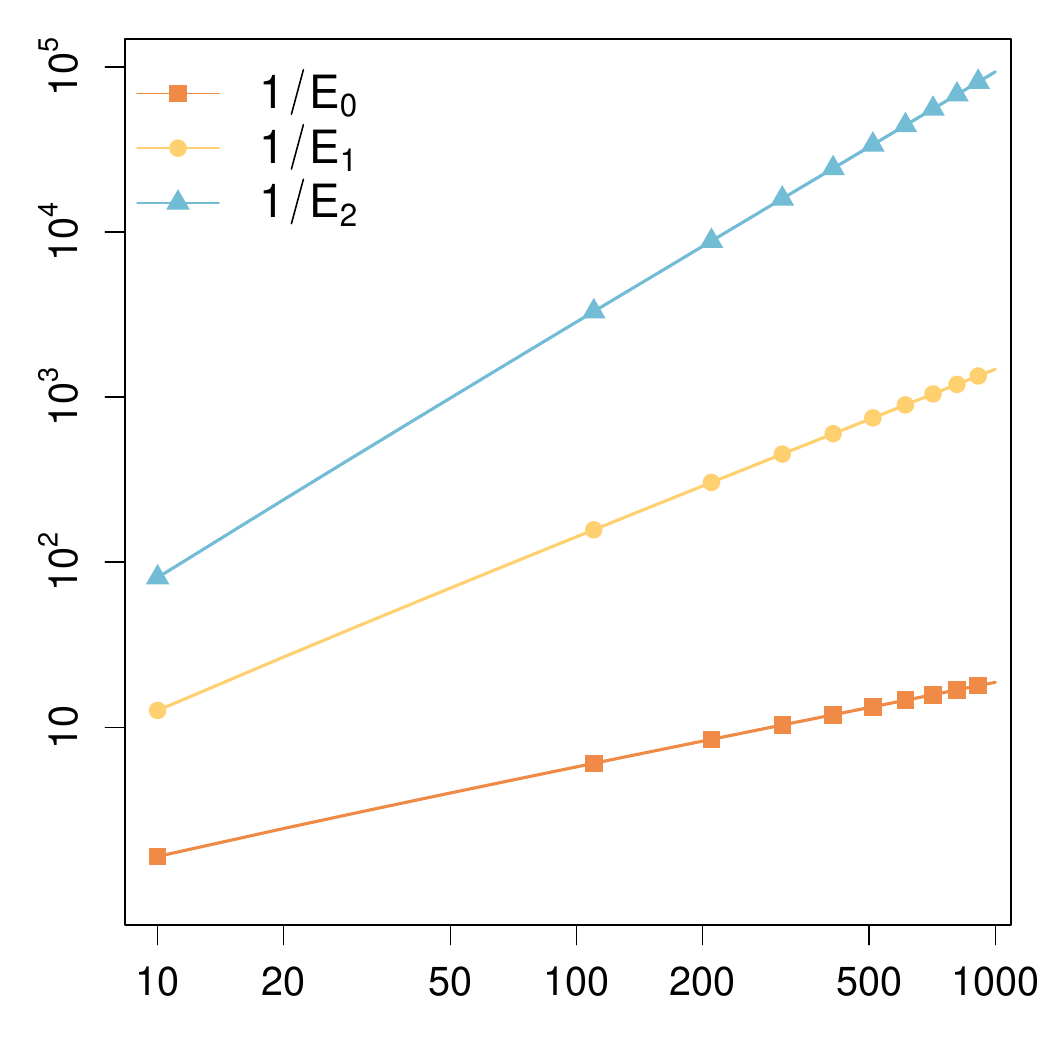}
\vspace{-0.5cm}
\caption{\footnotesize $\mat{\Omega}_0 = \left(\begin{smallmatrix} 2 & 1 \\ 1 & 2\end{smallmatrix}\right)$}
\end{subfigure}
\begin{subfigure}[b]{0.333\linewidth}
\centering
\includegraphics[width=\linewidth,height=\linewidth]{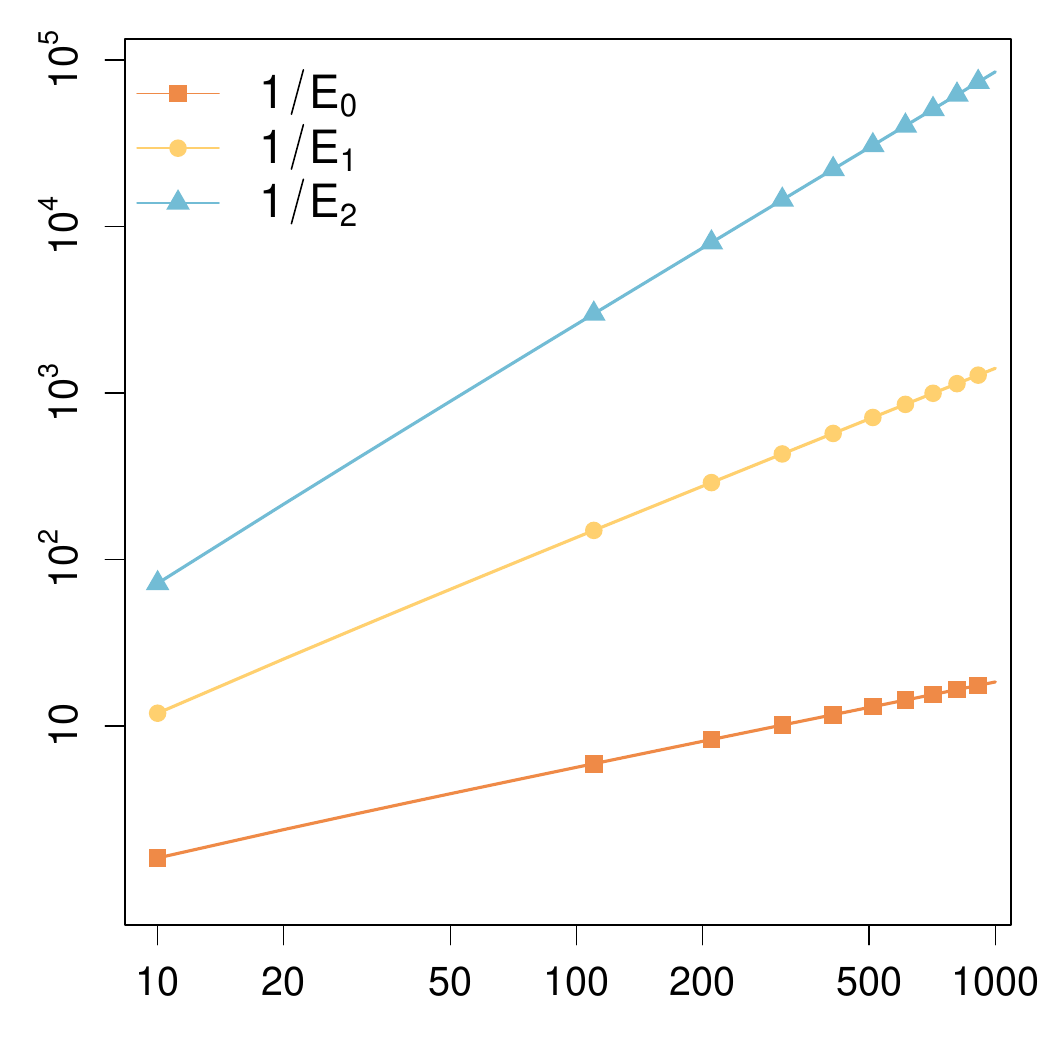}
\vspace{-0.5cm}
\caption{\footnotesize $\mat{\Omega}_0 = \left(\begin{smallmatrix} 2 & 1 \\ 1 & 3\end{smallmatrix}\right)$}
\end{subfigure}
\begin{subfigure}[b]{0.333\linewidth}
\centering
\includegraphics[width=\linewidth,height=\linewidth]{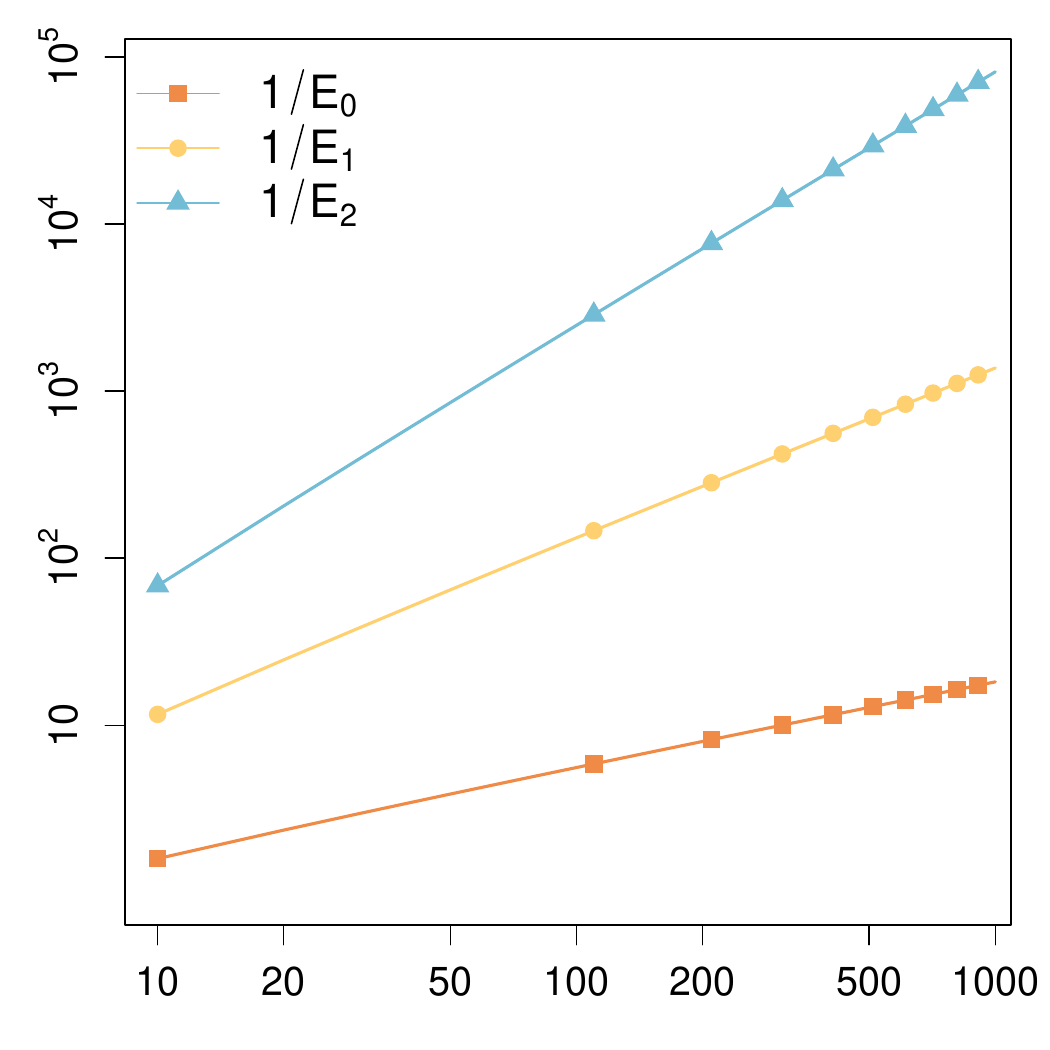}
\vspace{-0.5cm}
\caption{\footnotesize $\mat{\Omega}_0 = \left(\begin{smallmatrix} 2 & 1 \\ 1 & 4\end{smallmatrix}\right)$}
\end{subfigure}
\begin{subfigure}[b]{0.333\linewidth}
\centering
\includegraphics[width=\linewidth,height=\linewidth]{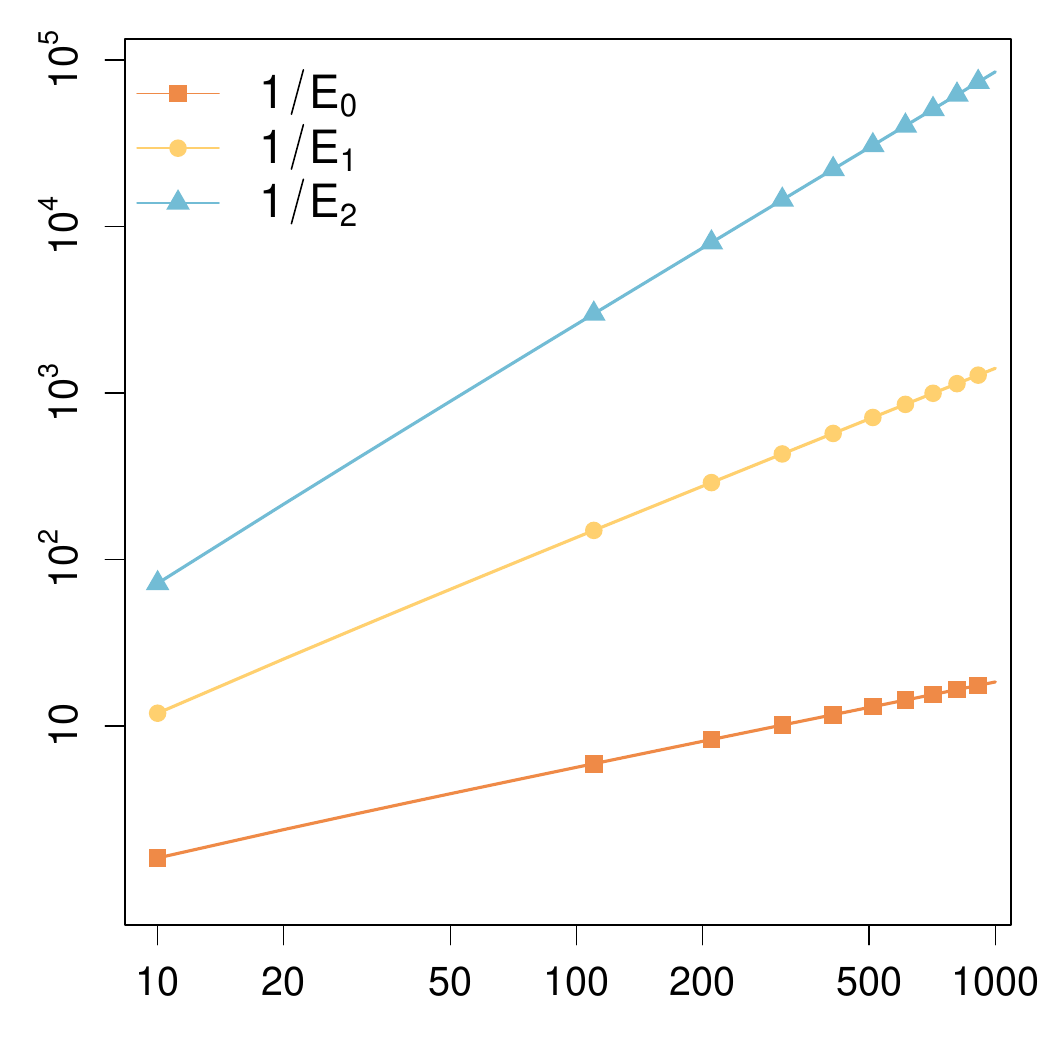}
\vspace{-0.5cm}
\caption{\footnotesize $\mat{\Omega}_0 = \left(\begin{smallmatrix} 3 & 1 \\ 1 & 2\end{smallmatrix}\right)$}
\end{subfigure}
\begin{subfigure}[b]{0.333\linewidth}
\centering
\includegraphics[width=\linewidth,height=\linewidth]{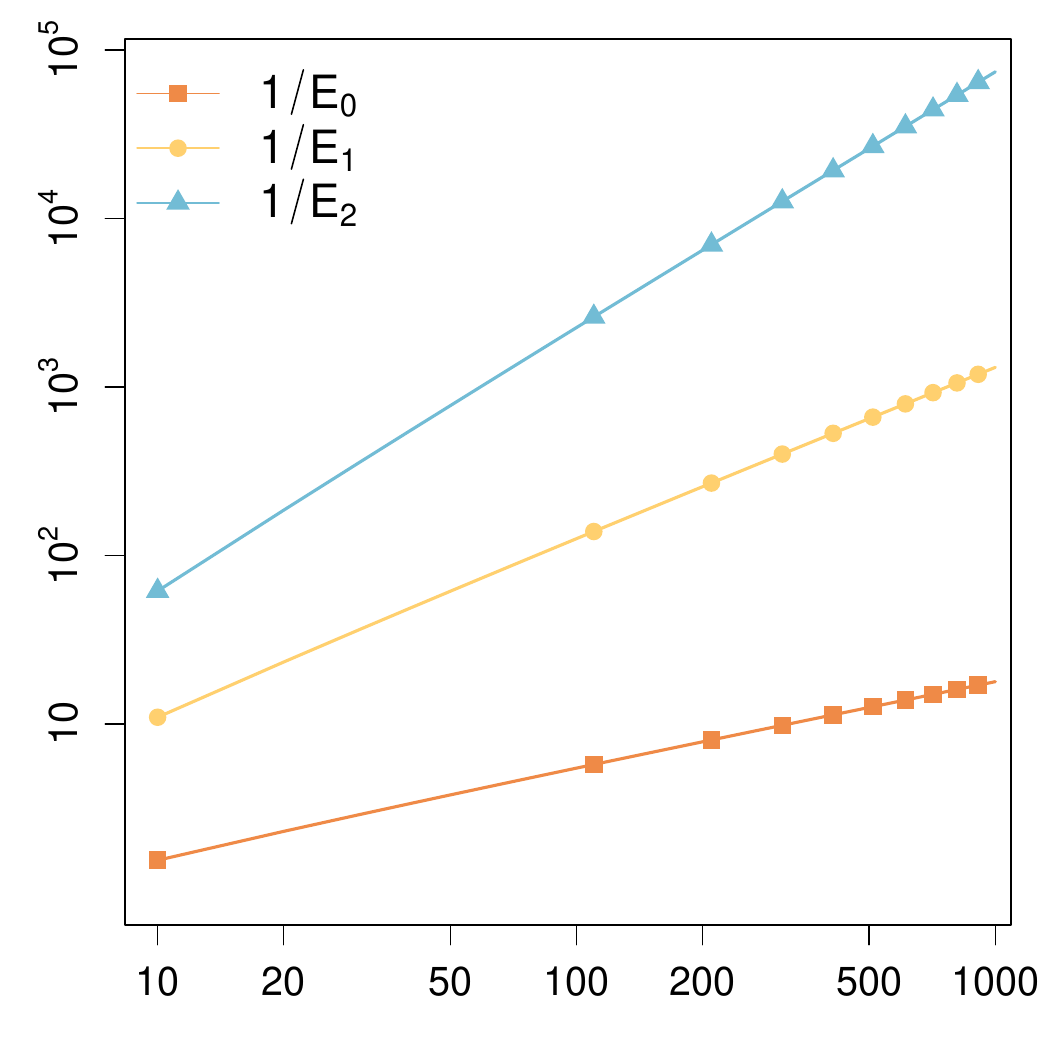}
\vspace{-0.5cm}
\caption{\footnotesize $\mat{\Omega}_0 = \left(\begin{smallmatrix} 3 & 1 \\ 1 & 3\end{smallmatrix}\right)$}
\end{subfigure}
\begin{subfigure}[b]{0.333\linewidth}
\centering
\includegraphics[width=\linewidth,height=\linewidth]{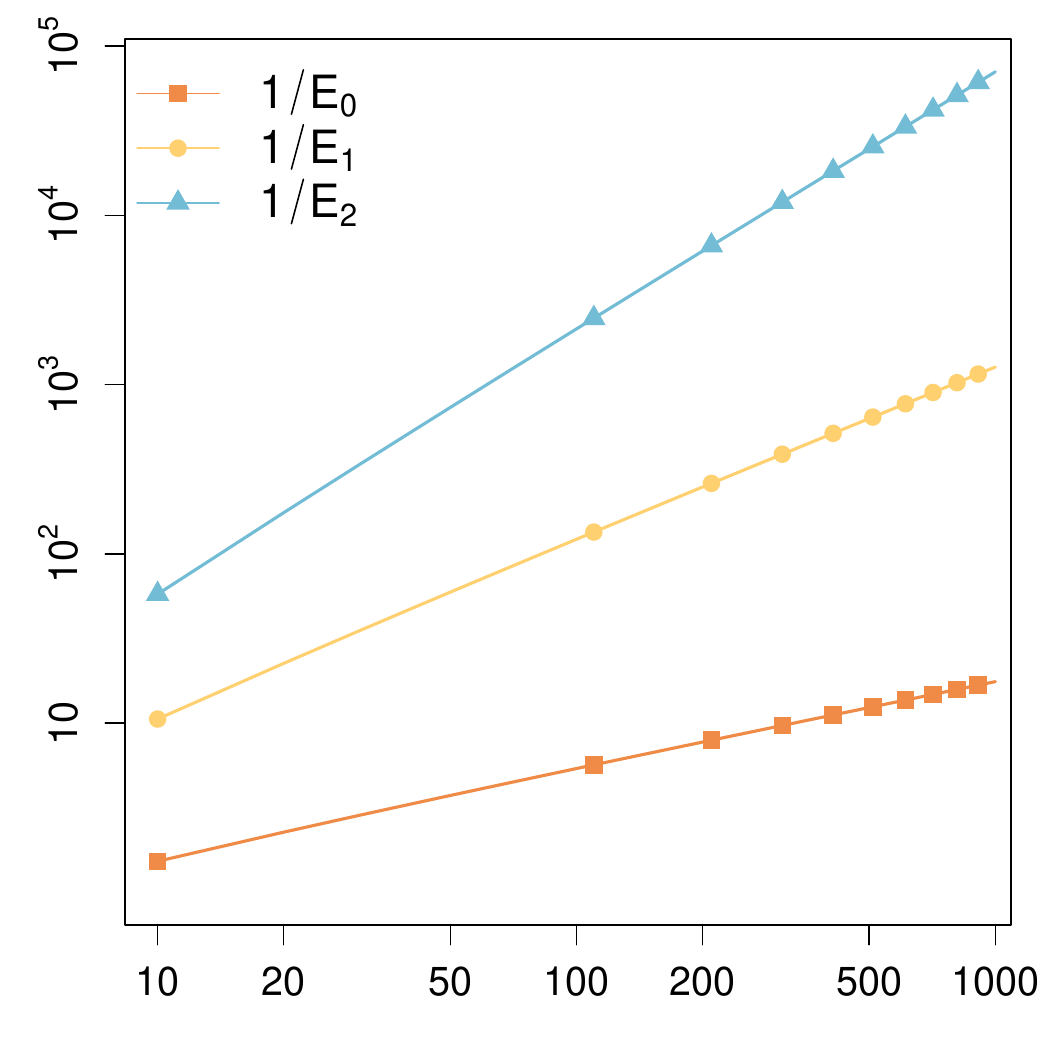}
\vspace{-0.5cm}
\caption{\footnotesize $\mat{\Omega}_0 = \left(\begin{smallmatrix} 3 & 1 \\ 1 & 4\end{smallmatrix}\right)$}
\end{subfigure}
\begin{subfigure}[b]{0.333\linewidth}
\centering
\includegraphics[width=\linewidth,height=\linewidth]{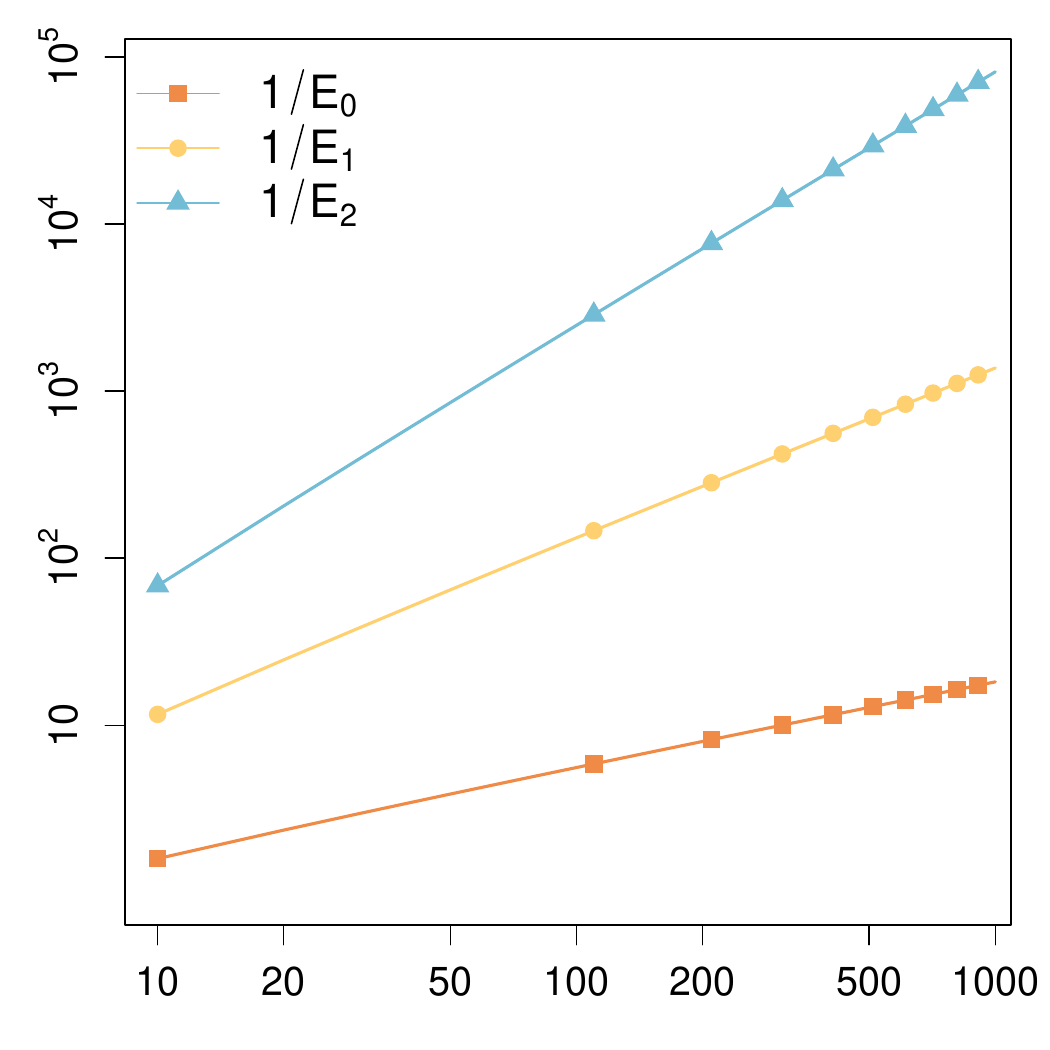}
\vspace{-0.5cm}
\caption{\footnotesize $\mat{\Omega}_0 = \left(\begin{smallmatrix} 4 & 1 \\ 1 & 2\end{smallmatrix}\right)$}
\end{subfigure}
\begin{subfigure}[b]{0.333\linewidth}
\centering
\includegraphics[width=\linewidth,height=\linewidth]{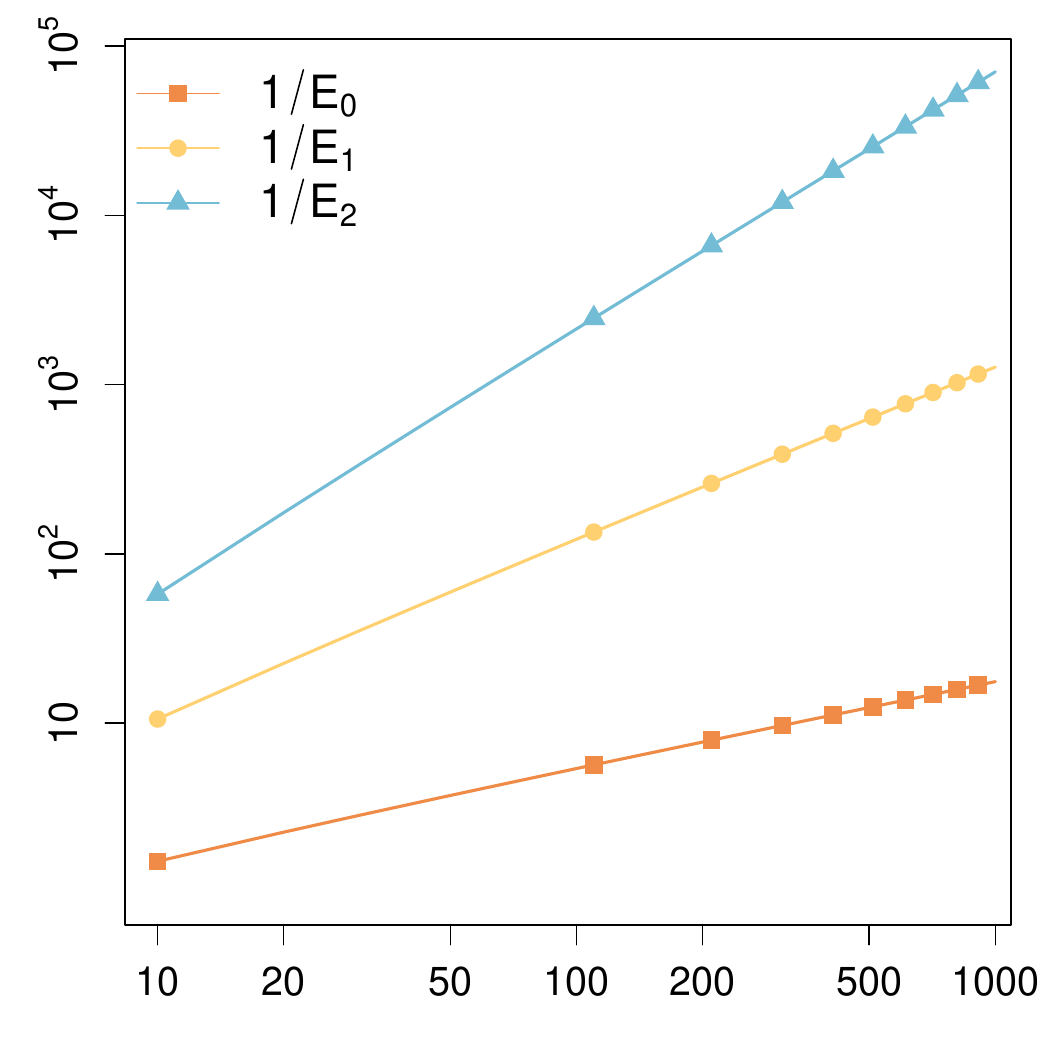}
\vspace{-0.5cm}
\caption{\footnotesize $\mat{\Omega}_0 = \left(\begin{smallmatrix} 4 & 1 \\ 1 & 3\end{smallmatrix}\right)$}
\end{subfigure}
\begin{subfigure}[b]{0.333\linewidth}
\centering
\includegraphics[width=\linewidth,height=\linewidth]{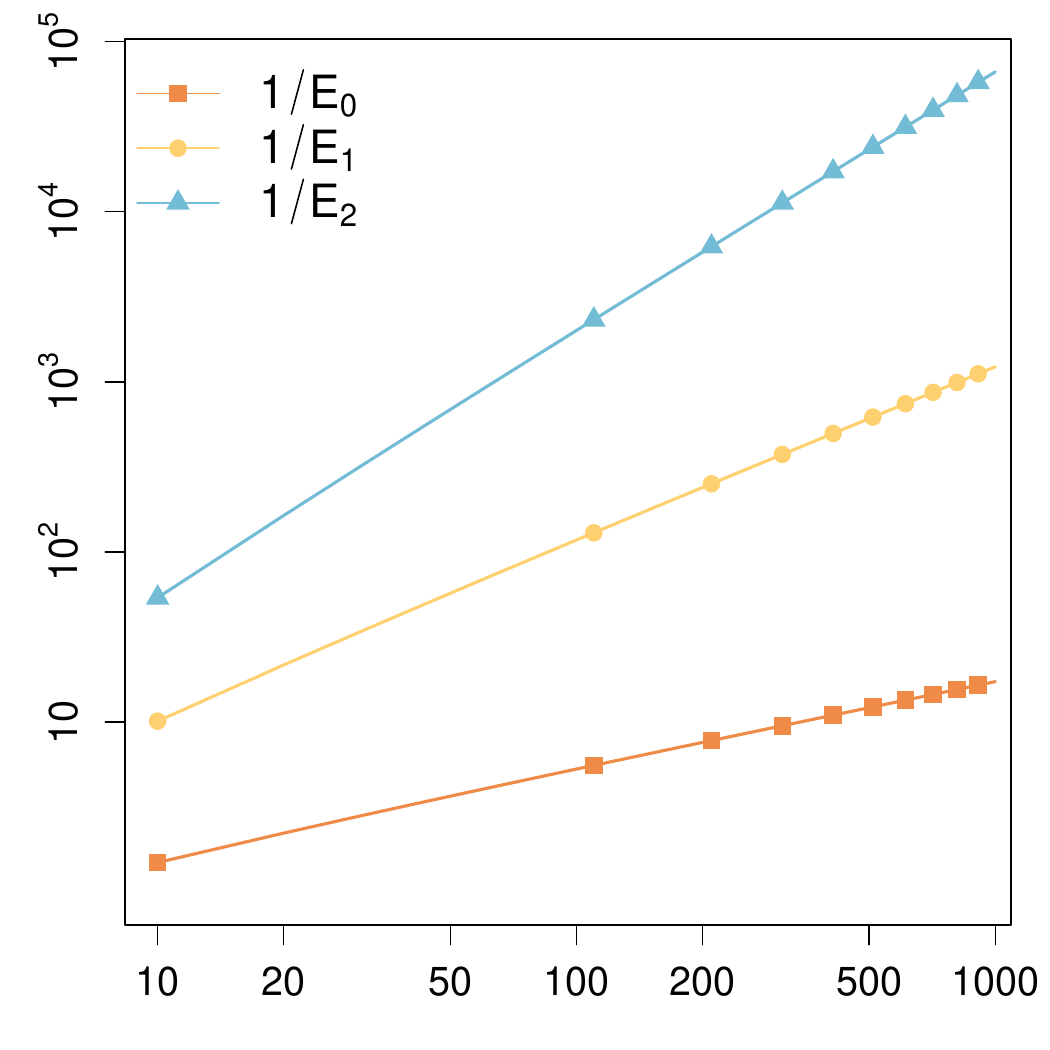}
\vspace{-0.5cm}
\caption{\footnotesize $\mat{\Omega}_0 = \left(\begin{smallmatrix} 4 & 1 \\ 1 & 4\end{smallmatrix}\right)$}
\end{subfigure}
\caption{The plots display $1/E_n$ as a function of $\mu$, for various choices of $\mat{\Omega}=\mat{\Omega}_0$, utilizing a logarithmic scale for both the horizontal and vertical axes. The plots illustrate the improvement achieved by incorporating correction terms from Theorem~\ref{thm:LLT.multivariate} to the base approximation.}\label{fig:figures/loglog.errors.plots}
\end{figure}

\begin{figure}[!htp]
\captionsetup[subfigure]{labelformat=empty}
\centering
\begin{subfigure}[b]{0.333\linewidth}
\centering
\includegraphics[width=\linewidth,height=\linewidth]{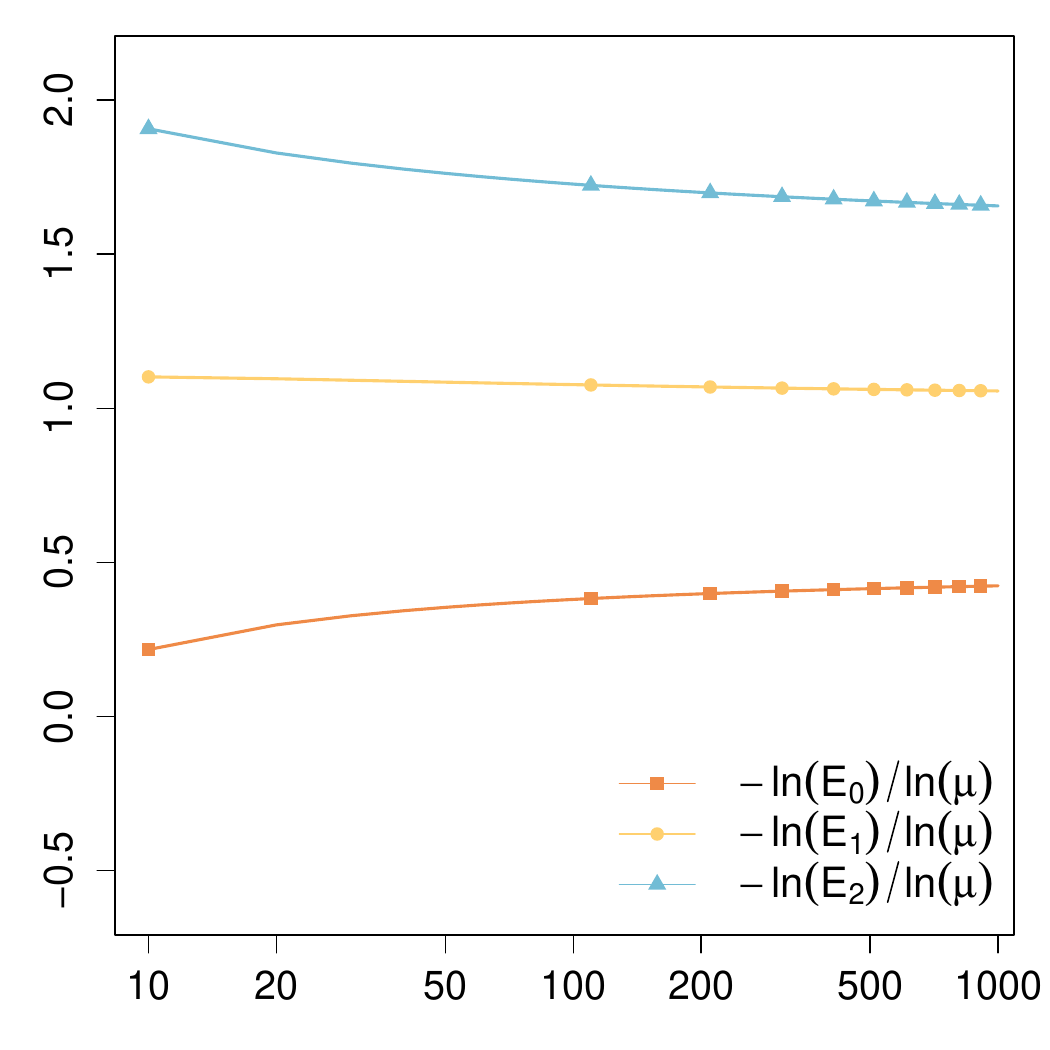}
\vspace{-0.5cm}
\caption{\footnotesize $\mat{\Omega}_0 = \left(\begin{smallmatrix} 2 & 1 \\ 1 & 2\end{smallmatrix}\right)$}
\end{subfigure}
\begin{subfigure}[b]{0.333\linewidth}
\centering
\includegraphics[width=\linewidth,height=\linewidth]{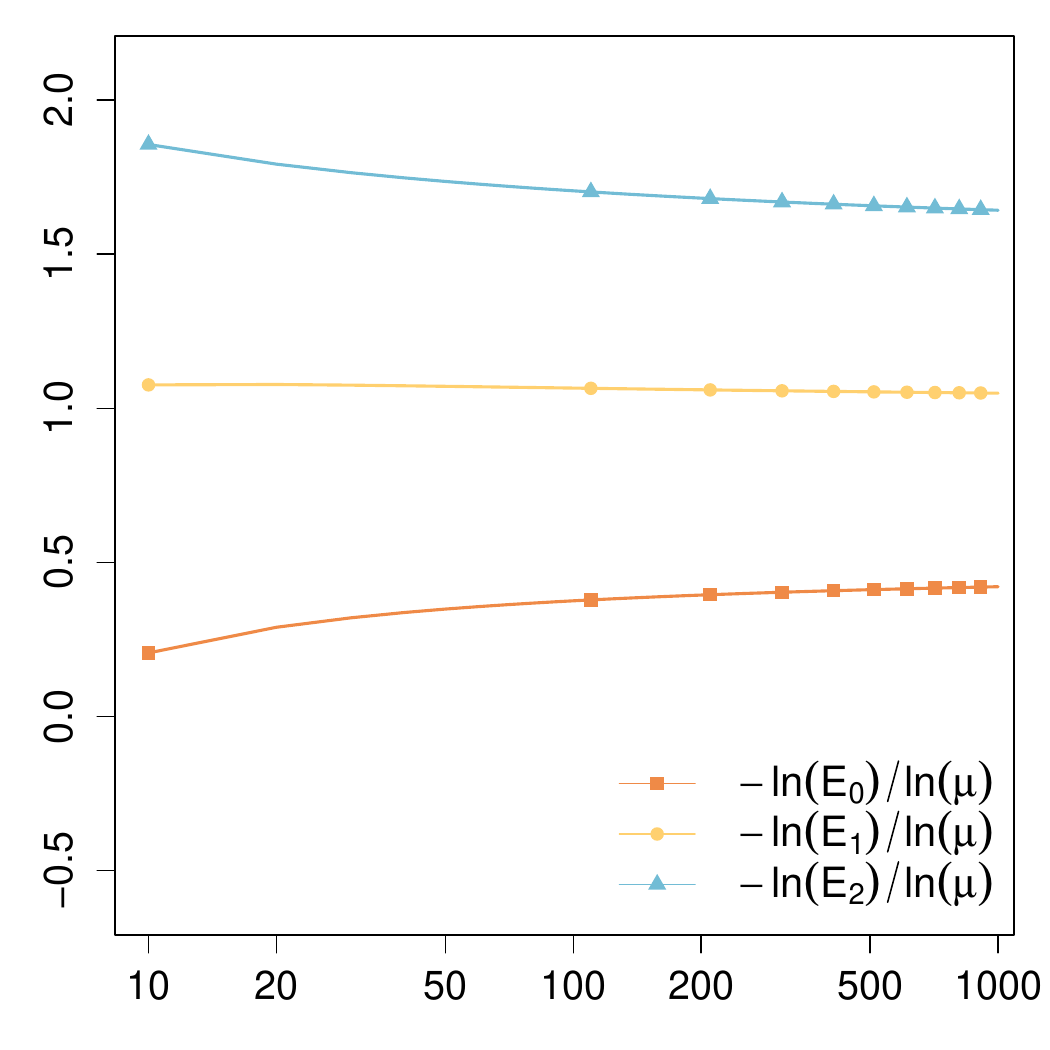}
\vspace{-0.5cm}
\caption{\footnotesize $\mat{\Omega}_0 = \left(\begin{smallmatrix} 2 & 1 \\ 1 & 3\end{smallmatrix}\right)$}
\end{subfigure}
\begin{subfigure}[b]{0.333\linewidth}
\centering
\includegraphics[width=\linewidth,height=\linewidth]{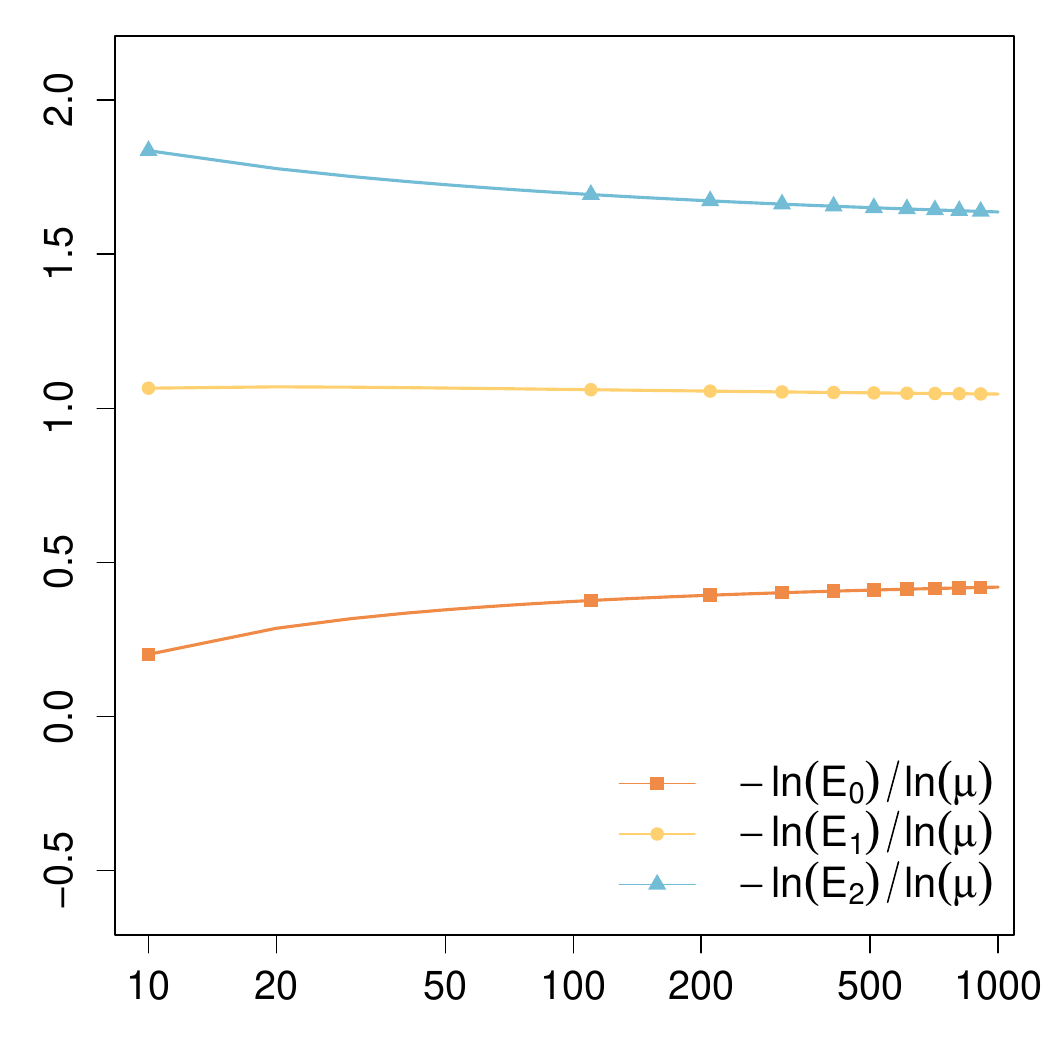}
\vspace{-0.5cm}
\caption{\footnotesize $\mat{\Omega}_0 = \left(\begin{smallmatrix} 2 & 1 \\ 1 & 4\end{smallmatrix}\right)$}
\end{subfigure}
\begin{subfigure}[b]{0.333\linewidth}
\centering
\includegraphics[width=\linewidth,height=\linewidth]{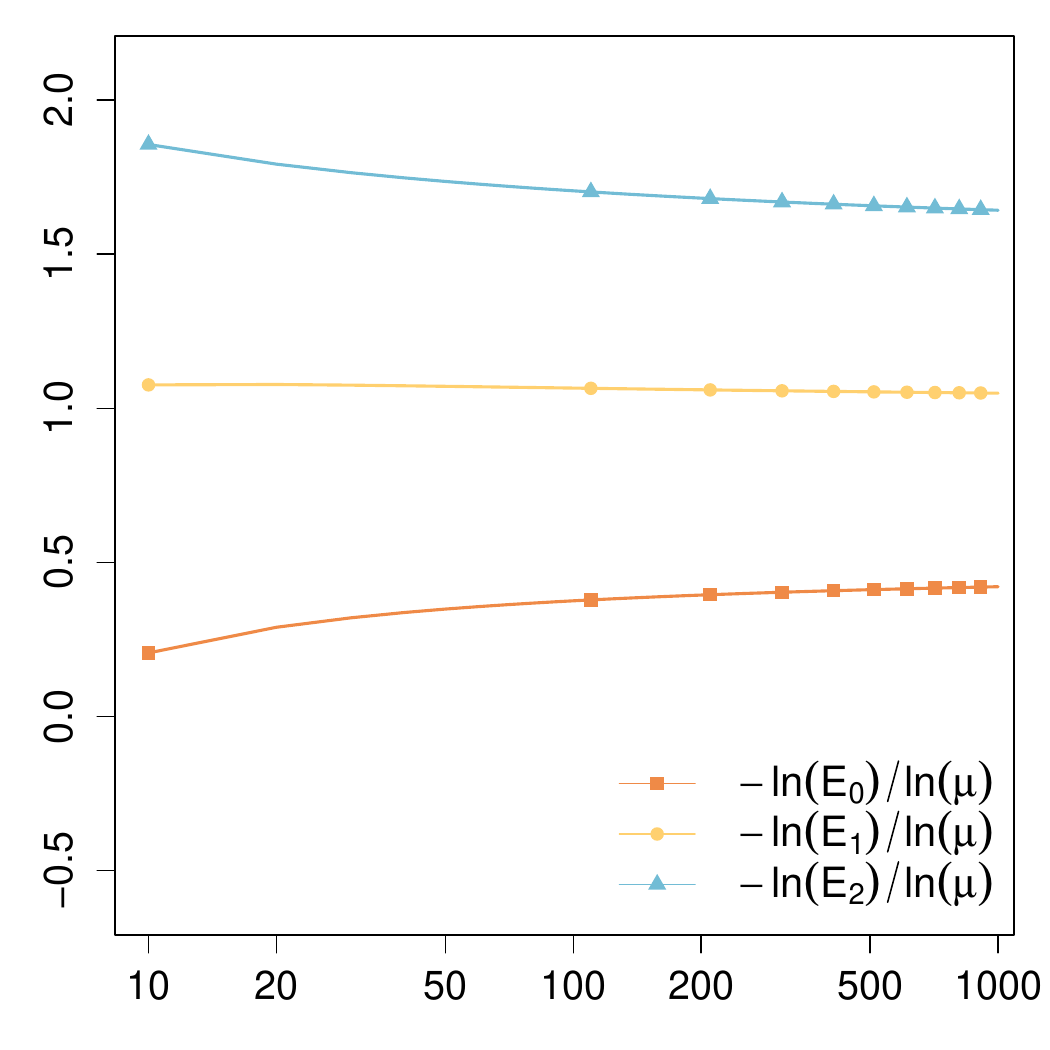}
\vspace{-0.5cm}
\caption{\footnotesize $\mat{\Omega}_0 = \left(\begin{smallmatrix} 3 & 1 \\ 1 & 2\end{smallmatrix}\right)$}
\end{subfigure}
\begin{subfigure}[b]{0.333\linewidth}
\centering
\includegraphics[width=\linewidth,height=\linewidth]{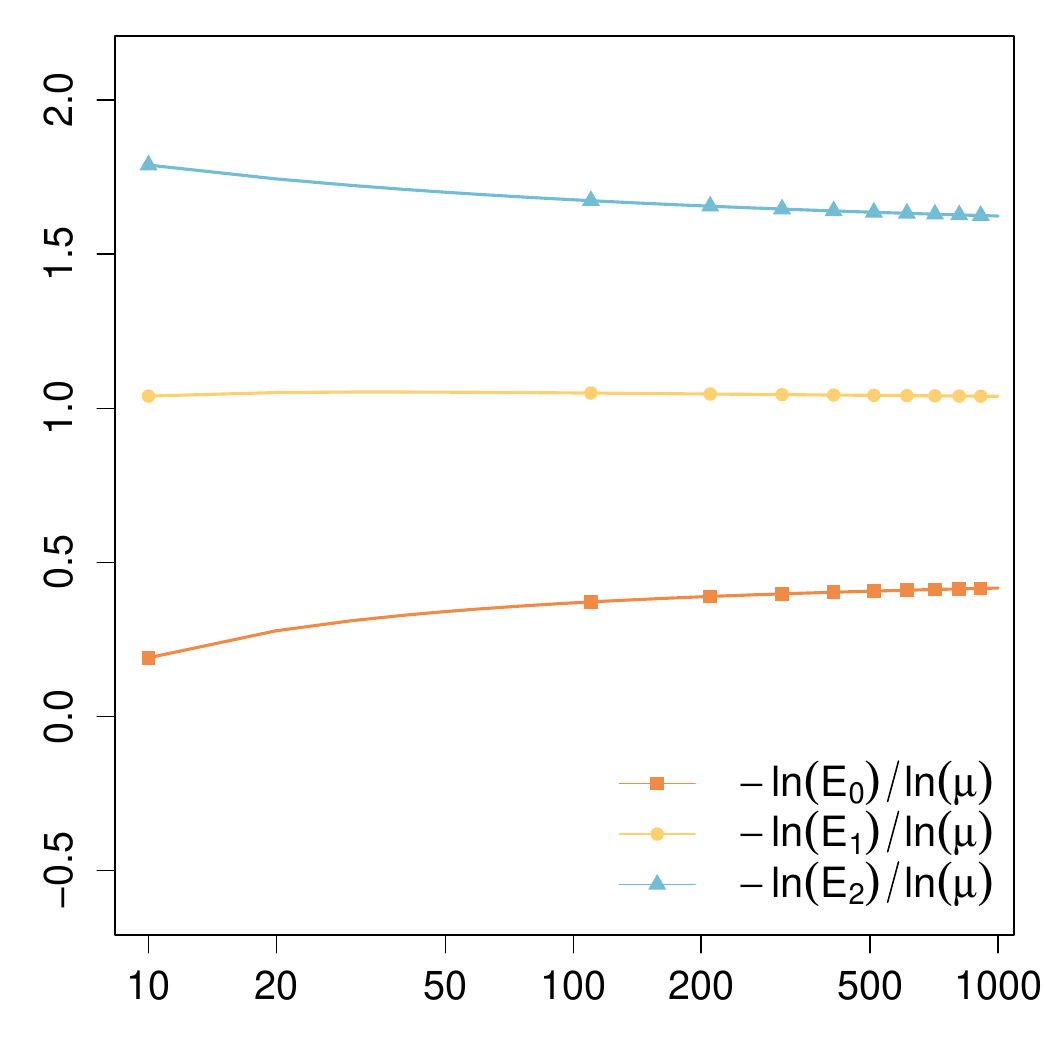}
\vspace{-0.5cm}
\caption{\footnotesize $\mat{\Omega}_0 = \left(\begin{smallmatrix} 3 & 1 \\ 1 & 3\end{smallmatrix}\right)$}
\end{subfigure}
\begin{subfigure}[b]{0.333\linewidth}
\centering
\includegraphics[width=\linewidth,height=\linewidth]{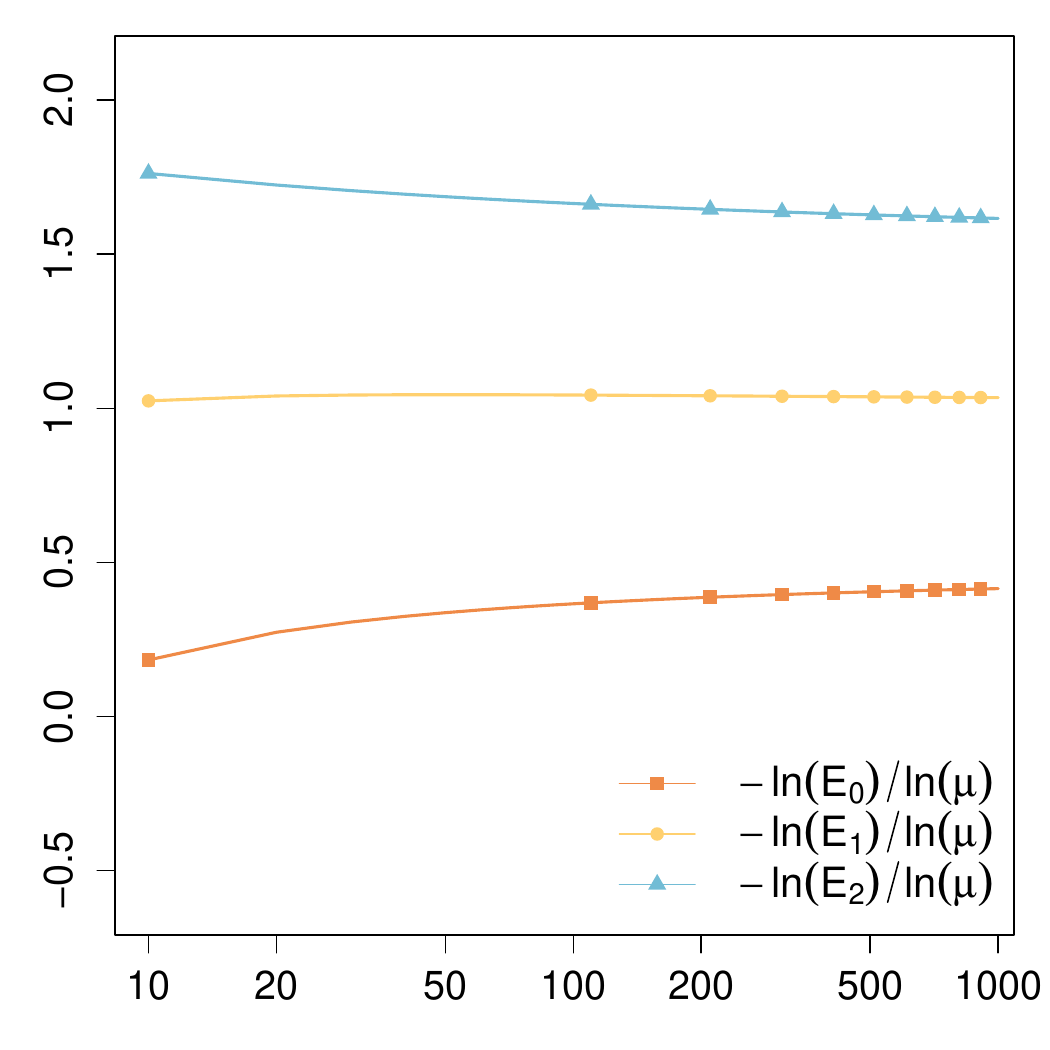}
\vspace{-0.5cm}
\caption{\footnotesize $\mat{\Omega}_0 = \left(\begin{smallmatrix} 3 & 1 \\ 1 & 4\end{smallmatrix}\right)$}
\end{subfigure}
\begin{subfigure}[b]{0.333\linewidth}
\centering
\includegraphics[width=\linewidth,height=\linewidth]{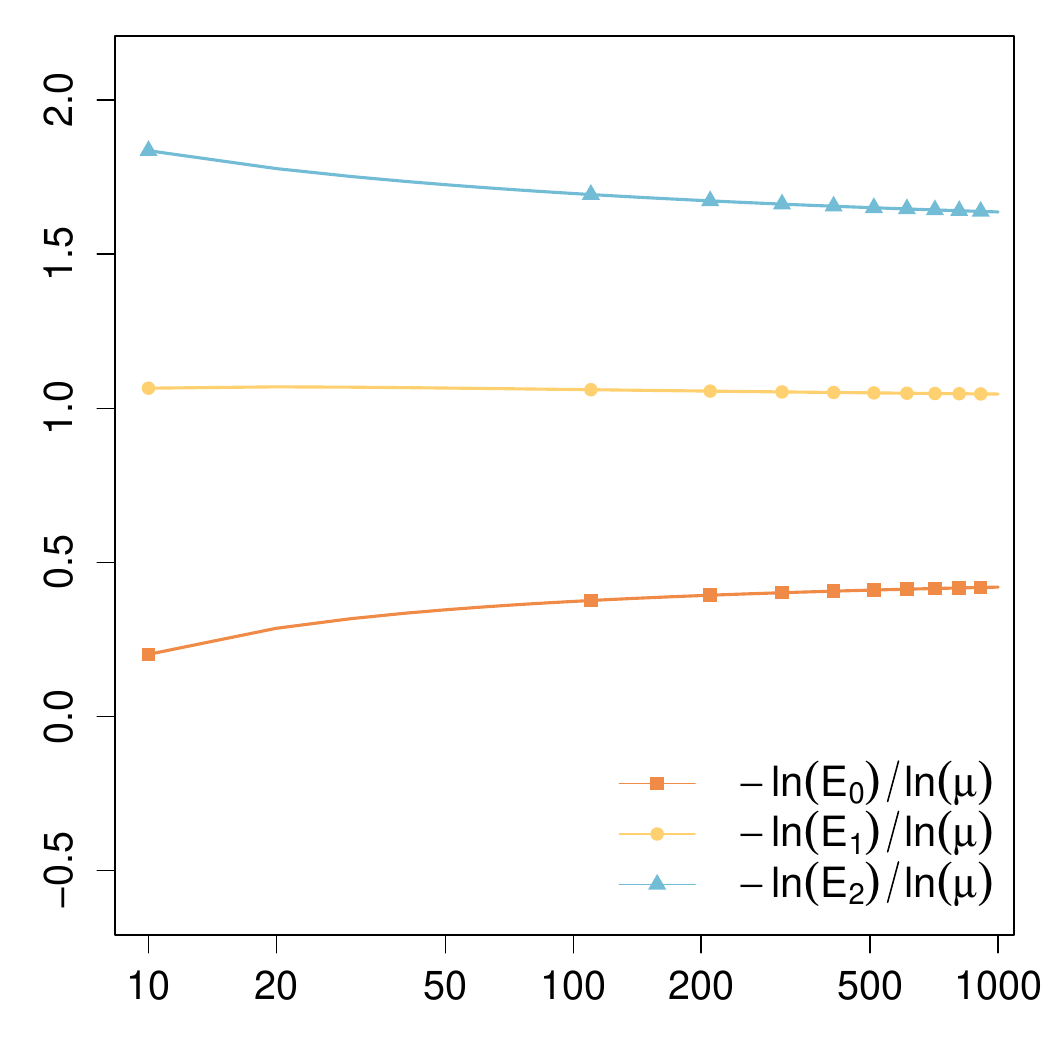}
\vspace{-0.5cm}
\caption{\footnotesize $\mat{\Omega}_0 = \left(\begin{smallmatrix} 4 & 1 \\ 1 & 2\end{smallmatrix}\right)$}
\end{subfigure}
\begin{subfigure}[b]{0.333\linewidth}
\centering
\includegraphics[width=\linewidth,height=\linewidth]{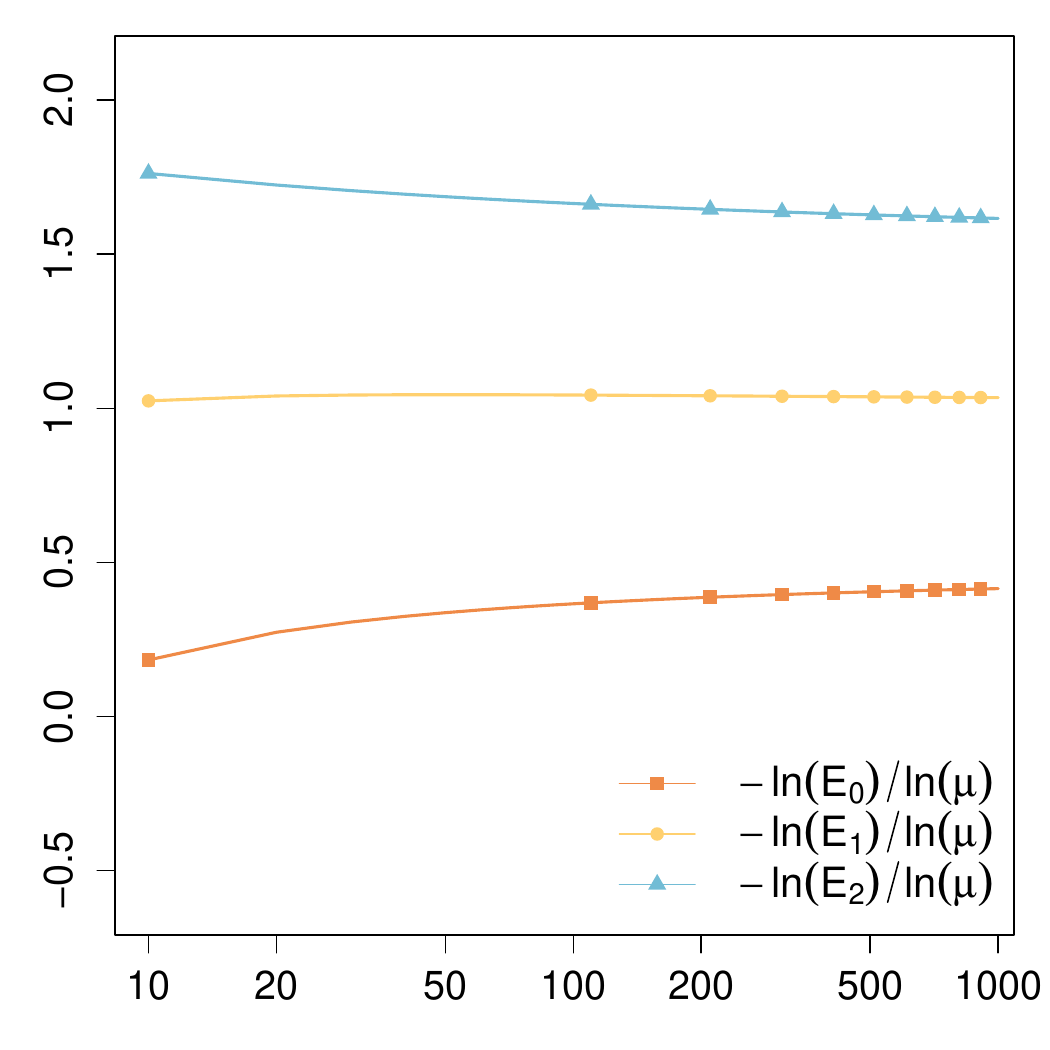}
\vspace{-0.5cm}
\caption{\footnotesize $\mat{\Omega}_0 = \left(\begin{smallmatrix} 4 & 1 \\ 1 & 3\end{smallmatrix}\right)$}
\end{subfigure}
\begin{subfigure}[b]{0.333\linewidth}
\centering
\includegraphics[width=\linewidth,height=\linewidth]{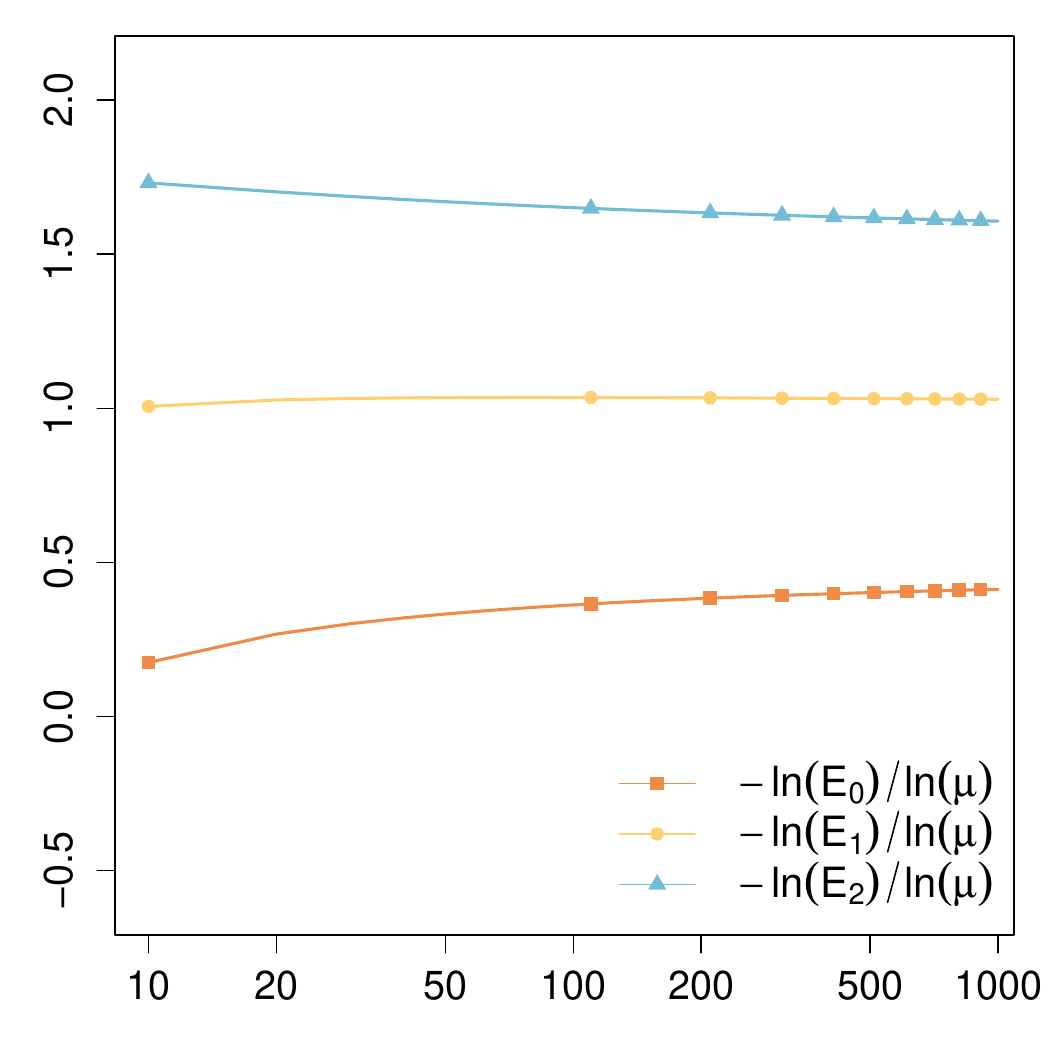}
\vspace{-0.5cm}
\caption{\footnotesize $\mat{\Omega}_0 = \left(\begin{smallmatrix} 4 & 1 \\ 1 & 4\end{smallmatrix}\right)$}
\end{subfigure}
\caption{The plots display $-\ln(E_n)/\ln(\mu)$ as a function of $\mu$, with the horizontal axis logarithmically scaled. The plots confirm the asymptotic orders of the liminfs in~\eqref{eq:liminf.exponent.bound} and provide compelling numerical evidence for the validity of Theorem~\ref{thm:LLT.multivariate}.}\label{fig:error.exponents.plots}
\end{figure}

By applying the local approximations of Theorem~\ref{thm:LLT.multivariate} in the bulk and by showing that the contributions outside the bulk are negligible using concentration inequalities, probability metric bounds between the measures induced by the densities $k_{\bb{\beta},\mu \bb{\xi}_0, \omega\mat{\Omega}_0}$ and $\phi_{\mu \bb{\xi}_0,\mu \omega\bb{\beta}^{\top} \bb{\xi}_0 \mat{\Omega}_0}$ are established. The following is a global result.

\newpage
\begin{theorem}[Probability metric bounds]\label{thm:total.variation.multivariate}
Let $\smash{\PP_{\mu, \omega, \bb{\beta}, \bb{\xi}_0, \mat{\Omega}_0}}$ be the measure on $\R^d$ induced by the MIG density $k_{\bb{\beta},\mu \bb{\xi}_0, \omega\mat{\Omega}_0}$ in~\eqref{eq:general.multivariate.density}. Let $\smash{\QQ_{\mu, \omega,\bb{\beta}, \bb{\xi}_0, \mat{\Omega}_0}}$ be the measure on $\R^d$ induced by multivariate Gaussian density $\smash{\phi_{\mu \bb{\xi}_0,\mu \omega \bb{\beta}^{\top} \bb{\xi}_0 \mat{\Omega}_0}}$ in~\eqref{eq:normal.distribution.multivariate}. Then, for any positive reals $\mu,\omega\in (0,\infty)$, one has
\begin{equation}\label{eq:bound.Hellinger.multivariate}
\mathfrak{H}(\PP_{\mu, \omega, \bb{\beta}, \bb{\xi}_0, \mat{\Omega}_0},\QQ_{\mu, \omega,\bb{\beta}, \bb{\xi}_0, \mat{\Omega}_0}) \leq C \sqrt{{\omega}/{\mu}},
\end{equation}
where $C = C(d,\bb{\beta}, \bb{\xi}_0, \mat{\Omega}_0)\in (0,\infty)$ is a positive constant that may depend on $d, \bb{\beta}$, $\bb{\xi}_0$ and $\mat{\Omega}_0$. The bound~\eqref{eq:bound.Hellinger.multivariate} is also valid if one replaces the Hellinger distance $\mathfrak{H}$ by any of the following probability metrics: discrepancy metric, Kolmogorov (or uniform) metric, L\'evy metric, Prokhorov metric, total variation.
\end{theorem}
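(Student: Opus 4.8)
The plan is to establish the Hellinger bound~\eqref{eq:bound.Hellinger.multivariate} and then deduce the other five inequalities from it: each of the discrepancy, Kolmogorov, L\'evy, and Prokhorov metrics, as well as the total variation distance, is bounded above by a universal-constant multiple of the total variation distance, which is itself at most $\sqrt{2}$ times the Hellinger distance, so the whole statement follows from the Hellinger case. The argument will mirror the (deferred) proof of the univariate Theorem~\ref{thm:total.variation}, with Theorem~\ref{thm:LLT.multivariate} playing the role of the univariate local limit theorem: I will split $\R^d$ into a bulk region, where the log-ratio expansion~\eqref{eq:LLT.multivariate.1} is used, and its complement, which is handled by a finite-moment bound and Gaussian tail estimates. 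Concretely, write $\PP$ and $\QQ$ for the two measures in the statement, with densities $p = k_{\bb{\beta},\mu \bb{\xi}_0, \omega\mat{\Omega}_0}$ (extended by $0$ outside $\mathcal{H}_d(\bb{\beta})$) and $q = \phi_{\mu \bb{\xi}_0,\mu \omega \bb{\beta}^{\top} \bb{\xi}_0 \mat{\Omega}_0}$, so that $H^2(\PP,\QQ) = \tfrac12 \int_{\R^d} (\sqrt{p(\bb{x})} - \sqrt{q(\bb{x})})^2 \rd\bb{x}$ and, on $\mathcal{H}_d(\bb{\beta})$, $(\sqrt{p} - \sqrt{q})^2 = q \, (e^{LR/2} - 1)^2$ with $LR$ as in~\eqref{eq:LLT.multivariate.1}. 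I would fix the cutoff $R = (\mu/\omega)^{1/8}$ and set $\mathcal{B} = \{\bb{x} \in \mathcal{H}_d(\bb{\beta}) : \|\bb{\delta}_{\bb{x}}\|_2 \le R\}$; since $|\bb{\beta}^{\top} \mat{\Omega}_0^{1/2} \bb{\delta}_{\bb{x}}| \le \|\mat{\Omega}_0^{1/2}\bb{\beta}\|_2\,\|\bb{\delta}_{\bb{x}}\|_2$ by Cauchy--Schwarz, we have $\mathcal{B} \subseteq \overline{B_{\mu,\omega,\bb{\beta},\bb{\xi}_0,\mat{\Omega}_0}}(\tau)$ with $\tau = R\,\|\mat{\Omega}_0^{1/2}\bb{\beta}\|_2/(\bb{\beta}^{\top} \bb{\xi}_0)^{1/2} = \oo(\sqrt{\mu/\omega})$, so Theorem~\ref{thm:LLT.multivariate}, in particular the series~\eqref{eq:LLT.multivariate.1}, applies on $\mathcal{B}$ once $\mu/\omega$ is large.

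For the bulk estimate, write $z_{\bb{x}} = \bb{\beta}^{\top} \mat{\Omega}_0^{1/2} \bb{\delta}_{\bb{x}}\,(\omega/\mu)^{1/2}/(\bb{\beta}^{\top} \bb{\xi}_0)^{1/2}$. Summing~\eqref{eq:LLT.multivariate.1} in absolute value gives, whenever $|z_{\bb{x}}| \le \tfrac12$, the pointwise bound $|LR(\bb{x})| \le (d + 2 + \|\bb{\delta}_{\bb{x}}\|_2^2)\,|z_{\bb{x}}|$; since $|z_{\bb{x}}| = \OO((\omega/\mu)^{3/8})$ on $\mathcal{B}$, it follows that $\sup_{\mathcal{B}} |LR| \to 0$ as $\mu/\omega \to \infty$. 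Hence, by the elementary inequality $(e^{u}-1)^2 \le C_0 u^2$ valid for $|u|$ bounded, $(e^{LR/2} - 1)^2 \le C_0 \, LR^2 \le C_0 \, (d + 2 + \|\bb{\delta}_{\bb{x}}\|_2^2)^2 \, (\bb{\beta}^{\top} \mat{\Omega}_0^{1/2} \bb{\delta}_{\bb{x}})^2 \, (\omega/\mu)/(\bb{\beta}^{\top} \bb{\xi}_0)$ on $\mathcal{B}$. Because $\bb{\delta}_{\bb{X}}$ is exactly $\mathcal{N}_d(\bb{0}_d, \mat{I}_d)$-distributed under $\QQ$, bounding $\int_{\mathcal{B}} q\,(e^{LR/2}-1)^2$ by $C_0(\omega/\mu)(\bb{\beta}^{\top}\bb{\xi}_0)^{-1}\int_{\R^d} q\,(d+2+\|\bb{\delta}_{\bb{x}}\|_2^2)^2(\bb{\beta}^{\top}\mat{\Omega}_0^{1/2}\bb{\delta}_{\bb{x}})^2\,\rd\bb{x}$ (the polynomial integrand being nonnegative, the domain may be enlarged) and evaluating the resulting finite Gaussian moment yields $\tfrac12 \int_{\mathcal{B}} q\, (e^{LR/2} - 1)^2 \le C_1(d,\bb{\beta},\bb{\xi}_0,\mat{\Omega}_0)\, \omega/\mu$.

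For the complement, $\tfrac12 \int_{\R^d \setminus \mathcal{B}} (\sqrt{p} - \sqrt{q})^2 \le \PP(\|\bb{\delta}_{\bb{X}}\|_2 > R) + \QQ(\R^d \setminus \mathcal{B})$. A Markov bound with the eighth moment gives $\PP(\|\bb{\delta}_{\bb{X}}\|_2 > R) \le R^{-8}\,\EE_{\PP}[\|\bb{\delta}_{\bb{X}}\|_2^8] = \OO(\omega/\mu)$, because the cumulants of $\bb{\delta}_{\bb{X}}$ of order $\ge 3$ all tend to $0$ as $\mu/\omega \to \infty$ (from the MIG cumulant generating function, Theorem~1 of~\cite{MR2019130}), so that $\EE_{\PP}[\|\bb{\delta}_{\bb{X}}\|_2^8]$ converges to the standard normal eighth moment and stays bounded for $\mu/\omega$ large; and $\QQ(\R^d \setminus \mathcal{B}) \le \QQ(\|\bb{\delta}_{\bb{X}}\|_2 > R) + \QQ(\bb{\beta}^{\top} \bb{X} \le 0)$, where both terms are $\oo(\omega/\mu)$ by Gaussian tail estimates, the second equalling $\Phi(-c\sqrt{\mu/\omega})$ with $c = \{(\bb{\beta}^{\top} \bb{\xi}_0)/(\bb{\beta}^{\top} \mat{\Omega}_0 \bb{\beta})\}^{1/2} > 0$. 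Combining the two regions, $H^2(\PP,\QQ) \le C_2\,\omega/\mu$ once $\mu/\omega$ exceeds a threshold depending only on $d,\bb{\beta},\bb{\xi}_0,\mat{\Omega}_0$, while for smaller $\mu/\omega$ the trivial bound $H^2 \le 1$ suffices with a larger constant; this proves~\eqref{eq:bound.Hellinger.multivariate}, and the five other metrics follow by the comparison inequalities mentioned above.

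The delicate point is the calibration of the cutoff $R$. It must be small enough that $\sup_{\mathcal{B}}|LR| \to 0$ — this is exactly what makes $(e^{u}-1)^2 \le C_0 u^2$ available on $\mathcal{B}$, so that one can work with $(e^{LR/2}-1)^2$ directly and avoid any delicate cancellation of the linear term of $LR$ against $q$ — yet large enough that $\PP(\mathcal{B}^c)$, which is controlled only through a finite moment of $\bb{\delta}_{\bb{X}}$, is of order $\omega/\mu$; the choice $R = (\mu/\omega)^{1/8}$ paired with the eighth moment lies in this window (any exponent in $(0,1/6)$ with a sufficiently high moment would work). The remaining ingredients — absolute convergence of the log-ratio series on $\mathcal{B}$, the explicit Gaussian moment computation, and the uniform boundedness of $\EE_{\PP}[\|\bb{\delta}_{\bb{X}}\|_2^8]$ obtained from the MIG cumulants — are routine.
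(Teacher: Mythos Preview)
Your proof is correct but follows a genuinely different route from the paper's. The paper uses the Carter-type inequality $H^2(\PP,\QQ) \le 2\,\PP(B^{\mathrm c}) + \EE_{\PP}[LR(\bb{X})\,\ind_B(\bb{X})]$ with the \emph{large} bulk $B = \overline{B_{\mu,\omega,\bb{\beta},\bb{\xi}_0,\mat{\Omega}_0}}(0.5\sqrt{\mu/\omega})$, so that the tail $\PP(B^{\mathrm c})$ is controlled by Chebyshev and a union bound; the price is that $LR$ is only $\OO(\sqrt{\omega/\mu})$ pointwise on $B$, and one must exploit the cancellation $\EE_{\PP}[\bb{\delta}_{\bb{X}}]=\bb{0}_d$ together with the third-moment estimate $\EE_{\PP}[\delta_{X,i}\delta_{X,j}\delta_{X,k}] = \OO(\sqrt{\omega/\mu})$ from Minami's Property~1 to squeeze the bulk contribution down to $\OO(\omega/\mu)$. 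You instead work directly with the identity $(\sqrt{p}-\sqrt{q})^2 = q\,(e^{LR/2}-1)^2$, integrate against the Gaussian $q$, and shrink the bulk to $\{\|\bb{\delta}_{\bb{x}}\|_2 \le (\mu/\omega)^{1/8}\}$ so that $\sup_{\mathcal B}|LR|\to 0$ and the quadratic bound $(e^{u}-1)^2 \le C_0 u^2$ applies; this gives $LR^2 = \OO(\omega/\mu)\times(\text{polynomial in }\bb{\delta}_{\bb{x}})$ with no cancellation needed, and the polynomial integrates to a finite Gaussian moment. The trade-off is that your smaller bulk forces an eighth-moment Markov bound on the $\PP$-tail, where the paper gets away with the second moment. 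Both arguments draw on Minami's cumulant/moment formulas, just at different orders; your version is slightly more self-contained in that it bypasses the first-order centering, while the paper's is lighter on the tail side.
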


\section{MIG kernel density estimator on half-spaces}\label{sec:MIG.kernel.density estimator}

Let $\bb{X}_1,\ldots,\bb{X}_n$ be a random sample from an unknown target density function~$f$ supported on the $d$-dimensional half-space $\mathcal{H}(\bb{\beta})$.

For a given positive definite bandwidth matrix $\mat{H}\in \mathcal{S}_{++}^d$, the MIG kernel density estimator for $f$ is defined, for any $\bb{\xi}\in \mathcal{H}(\bb{\beta})$, by
\begin{equation}\label{eq:MIG.estimator}
\hat{f}_{n,\mat{H}}(\bb{\xi}) = \frac{1}{n} \sum_{i=1}^n k_{\bb{\beta},\bb{\xi}, \mat{H}}(\bb{X}_i).
\end{equation}
If the observations were supported on the translated half-space $\bb{a} + \mathcal{H}(\bb{\beta})$ for some vector $\bb{a}\in \R^d$, then one would replace each vector $\bb{X}_i$ by the translated vector $\bb{X}_i - \bb{a}$ in~\eqref{eq:MIG.estimator}. The bandwidth matrix $\mat{H}$ here is the `scale' parameter $\mat{\Omega}$ of the MIG distribution but is denoted $\mat{H}$ to conform with the convention in kernel density estimation; see, e.g., \citet[Section~2.2]{MR3822372}.

The density estimator $\smash{\hat{f}_{n,\mat{H}}}$ is the first and only example in the literature of an asymmetric kernel density estimator tailored for general half-spaces in dimension $d\geq 2$. For the one-dimensional analog on the half-line $(0,\infty)$, see, e.g., \cite{MR2053071}, \cite{MR2179543}, \cite{MR2229885}, \cite{MR3131281}, \cite{MR4136585} and references therein. Two competing estimators are proposed in Section~\ref{subsec:simulation.competing.estimators}.

As mentioned in Section~\ref{sec:density.estimation.half.spaces}, asymmetric kernel estimators possess several desirable qualities. They achieve asymptotically negligible boundary bias, as shown by Proposition~\ref{prop:asymptotic.bias} below, and avoid spill-over near the boundary by locally adjusting the kernel's shape to the support's geometry. This inherent trait distinguishes them from the reflection method or boundary kernels, making them exceptionally user-friendly and among the simplest estimators in the class of methods that achieve asymptotic unbiasedness near the boundary. Moreover, they are nonnegative across the entire support of the target density, which sets them apart from many boundary bias-corrected estimators. For an overview of the literature on asymmetric kernel estimators, refer to \citet{MR3821525} and Section~2 of \citet{MR4319409}. For a review of associated kernels, which unify part of the theory for asymmetric kernels, see, e.g., \citet{MR3760293} and \cite{Kokonendji_Some_2021}.

The asymptotic properties of the new MIG kernel density estimator $\smash{\hat{f}_{n,\mat{H}}}$ are stated in Section~\ref{sec:asymptotics}. Its finite-sample performance, along with that of two competing estimators, is then examined by simulation in Section~\ref{sec:simulation.study}. To demonstrate practical utility, a bivariate version of $\smash{\hat{f}_{n,\mat{H}}}$ is applied in Section~\ref{sec:data.application} to smooth the posterior distribution of a generalized Pareto model fitted to large electromagnetic storms.

\subsection{Asymptotic results}\label{sec:asymptotics}

In this section, the local limit theorem for the MIG distribution (Theorem~\ref{thm:LLT.multivariate}) is applied to derive the asymptotic variance of the new density estimator $\smash{\hat{f}_{n,\mat{H}}}$ at each point $\bb{\xi}\in\mathcal{H}(\bb{\beta})$; see Proposition~\ref{prop:asymptotic.variance}. Combined with a careful analysis of the asymptotic bias (Proposition~\ref{prop:asymptotic.bias}), this result yields the leading-order expansions for both the mean squared error (MSE) (Corollary~\ref{cor:bias.var.implies.MSE.density}) and the MISE (Corollary~\ref{cor:MISE.optimal.density}) and enable the derivation of asymptotically optimal bandwidths. Moreover, the asymptotic normality of $\smash{\hat{f}_{n,\mat{H}}}$ at each point $\bb{\xi}$ is established in Theorem~\ref{thm:asymp.normality}. The proofs are deferred to Appendix~\ref{app:proofs.application.asymptotics}.

Throughout, expectations are taken with respect to the joint distribution of the mutually independent observations $\bb{X}_1,\ldots,\bb{X}_n$. Whether explicitly or not, the bandwidth matrix parameter $\mat{H} = \mat{H}(n)$ is assumed to be a function of the sample size whose spectral norm vanishes as $n \to \infty$, i.e., $\lim_{n\to \infty} \|\mat{H}\|_2 = 0$.

\begin{proposition}[Pointwise bias]\label{prop:asymptotic.bias}
Assume that $f$ is twice differentiable, and that its second order partial derivatives are uniformly continuous and bounded on the half-space $\mathcal{H}(\bb{\beta})$.
Then, for any real vector $\bb{\xi}\in \mathcal{H}(\bb{\beta})$, one has, as $n\to \infty$,
\[
\Bias\big\{\hat{f}_{n,\mat{H}}(\bb{\xi})\big\} = \frac{1}{2} \, \bb{\beta}^{\top} \bb{\xi} \sum_{i,j=1}^d \mat{H}_{ij} \frac{\partial^2}{\partial \xi_i \partial \xi_j} f(\bb{\xi}) + \oo(\bb{\beta}^{\top}\bb{\xi} \|\mat{H}\|_2).
\]
Alternatively, if $D^2 f$ denotes the Hessian matrix, one can rewrite the above as
\[
\Bias\big\{\hat{f}_{n,\mat{H}}(\bb{\xi})\big\} = \frac{1}{2} \, \bb{\beta}^{\top} \bb{\xi} \, \mathrm{tr}\{\mat{H} D^2 f(\bb{\xi})\} + \oo(\bb{\beta}^{\top}\bb{\xi} \|\mat{H}\|_2).
\]
\end{proposition}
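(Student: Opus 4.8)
The plan is to expand the bias
\[
\Bias\big\{\hat f_{n,\mat H}(\bb\xi)\big\}=\EE\{k_{\bb\beta,\bb\xi,\mat H}(\bb X)\}-f(\bb\xi)=\int_{\mathcal H_d(\bb\beta)} k_{\bb\beta,\bb\xi,\mat H}(\bb x)\,f(\bb x)\,\rd\bb x-f(\bb\xi)
\]
by exploiting the fact that, by the symmetry property~\eqref{eq:general.multivariate.density}, $\bb x\mapsto k_{\bb\beta,\bb\xi,\mat H}(\bb x)$ is itself the density of a $\mathrm{MIG}(\bb\beta,\bb\xi,\mat H)$ random vector, \emph{not} of $\bb X$. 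So the integral above is $\EE\{f(\bb Y)\}$ where $\bb Y\sim\mathrm{MIG}(\bb\beta,\bb\xi,\mat H)$. By~\eqref{eq:esp.var.multivariate}, with $\mat\Omega=\mat H$, one has $\EE(\bb Y)=\bb\xi$ and $\Var(\bb Y)=\bb\beta^{\top}\bb\xi\,\mat H$, both of which vanish in the relevant sense as $\|\mat H\|_2\to 0$. I would therefore perform a second-order Taylor expansion of $f$ about $\bb\xi$:
\[
f(\bb Y)=f(\bb\xi)+\nabla f(\bb\xi)^{\top}(\bb Y-\bb\xi)+\tfrac12(\bb Y-\bb\xi)^{\top}D^2 f(\bb\xi)(\bb Y-\bb\xi)+\mathcal R,
\]
take expectations, and use $\EE(\bb Y-\bb\xi)=\bb 0_d$ and $\EE\{(\bb Y-\bb\xi)(\bb Y-\bb\xi)^{\top}\}=\bb\beta^{\top}\bb\xi\,\mat H$ to get the leading term $\tfrac12\bb\beta^{\top}\bb\xi\sum_{i,j}\mat H_{ij}\,\partial^2_{ij}f(\bb\xi)=\tfrac12\bb\beta^{\top}\bb\xi\,\mathrm{tr}\{\mat H D^2 f(\bb\xi)\}$.

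The work is then entirely in controlling the remainder $\EE\{\mathcal R\}$ and showing it is $\oo(\bb\beta^{\top}\bb\xi\|\mat H\|_2)$. Here I would use the integral form of the Taylor remainder,
\[
\mathcal R=\tfrac12(\bb Y-\bb\xi)^{\top}\big\{D^2 f(\bb\xi+t(\bb Y-\bb\xi))-D^2 f(\bb\xi)\big\}(\bb Y-\bb\xi)
\]
for a suitable $t\in(0,1)$ (or the averaged version). By the assumed uniform continuity of the second-order partials, for any $\e>0$ there is $\eta>0$ such that $\|D^2 f(\bb z)-D^2 f(\bb\xi)\|\le\e$ whenever $\|\bb z-\bb\xi\|\le\eta$; on the event $\{\|\bb Y-\bb\xi\|\le\eta\}$ the remainder is bounded by $\tfrac{\e}{2}\|\bb Y-\bb\xi\|^2$, contributing $\OO(\e\,\bb\beta^{\top}\bb\xi\|\mat H\|_2)$ after taking expectation (using $\EE\|\bb Y-\bb\xi\|^2=\bb\beta^{\top}\bb\xi\,\mathrm{tr}(\mat H)\le d\,\bb\beta^{\top}\bb\xi\|\mat H\|_2$). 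On the complementary event $\{\|\bb Y-\bb\xi\|>\eta\}$, the boundedness of $D^2 f$ gives $|\mathcal R|\le C\|\bb Y-\bb\xi\|^2$, and I would bound $\EE\{\|\bb Y-\bb\xi\|^2\ind(\|\bb Y-\bb\xi\|>\eta)\}$ using a higher moment of the MIG distribution together with a Markov/Cauchy--Schwarz argument: this tail expectation is $\oo(\bb\beta^{\top}\bb\xi\|\mat H\|_2)$ because the MIG moments of order $4$ scale like $(\bb\beta^{\top}\bb\xi\|\mat H\|_2)^2$, so $\EE\{\|\bb Y-\bb\xi\|^2\ind(\cdot)\}\le\eta^{-2}\EE\|\bb Y-\bb\xi\|^4=\OO\{(\bb\beta^{\top}\bb\xi\|\mat H\|_2)^2\}$. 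Combining the two pieces and letting $\e\to 0$ yields the $\oo$-bound.

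The only slightly delicate point is justifying the required moment estimates for $\bb Y\sim\mathrm{MIG}(\bb\beta,\bb\xi,\mat H)$ uniformly as $\|\mat H\|_2\to 0$; these follow from the explicit cumulant generating function in Theorem~1 of~\cite{MR2019130} (or from the stochastic representation in Proposition~\ref{prop:exact.sim}, writing $\bb Y$ as an invertible linear image of $(R,\bb Z^{\top})^{\top}$ with $R\sim\mathrm{IG}$ and $\bb Z\mid R$ Gaussian, whose moments are standard), and one checks that the fourth central moment is $\OO(\bb\beta^{\top}\bb\xi\|\mat H\|_2)^2$, with the implied constant depending only on $d$, $\bb\beta$, and $\bb\xi$. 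A minor alternative to the Taylor-remainder route would be to invoke the local limit theorem (Theorem~\ref{thm:LLT.multivariate}) to replace $k_{\bb\beta,\bb\xi,\mat H}$ by the matching Gaussian density up to a relative error $\OO(\|\mat H\|_2^{1/2})$ on the bulk and then use the classical second-order kernel expansion for the Gaussian, but the direct moment computation via~\eqref{eq:esp.var.multivariate} is cleaner and avoids separately bounding the out-of-bulk contribution. I expect the tail-moment estimate to be the main obstacle, though it is routine given the available representations.
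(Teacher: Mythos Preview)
Your proposal is correct and follows essentially the same route as the paper: write the bias as $\EE\{f(\bb Y)\}-f(\bb\xi)$ with $\bb Y\sim\mathrm{MIG}(\bb\beta,\bb\xi,\mat H)$, Taylor-expand to second order, read off the leading term from the MIG mean and covariance~\eqref{eq:esp.var.multivariate}, and split the remainder into a near part (handled by uniform continuity of $D^2f$) and a far part (handled by boundedness of $D^2f$ plus a tail-moment estimate). The only difference is in the tail control: the paper uses a Chernoff-type bound together with the asymptotic normality~\eqref{eq:CLT.multivariate} to make $\Delta_2$ decay like any power of $\|\mat H\|_2$, whereas your fourth-moment Markov bound $\EE\{\|\bb Y-\bb\xi\|^2\ind(\|\bb Y-\bb\xi\|>\eta)\}\le\eta^{-2}\EE\|\bb Y-\bb\xi\|^4=\OO(\|\mat H\|_2^2)$ is simpler and already sufficient for the stated $\oo(\bb\beta^{\top}\bb\xi\,\|\mat H\|_2)$.
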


\begin{remark}\label{rem:boundary.bias}
There are many ways to analyze the behavior of the bias as the estimation point $\bb{\xi}$ approaches the boundary. For example, if one considers a sequence $\bb{\xi} = \bb{\xi}(\mat{H})\in \mathcal{H}(\bb{\beta})$ such that $\bb{\beta}^{\top} \bb{\xi} \asymp \|\mat{H}\|_2$ as $n\to \infty$, then
\[
\Bias\{\hat{f}_{n,\mat{H}}(\bb{\xi})\} = \OO(\|\mat{H}\|_2^2).
\]
Alternatively, one may analyze the bias as $\bb{\xi}$ converges to a specific point on the boundary --- say, the origin $\bb{0}_d$ --- at the same rate in each coordinate as the eigenvalues of the bandwidth matrix converge to $0$. Specifically, assuming the spectral decomposition $\mat{H} = \mat{V}^{\top} \mat{\Lambda} \mat{V}$, where $\mat{V} = (v_{ij})_{1\leq i,j \leq d}$ and $\mat{\Lambda} = \diag(\lambda_1,\ldots,\lambda_d)$, one can let $\bb{\xi} = (c_1 \lambda_1,\ldots,c_d \lambda_d)^{\top}\!\in \mathcal{H}(\bb{\beta})$ for some appropriate constants $c_1,\ldots,c_d\in \R\backslash \{0\}$. Then one finds that, as $n\to \infty$,
\[
\Bias\big\{\hat{f}_{n,\mat{H}}(\bb{\xi})\big\}
= \frac{1}{2} \left\{ \sum_{\ell=1}^d \beta_{\ell} c_{\ell} \lambda_{\ell} \right\} \sum_{k=1}^d \lambda_k \left\{ \sum_{i,j=1}^d \frac{v_{ik} v_{jk}}{c_i c_j} \frac{\partial^2 f(\bb{\xi})}{\partial \lambda_i \partial \lambda_j} \right\}
= \OO(\|\mat{H}\|_2^2).
\]
More generally, if $\bb{\xi}$ remains fixed and nonzero in at least one coordinate (i.e., if there exists a nonempty subset of indices $\mathcal{J}\subseteq \{1,\ldots,d\}$ such that $\xi_i = c_i \lambda_i$ for all $i\in \mathcal{J}$ while $\xi_i\neq 0$ remains fixed for all $i\in \{1,\ldots,d\}\setminus \mathcal{J}$), then the bias behaves asymptotically as $\OO(\|\mat{H}\|_2)$, which matches the behavior observed in the interior of the support. For classical kernel density estimators, however, the bias near the boundary is typically $\asymp 1$, i.e., it does not vanish as $n \to \infty$. This well-known boundary bias issue, mentioned in Section~\ref{sec:density.estimation.half.spaces}, arises because fixed symmetric kernels allocate mass outside the support when the estimation point lies close to the support's edges \citep[Section~2]{MarronRuppert1994}. A similar phenomenon occurs in nonparametric kernel regression; see, e.g., \citet{MR1193323}. In contrast, the bias of the MIG kernel density estimator decreases near the boundary at least at the same rate as in the interior, ensuring consistent performance across the entire support.
\end{remark}

Although $\smash{\hat{f}_{n,\mat{H}}}$ exhibits superior boundary bias behavior, this benefit comes at the cost of increasing the variance near the boundary, as the next proposition reveals. This phenomenon is a direct consequence of the adaptive mechanism of the asymmetric MIG kernel. To avoid the boundary bias problem that affects symmetric kernels, the MIG kernel adapts its shape based on the estimation point $\xi$. As $\xi$ approaches the boundary of the half-space (i.e., as $\bb{\beta}^{\top} \bb{\xi} \to 0$), the kernel becomes increasingly skewed and concentrated to confine its mass within the support. This necessary adaptation ensures that the bias remains low. However, by becoming more ``peaked,'' the resulting density estimate at that point relies more heavily on a smaller effective number of local data points. This heightened sensitivity to the specific locations of a few nearby observations leads to greater sampling variability, and thus, a higher variance. This behavior is typical for asymmetric kernel estimators; see, e.g., \cite{Chen1999} or \cite{MR4319409}. It is also a recurring feature in the study of Bernstein estimators; see, e.g., \cite{MR2925964} or \cite{doi:10.1515/stat-2022-0111}.

\medskip

In contrast, a classical kernel uses a fixed, symmetric shape, which leads to some of the kernel's mass to spill over and higher boundary bias. The variance of this type of estimator, however, is not structurally affected by the boundary. Given that the variance depends on the kernel's fixed shape and the target density $f$, rather than the point's proximity to the boundary, it remains stable across the domain. This illustrates the alternative tradeoff: putting up with high bias in exchange for stable variance.

\begin{proposition}[Pointwise variance]\label{prop:asymptotic.variance}
Assume that $f$ is Lipschitz continuous and bounded on the half-space $\mathcal{H}(\bb{\beta})$. Then, for any real vector $\bb{\xi}\in \mathcal{H}(\bb{\beta})$, one has, as $n\to \infty$,
\[
\Var\big\{\hat{f}_{n,\mat{H}}(\bb{\xi})\big\} = n^{-1} |\mat{H}|^{-1/2} \frac{f(\bb{\xi})}{(4\pi \bb{\beta}^{\top} \bb{\xi})^{d/2}} + \oo_{\bb{\beta},\bb{\xi}}(n^{-1} |\mat{H}|^{-1/2}).
\]
\end{proposition}

\newpage
\begin{remark}\label{rem:boundary.variance}
As in Remark~\ref{rem:boundary.bias}, if a sequence $\bb{\xi} = \bb{\xi}(\mat{H})\in \mathcal{H}(\bb{\beta})$ is chosen such that $\smash{\bb{\beta}^{\top} \bb{\xi} \asymp \|\mat{H}\|_2}$ as $n\to \infty$, then the variance satisfies
\[
\Var\big\{\hat{f}_{n,\mat{H}}(\bb{\xi})\big\} = \OO(n^{-1} |\mat{H}|^{-1/2} \|\mat{H}\|_2^{-d/2}).
\]
Furthermore, consider the scenario in which $\bb{\xi}$ approaches $\bb{0}_d$ in the coordinates of a nonempty subset $\mathcal{J}\subseteq \{1,\ldots,d\}$ (i.e., $\xi_i = c_i \lambda_i$ for all $i\in \mathcal{J}$ while $\xi_i\neq 0$ remains fixed for all $i\in \{1,\ldots,d\}\setminus \mathcal{J}$), then
\[
\Var\big\{\hat{f}_{n,\mat{H}}(\bb{\xi})\big\} = \OO\big(n^{-1} |\mat{H}|^{-1/2} \max_{i\in \mathcal{J}} \lambda_i^{-d/2}\big).
\]
In particular, if all eigenvalues of $\mat{H}$ remain equal as $n\to \infty$, then the variance of the estimator near the origin is $\OO(n^{-1} |\mat{H}|^{-1})$.
\end{remark}

Combining Propositions~\ref{prop:asymptotic.bias}--\ref{prop:asymptotic.variance} yields the asymptotic expansion of the MSE, as outlined below.

\begin{corollary}[Mean squared error]\label{cor:bias.var.implies.MSE.density}
Assume that $f$ is twice differentiable, and that its second order partial derivatives are uniformly continuous and bounded on the half-space $\mathcal{H}(\bb{\beta})$. Then, for any real vector $\bb{\xi}\in \mathcal{H}(\bb{\beta})$, one has, as $n\to \infty$,
\[
\begin{aligned}
\mathrm{MSE}\big\{\hat{f}_{n,\mat{H}}(\bb{\xi})\big\}
&= \EE\left\{|\hat{f}_{n,\mat{H}}(\bb{\xi}) - f(\bb{\xi})|^2\right\} \\
&= n^{-1} |\mat{H}|^{-1/2} \frac{f(\bb{\xi})}{(4\pi \bb{\beta}^{\top} \bb{\xi})^{d/2}} + \frac{1}{4} (\bb{\beta}^{\top} \bb{\xi})^2 \mathrm{tr}^2\{\mat{H} D^2 f(\bb{\xi})\} \\
&\qquad+ \oo_{\bb{\beta},\bb{\xi}}(n^{-1} |\mat{H}|^{-1/2}) + \oo\{(\bb{\beta}^{\top}\bb{\xi})^2 \|\mat{H}\|_2^2\}.
\end{aligned}
\]
The asymptotically optimal choice of $\mat{H}$ with respect to $\mathrm{MSE}$ is
\[
\mat{H}_{\mathrm{opt}}(\bb{\xi}) = \argmin_{\mat{H}\in \mathcal{S}_{++}^d} \left[n^{-1} |\mat{H}|^{-1/2} \frac{f(\bb{\xi})}{(4\pi \bb{\beta}^{\top} \bb{\xi})^{d/2}} + \frac{1}{4} (\bb{\beta}^{\top} \bb{\xi})^2 \mathrm{tr}^2\{\mat{H} D^2 f(\bb{\xi})\}\right].
\]
If the bandwidth $\mat{H}$ is restricted to be a scalar matrix, viz.~$\mat{H} = h^2 \mat{I}_d$ for some positive real $h\in (0,\infty)$, then the asymptotically optimal choice of $\mat{H}$ with respect to $\mathrm{MSE}$ is
\[
\mat{H}_{\mathrm{opt}}(\bb{\xi}) = \{h_{\mathrm{opt}}(\bb{\xi})\}^2 \mat{I}_d,
\]
where
\[
h_{\mathrm{opt}}(\bb{\xi}) = \left[\frac{d}{n} \times \frac{f(\bb{\xi}) / (4\pi \bb{\beta}^{\top} \bb{\xi})^{d/2}}{(\bb{\beta}^{\top} \bb{\xi})^2 \{\Delta f(\bb{\xi})\}^2}\right]^{1/(d+4)},
\]
and where $\Delta$ denotes the Laplacian operator. Consequently, one has, as $n\to \infty$,
\[
\begin{aligned}
\mathrm{MSE}\left\{\hat{f}_{n,\mat{H}_{\mathrm{opt}}(\bb{\xi})}(\bb{\xi})\right\}
&= \frac{4 + d}{4d} \left\{\frac{d}{n} \times \frac{f(\bb{\xi})}{(4\pi \bb{\beta}^{\top} \bb{\xi})^{d/2}}\right\}^{4/(d+4)} \\
&\qquad\times \left[(\bb{\beta}^{\top} \bb{\xi})^2 \{\Delta f(\bb{\xi})\}^2\right]^{d/(d+4)} + \oo(n^{-4/(d+4)}).
\end{aligned}
\]
\end{corollary}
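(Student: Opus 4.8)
\textbf{Proof plan for Corollary~\ref{cor:bias.var.implies.MSE.density}.}
The plan is to combine the pointwise variance expansion from Proposition~\ref{prop:asymptotic.variance} with the pointwise bias expansion from Proposition~\ref{prop:asymptotic.bias} via the standard decomposition $\mathrm{MSE}\{\hat{f}_{n,\mat{H}}(\bb{\xi})\} = \Var\{\hat{f}_{n,\mat{H}}(\bb{\xi})\} + \Bias^2\{\hat{f}_{n,\mat{H}}(\bb{\xi})\}$. Squaring the bias gives the leading term $\tfrac14 (\bb{\beta}^{\top}\bb{\xi})^2 \, \mathrm{tr}^2\{\mat{H} D^2 f(\bb{\xi})\}$ plus a cross term and a squared-remainder term, both of which are $\oo\{(\bb{\beta}^{\top}\bb{\xi})^2 \|\mat{H}\|_2^2\}$; adding the variance yields the stated MSE expansion. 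This first part is essentially bookkeeping and should present no difficulty.

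The second part is the optimization. For the general matrix case one simply writes $\mat{H}_{\mathrm{opt}}(\bb{\xi})$ as the stated $\argmin$; no closed form is claimed, so nothing further is needed there. For the restricted case $\mat{H} = h^2 \mat{I}_d$, I would substitute $|\mat{H}|^{-1/2} = h^{-d}$ and $\mathrm{tr}\{\mat{H} D^2 f(\bb{\xi})\} = h^2 \,\mathrm{tr}\{D^2 f(\bb{\xi})\} = h^2 \Delta f(\bb{\xi})$ into the MSE expansion, obtaining a function of the scalar $h$ of the form $g(h) = A h^{-d} + B h^4$ with $A = n^{-1} f(\bb{\xi})/(4\pi \bb{\beta}^{\top}\bb{\xi})^{d/2}$ and $B = \tfrac14 (\bb{\beta}^{\top}\bb{\xi})^2 \{\Delta f(\bb{\xi})\}^2$. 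Differentiating, $g'(h) = -dA h^{-d-1} + 4B h^3 = 0$ gives $h^{d+4} = dA/(4B)$, i.e.\ $h_{\mathrm{opt}}(\bb{\xi}) = \{dA/(4B)\}^{1/(d+4)}$, which upon plugging in $A$ and $B$ simplifies to the displayed formula (the factor $4$ in the denominator of $B$ cancels the $\tfrac14$). A quick second-derivative check confirms this is a minimum since $g$ is convex on $(0,\infty)$.

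Finally, to obtain $\mathrm{MSE}\{\hat{f}_{n,\mat{H}_{\mathrm{opt}}(\bb{\xi})}(\bb{\xi})\}$ I would substitute $h = h_{\mathrm{opt}}(\bb{\xi})$ back into $g(h) = A h^{-d} + B h^4$. Writing $h^{d+4} = dA/(4B)$, one has $h^{-d} = h^4 \cdot h^{-(d+4)} = h^4 \cdot 4B/(dA)$, so $A h^{-d} = (4/d) B h^4$ and hence $g(h_{\mathrm{opt}}) = (4/d + 1) B h_{\mathrm{opt}}^4 = \tfrac{4+d}{d} B \{dA/(4B)\}^{4/(d+4)}$. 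Expanding $B \cdot (4B)^{-4/(d+4)} = \tfrac14 (4B)^{1 - 4/(d+4)} = \tfrac14 (4B)^{d/(d+4)}$ and tracking the constants, with $4B = (\bb{\beta}^{\top}\bb{\xi})^2 \{\Delta f(\bb{\xi})\}^2$ and $dA = (d/n) f(\bb{\xi})/(4\pi \bb{\beta}^{\top}\bb{\xi})^{d/2}$, gives exactly the claimed expression $\tfrac{4+d}{4d}\{(d/n) f(\bb{\xi})/(4\pi\bb{\beta}^{\top}\bb{\xi})^{d/2}\}^{4/(d+4)} [(\bb{\beta}^{\top}\bb{\xi})^2\{\Delta f(\bb{\xi})\}^2]^{d/(d+4)}$. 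The error term: since $h_{\mathrm{opt}}(\bb{\xi}) \asymp n^{-1/(d+4)}$, both $\oo(n^{-1}|\mat{H}|^{-1/2}) = \oo(n^{-1} h^{-d}) = \oo(n^{-4/(d+4)})$ and $\oo(\|\mat{H}\|_2^2) = \oo(h^4) = \oo(n^{-4/(d+4)})$, so the remainder is $\oo(n^{-4/(d+4)})$ as stated. The only mild subtlety to flag is that the $\oo$-terms in Propositions~\ref{prop:asymptotic.variance}--\ref{prop:asymptotic.bias} are stated for a deterministic bandwidth sequence with $\|\mat{H}\|_2 \to 0$, so one must note that the optimal choice $h_{\mathrm{opt}}(\bb{\xi})^2 \mat{I}_d$ does satisfy this (it tends to $\bb{0}$ at the polynomial rate above), which legitimizes substituting it into the expansions; there is no genuine obstacle, only this consistency remark.
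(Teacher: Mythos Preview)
Your proposal is correct and matches the paper's approach: the paper does not give a separate proof of this corollary at all, presenting it explicitly as an immediate consequence of Propositions~\ref{prop:asymptotic.variance}--\ref{prop:asymptotic.bias} via the decomposition $\mathrm{MSE} = \Var + \Bias^2$, with the scalar optimization left implicit. Your write-up in fact supplies more detail (the calculus for $h_{\mathrm{opt}}$ and the tracking of the $\oo(n^{-4/(d+4)})$ remainder) than the paper does.
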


Next, by integrating the MSE, one obtains the asymptotics of the MISE.

\begin{corollary}[Mean integrated squared error]\label{cor:MISE.optimal.density}
Assume that $f$ is twice differentiable and that its second order partial derivatives are uniformly continuous and bounded on the half-space $\mathcal{H}(\bb{\beta})$. Furthermore, assume that the following integrals are finite:
\[
\int_{\mathcal{H}(\bb{\beta})} \frac{f(\bb{\xi})}{(4\pi \bb{\beta}^{\top} \bb{\xi})^{d/2}} \, \rd \bb{\xi} < \infty, \quad
\int_{\mathcal{H}(\bb{\beta})} \frac{1}{4} (\bb{\beta}^{\top} \bb{\xi})^2 \mathrm{tr}^2\{\mat{H} D^2 f(\bb{\xi})\} \rd \bb{\xi} < \infty.
\]
Then, as $n\to \infty$, one has
\[
\begin{aligned}
\mathrm{MISE}\big(\hat{f}_{n,\mat{H}}\big)
&= n^{-1} |\mat{H}|^{-1/2} \int_{\mathcal{H}(\bb{\beta})} \frac{f(\bb{\xi})}{(4\pi \bb{\beta}^{\top} \bb{\xi})^{d/2}} \, \rd \bb{\xi} + \oo_{\bb{\beta}}(n^{-1} |\mat{H}|^{-1/2}) \\
&\qquad+ \int_{\mathcal{H}(\bb{\beta})} \frac{1}{4} (\bb{\beta}^{\top} \bb{\xi})^2 \mathrm{tr}^2\{\mat{H} D^2 f(\bb{\xi})\} \rd \bb{\xi} + \oo_{\bb{\beta}}(\|\mat{H}\|_2^2).
\end{aligned}
\]
The asymptotically optimal choice of $\mat{H}$ with respect to $\mathrm{MISE}$ is
\begin{equation}\label{eq:AMISE.full}
\begin{aligned}
\mat{H}_{\mathrm{opt}}
&= \argmin_{\mat{H}\in \mathcal{S}_{++}^d} \left[n^{-1} |\mat{H}|^{-1/2} \int_{\mathcal{H}(\bb{\beta})} \frac{f(\bb{\xi})}{(4\pi \bb{\beta}^{\top} \bb{\xi})^{d/2}} \, \rd \bb{\xi} \right. \\
&\hspace{30mm}\left. + \int_{\mathcal{H}(\bb{\beta})} \frac{1}{4} (\bb{\beta}^{\top} \bb{\xi})^2 \mathrm{tr}^2\{\mat{H} D^2 f(\bb{\xi})\} \rd \bb{\xi}\right].
\end{aligned}
\end{equation}
If the bandwidth $\mat{H}$ is restricted to be a scalar matrix, viz.~$\mat{H} = h^2 \mat{I}_d$ for some positive real $h\in (0,\infty)$, then the asymptotically optimal choice of $\mat{H}$ with respect to $\mathrm{MISE}$ is
\[
\mat{H}_{\mathrm{opt}} = (h_{\mathrm{opt}})^2 \mat{I}_d,
\]
where
\begin{equation}\label{eq:AMISE_isotropic}
h_{\mathrm{opt}} = \left[\frac{d}{n} \times \frac{\int_{\mathcal{H}(\bb{\beta})} f(\bb{\xi}) / (4\pi \bb{\beta}^{\top} \bb{\xi})^{d/2} \rd \bb{\xi}}{\int_{\mathcal{H}(\bb{\beta})} (\bb{\beta}^{\top} \bb{\xi})^2 \{\Delta f(\bb{\xi})\}^2 \rd \bb{\xi}}\right]^{1/(d+4)}.
\end{equation}
Consequently, one has, as $n\to \infty$,
\[
\begin{aligned}
\mathrm{MISE}\big(\hat{f}_{n,\mat{H}_{\mathrm{opt}}}\big)
&= \frac{4 + d}{4d} \left\{\frac{d}{n} \int_{\mathcal{H}(\bb{\beta})} \frac{f(\bb{\xi})}{(4\pi \bb{\beta}^{\top} \bb{\xi})^{d/2}} \rd \bb{\xi}\right\}^{4/(d+4)} \\
&\qquad\times \left[\int_{\mathcal{H}(\bb{\beta})} (\bb{\beta}^{\top} \bb{\xi})^2 \{\Delta f(\bb{\xi})\}^2 \rd \bb{\xi}\right]^{d/(d+4)} + \oo(n^{-4/(d+4)}).
\end{aligned}
\]
\end{corollary}

\begin{remark}\label{rem:minimax}
The asymptotic rate of the optimal MISE, $n^{-4/(d+4)}$, established in Corollary~\ref{cor:MISE.optimal.density}, is known to be minimax for second-order kernel density estimators under the assumption that the second-order partial derivatives of the target density are uniformly continuous and bounded; see, e.g., \citet{MR300360}, \cite{MR594650} or \cite{Tsybakov2009}. For a detailed treatment of the minimaxity of the related Dirichlet kernel density estimator on the simplex, considering a wide range of $L^p$ loss functions and degrees of smoothness of the target density, refer to \citet{MR4544604}. Extending these authors' analysis to the present setting is an interesting direction for future research.
\end{remark}

\begin{remark}\label{rem:plug.in.bandwidth}
In Corollaries~\ref{cor:bias.var.implies.MSE.density}--\ref{cor:MISE.optimal.density}, the asymptotically optimal bandwidth matrices depend on the unknown density $f$ and are therefore referred to as oracle bandwidths. In practice, a common strategy is to evaluate these expressions by replacing $f$ with a pilot density; for example the MIG density $\smash{k_{\bb{\beta},\widehat{\bb{\xi}}_n,\widehat{\mat{\Omega}}_n}}$, where the parameter $\bb{\beta}\in \R^d$ is assumed known and $(\widehat{\bb{\xi}}_n,\widehat{\mat{\Omega}}_n)$ is an estimator for the pair $(\bb{\xi},\mat{\Omega})$ as given in Proposition~\ref{prop:MLE}. With this substitution, the integrals in \eqref{eq:AMISE.full} and \eqref{eq:AMISE_isotropic} over the half-space $\mathcal{H}(\bb{\beta})$ can be approximated via Monte Carlo simulation by drawing samples from the corresponding MIG distribution using the algorithm developed in Section~\ref{sec:rng.MIG}. Alternatively, one may simulate data from a multivariate truncated Gaussian distribution over the set $\{\bb{x}\in\R^d : \bb{\beta}^{\top}\bb{x} > \delta\}$ for some buffer $\delta \in [0, \infty)$. These approaches are variants of the plug-in selection method; for additional options, see \citet[Chapter~3]{MR3822372}.
\end{remark}

\begin{remark}\label{rem:scaling.isotropic}
If margins have different scale parameters or exhibit correlations, using an isotropic model with bandwidth matrix $\mat{H}= h^2 \mat{I}_d$ will be suboptimal. However, the data can be rotated, $\bb{X}_i^{\star}=\mat{L}^{-1}\bb{X}_i$, as proposed by \cite{Duong.Hazelton:2003}, to work out the optimal bandwidth for $\bb{X}_1^{\star},\ldots,\bb{X}_n^{\star}$, where $\mat{L}$ is the lower triangular Cholesky root of the sample covariance matrix, $\mat{S}_n = \mat{L} \mat{L}^{\top}$. If $\bb{X} \sim \mathrm{MIG}(\bb{\beta}, \bb{\xi}, \mat{\Omega})$, it is easy to see that $\mat{L}^{-1}\bb{X} \sim \mathrm{MIG}(\mat{L}^{\!\top}\!\bb{\beta}, \mat{L}^{-1}\bb{\xi}, \mat{L}^{-1}\mat{\Omega}\mat{L}^{-\!\top})$ \citep[Property~2]{MR2019130}, as the MIG family is closed under affine transformations. Thus, one can first rotate the data, estimate an isotropic bandwidth in the rotated space with the sample $\bb{X}_1^{\star},\ldots,\bb{X}_n^{\star}$, and then back-transform the bandwidth matrix to the original data orientation. Alternatively, scaling can be considered instead of rotation; for this, a similar procedure is followed by replacing $\mat{S}_n$ with a diagonal matrix with the component-specific variances.
\end{remark}

Asymptotic normality is established by verifying the Lindeberg condition for double arrays. The result, which is proved in Appendix~\ref{sec:B3}, is as follows.

\begin{theorem}[Asymptotic normality]\label{thm:asymp.normality}
Let the real vector $\bb{\xi}\in \mathcal{H}(\bb{\beta})$ be such that $f(\bb{\xi})\in (0,\infty)$. On the one hand, assume that $f$ is Lipschitz continuous and bounded on the half-space $\mathcal{H}(\bb{\beta})$. If $n^{1/2} |\mat{H}|^{1/4}\to \infty$ as $n\to \infty$, then
\[
n^{1/2} |\mat{H}|^{1/4} \left[\hat{f}_{n,\mat{H}}(\bb{\xi}) - \EE\big\{\hat{f}_{n,\mat{H}}(\bb{\xi})\big\}\right] \rightsquigarrow \mathcal{N}\left[ 0,\frac{f(\bb{\xi})}{(4\pi \bb{\beta}^{\top} \bb{\xi})^{d/2}}\right].
\]
On the other hand, assume that $f$ is twice differentiable and that its second order partial derivatives are uniformly continuous and bounded on the half-space $\mathcal{H}(\bb{\beta})$. If $n^{1/2} |\mat{H}|^{1/4}\to \infty$ and $n^{1/2} |\mat{H}|^{1/4} \|\mat{H}\|_2\to 0$ as $n\to \infty$, then the last equation together with Proposition~\ref{prop:asymptotic.bias} imply
\[
n^{1/2} |\mat{H}|^{1/4} \big\{\hat{f}_{n,\mat{H}}(\bb{\xi}) - f(\bb{\xi})\big\} \rightsquigarrow \mathcal{N}\left[ 0,\frac{f(\bb{\xi})}{(4\pi \bb{\beta}^{\top} \bb{\xi})^{d/2}}\right].
\]
\end{theorem}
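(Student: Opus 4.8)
The plan is to verify the Lindeberg condition for a double array of independent summands and then invoke the classical Lindeberg--Feller central limit theorem. Write $\hat{f}_{n,\mat{H}}(\bb{\xi}) - \EE\{\hat{f}_{n,\mat{H}}(\bb{\xi})\} = \sum_{i=1}^n Y_{n,i}$, where $Y_{n,i} = n^{-1}\big[k_{\bb{\beta},\bb{\xi},\mat{H}}(\bb{X}_i) - \EE\{k_{\bb{\beta},\bb{\xi},\mat{H}}(\bb{X}_i)\}\big]$ are i.i.d.\ (for each fixed $n$) with mean zero. First I would compute $s_n^2 \equiv \Var\{\hat{f}_{n,\mat{H}}(\bb{\xi})\} = \sum_{i=1}^n \Var(Y_{n,i})$, which by Proposition~\ref{prop:asymptotic.variance} equals $n^{-1}|\mat{H}|^{-1/2} f(\bb{\xi})/(4\pi \bb{\beta}^{\top}\bb{\xi})^{d/2}\{1 + \oo(1)\}$. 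After rescaling by $n^{1/2}|\mat{H}|^{1/4}$, the target asymptotic variance is exactly $f(\bb{\xi})/(4\pi \bb{\beta}^{\top}\bb{\xi})^{d/2}$, which is finite and strictly positive under the hypothesis $f(\bb{\xi})\in(0,\infty)$.

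The core step is the Lindeberg condition: for every $\e > 0$,
\[
\frac{1}{s_n^2}\sum_{i=1}^n \EE\big[Y_{n,i}^2 \ind\{|Y_{n,i}| > \e s_n\}\big] \longrightarrow 0 \quad \text{as } n\to\infty.
\]
By the i.i.d.\ structure this reduces to showing $n\,\EE[Y_{n,1}^2 \ind\{|Y_{n,1}| > \e s_n\}]/s_n^2 \to 0$. I would bound $|Y_{n,1}| \leq n^{-1}(k_{\bb{\beta},\bb{\xi},\mat{H}}(\bb{X}_1) + \EE\{k_{\bb{\beta},\bb{\xi},\mat{H}}(\bb{X}_1)\})$; since $\EE\{k_{\bb{\beta},\bb{\xi},\mat{H}}(\bb{X}_1)\} = \hat f$'s mean $= f(\bb{\xi}) + \oo(1) = \OO(1)$, and since the MIG kernel's pointwise sup is $\|k_{\bb{\beta},\bb{\xi},\mat{H}}\|_\infty = \OO_{\bb{\beta},\bb{\xi}}(|\mat{H}|^{-1/2})$ — read off directly from the density formula~\eqref{eq:general.multivariate.density} after optimizing over $\bb{x}$, or more simply bounded via the normalizing constant using Theorem~\ref{thm:LLT.multivariate} to compare to the Gaussian whose peak is $(2\pi)^{-d/2}(\mu\omega\bb{\beta}^{\top}\bb{\xi}_0)^{-d/2}|\mat{\Omega}_0|^{-1/2}$ — one gets $|Y_{n,1}| = \OO_{\bb{\beta},\bb{\xi}}(n^{-1}|\mat{H}|^{-1/2})$ almost surely. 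Meanwhile $\e s_n \asymp \e\, n^{-1/2}|\mat{H}|^{-1/4}$. The ratio $|Y_{n,1}|/(\e s_n) = \OO_{\bb{\beta},\bb{\xi}}(\e^{-1} n^{-1/2}|\mat{H}|^{-1/4}) \to 0$ precisely because of the hypothesis $n^{1/2}|\mat{H}|^{1/4}\to\infty$. Hence for $n$ large enough the indicator $\ind\{|Y_{n,1}| > \e s_n\}$ is identically zero, so the Lindeberg sum is exactly $0$. This is cleaner than the usual truncation estimate: the bounded-kernel structure makes the condition trivially satisfiable. Lindeberg--Feller then yields $s_n^{-1}\sum_i Y_{n,i} \rightsquigarrow \mathcal{N}(0,1)$, and multiplying through by $n^{1/2}|\mat{H}|^{1/4}$ and using $n|\mat{H}|^{1/2} s_n^2 \to f(\bb{\xi})/(4\pi\bb{\beta}^{\top}\bb{\xi})^{d/2}$ gives the first displayed conclusion by Slutsky.

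For the second conclusion I would write $n^{1/2}|\mat{H}|^{1/4}\{\hat f_{n,\mat{H}}(\bb{\xi}) - f(\bb{\xi})\} = n^{1/2}|\mat{H}|^{1/4}[\hat f_{n,\mat{H}}(\bb{\xi}) - \EE\{\hat f_{n,\mat{H}}(\bb{\xi})\}] + n^{1/2}|\mat{H}|^{1/4}\Bias\{\hat f_{n,\mat{H}}(\bb{\xi})\}$. By Proposition~\ref{prop:asymptotic.bias} the bias is $\OO(\bb{\beta}^{\top}\bb{\xi}\,\|\mat{H}\|_2) = \OO_{\bb{\beta},\bb{\xi}}(\|\mat{H}\|_2)$, so the second term is $\OO_{\bb{\beta},\bb{\xi}}(n^{1/2}|\mat{H}|^{1/4}\|\mat{H}\|_2) = \oo(1)$ by the extra hypothesis $n^{1/2}|\mat{H}|^{1/4}\|\mat{H}\|_2\to 0$. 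A final application of Slutsky's theorem combines the degenerate bias term with the limiting normal from the first part. The main obstacle, such as it is, is justifying the uniform sup-norm bound $\|k_{\bb{\beta},\bb{\xi},\mat{H}}\|_\infty = \OO_{\bb{\beta},\bb{\xi}}(|\mat{H}|^{-1/2})$ with a constant that does not depend on $n$ through $\mat{H}$; this follows from the explicit density~\eqref{eq:general.multivariate.density} by a short calculation (the polynomial prefactor $(\bb{\beta}^{\top}\bb{x})^{-(d/2+1)}$ times the Gaussian-type exponential is maximized at a point where $\bb{\beta}^{\top}\bb{x} \asymp \bb{\beta}^{\top}\bb{\xi}$, uniformly as $\|\mat{H}\|_2\to 0$), and everything else is the routine Lindeberg verification above.
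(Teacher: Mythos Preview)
Your proposal is correct and follows essentially the same route as the paper: a Lindeberg--Feller argument for the i.i.d.\ triangular array, with the key observation that the uniform bound $\|k_{\bb{\beta},\bb{\xi},\mat{H}}\|_\infty = \OO_{\bb{\beta},\bb{\xi}}(|\mat{H}|^{-1/2})$ forces the Lindeberg indicator to vanish identically for large $n$ under the hypothesis $n^{1/2}|\mat{H}|^{1/4}\to\infty$. The paper isolates that sup-norm bound as a separate technical lemma (Lemma~\ref{lem:uniform.bound}), proved by a direct two-case calculation on the density rather than via the local limit theorem, but otherwise the structure --- variance from Proposition~\ref{prop:asymptotic.variance}, trivial Lindeberg verification, then Slutsky with the bias estimate from Proposition~\ref{prop:asymptotic.bias} for the second display --- is identical to yours.
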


\subsection{Simulation study}\label{sec:simulation.study}

In this section, a simulation study is presented to assess the finite-sample performance of the MIG kernel density estimator. Section~\ref{subsec:simulation.competing.estimators} introduces the competing estimators, Section~\ref{subsec:simulation.target.distributions} lists the target distributions, Section~\ref{subsec:simulation.bandwidth.selection} details the bandwidth selection criteria, and Section~\ref{subsec:simulation.performance.measures} defines the performance metrics. The simulation results are summarized in Section~\ref{subsec:simulation.results}, with additional practical considerations discussed in Section~\ref{subsec:simulation.discussion}.

\subsubsection{Competing estimators} \label{subsec:simulation.competing.estimators}

To evaluate the performance of the MIG kernel density estimator, it is useful to include competing estimators for benchmarking purposes.

One way to overcome the half-space support constraint is to consider a mapping onto $\R^d$, coupled with either a traditional multivariate Gaussian kernel density estimator or a product-kernel density estimator. Specifically, consider the linear map $\bb{x} \mapsto \mat{Q}\bb{x}$ from $\mathcal{H}(\bb{\beta})$ to $\R_{+} \times \R^{d-1}$ as described in Proposition~\ref{prop:MIG.stochastic.representation}, and define $T(\bb{x}) = (\ln(x_1),\, x_2, \ldots, x_d)^{\top}$, which maps $\R_{+} \times \R^{d-1}$ onto $\R^d$. Then, define the transformation $g(\bb{x}) = T(\mat{Q}\bb{x})$, which maps $\mathcal{H}(\bb{\beta})$ onto $\R^d$, with associated Jacobian $J_g(\bb{x}) = \|\bb{\beta}\|_2 / (\bb{\beta}^{\top}\bb{x})$. If $\phi_d(\,\cdot \, ; \bb{\mu}, \mat{H})$ denotes a multivariate Gaussian density with mean vector $\bb{\mu}$ and covariance matrix $\mat{H}$, the first competitor in the simulation study is the following {\it transformation} kernel density estimator on the half-space $\mathcal{H}(\bb{\beta})$:
\begin{equation}\label{eq:trans.Gaussian.KDE}
\widehat{f}_{n, \mat{H}}^{(\mathrm{trans})}(\bb{\xi}) = \frac{1}{n} \sum_{i=1}^n J_g(\bb{X}_i) \, \phi_d\big\{g(\bb{X}_i) ; g(\bb{\xi}), \mat{H}\big\}.
\end{equation}

The second competitor in the simulation study is a {\it truncation} kernel density estimator on the half-space $\mathcal{H}(\bb{\beta})$ that redistributes the mass of the Gaussian kernel $\phi_d$ that would otherwise spill over the support:
\begin{equation}\label{eq:trunc.Gaussian.KDE}
\widehat{f}_{n, \mat{H}}^{(\mathrm{trunc})}(\bb{\xi}) = \frac{1}{n} \sum_{i=1}^n \frac{\phi_d(\bb{X}_i ; \bb{\xi}, \mat{H})}{\Phi(0 ; -\bb{\beta}^{\top}\bb{\xi}, \bb{\beta}^{\top}\mat{H}\bb{\beta})},
\end{equation}
where $\Phi(\,\cdot \, ; \mu, \sigma^2)$ denotes the univariate cdf of the $\mathcal{N}(\mu,\sigma^2)$ distribution. Note that, because $\bb{\xi} \in \mathcal{H}(\bb{\beta})$, the mode of every truncated Gaussian component in the sum necessarily lies within the half-space.

\subsubsection{List of target distributions}\label{subsec:simulation.target.distributions}

Let $\mat{R}_d(\rho)= (1-\rho) \mat{I}_d +\rho\bb{1}_d\bb{1}_d^{\top}$ be an equicorrelation matrix with correlation coefficient $\rho \in (-1/(d-1), 1)$, and let $\bb{\iota}_d = (1,\ldots,d)^{\top}$. In the simulation study, the observations are drawn from four target distributions supported on $\mathcal{H}(\bb{\beta})$, with $\bb{\beta} = \bb{1}_d$, in dimension $d \in \{2,3\}$, namely
\begin{enumerate}[\quad${F}_1$:]
\item a {S}tudent-$t_5$ distribution truncated over $\mathcal{H}(\bb{\beta})$, with location vector $\bb{\beta} + 0.1\bb{1}_d$ and scale matrix $\diag(\sqrt{\bb{\iota}_d}) \, \mat{R}_d(0.5) \diag(\sqrt{\bb{\iota}_d})$;
\item a two-component mixture distribution with weights $(0.6,0.4)$, where
\begin{itemize}\setlength\itemsep{0em}
\item [a)] the first component is a {S}tudent-$t_5$ distribution truncated over $\mathcal{H}(\bb{\beta})$ with location vector $\bb{1}_d$ and scale matrix $\mat{R}_d(0.5)$;
\item [b)] the second component is a Gaussian distribution truncated over $\mathcal{H}(\bb{\beta})$, with scale matrix $\smash{\diag(\sqrt{\bb{\iota}_d}) \, \mat{R}_d(-0.2) \diag(\sqrt{\bb{\iota}_d})}$ and location vector $\bb{\beta} + 10\bb{1}_d - \bb{\iota}_d$;
\end{itemize}
\item a truncated multivariate skewed Gaussian distribution
\citep{Azzalini.Capitanio:2002} with scale matrix $\mat{\Omega} = \smash{\diag(\sqrt{\bb{\iota}_d}) \, \mat{R}_d(0.9) \diag(\sqrt{\bb{\iota}_d})}$, location vector $\bb{\xi} = 4\bb{1}_d$, and slant parameter $\bb{\alpha} = -5\bb{1}_d$;
\item a MIG distribution with parameters $\bb{\xi} = 2\bb{1}_d$ and $\mat{\Omega}=\mat{R}_d (0.5)$.
\end{enumerate}
Figure~\ref{fig:simulation.2d.densities} shows the log-density curves for the four data generating mechanisms in the bivariate setting. Distributions $F_1$ and $F_4$ have modes close to the boundary of the support. Only $F_4$ satisfies the integrability conditions of Corollary~\ref{cor:MISE.optimal.density}.

\vspace{-1mm}
\begin{figure}[H]
\centering
\includegraphics[width = 1\linewidth]{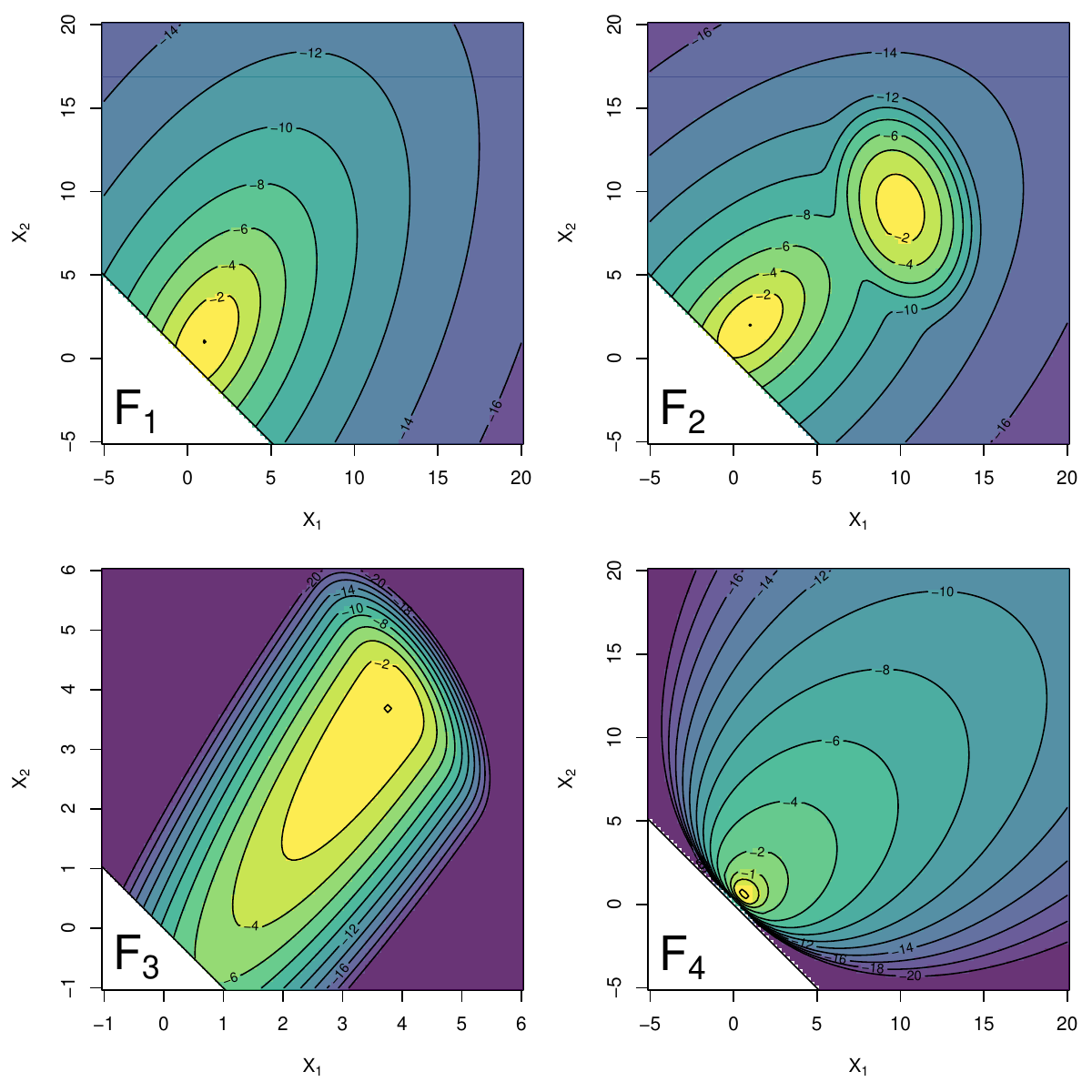}\vspace{-4mm}
\caption{Log-densities of the bivariate distributions $F_1,\ldots,F_4$. Contour lines indicate deviations in log-density from the mode. The color gradient ranges continuously from purple (lowest density) to yellow (highest density).\vspace{-10mm}}
\label{fig:simulation.2d.densities}
\end{figure}

\vspace{-3mm}
\subsubsection{Bandwidth selection}\label{subsec:simulation.bandwidth.selection}

This section describes several choices of bandwidth matrices $\mat{H}$ for the MIG kernel density estimator and its competitors, given a sample $\bb{x}_1,\ldots,\bb{x}_n\in \mathcal{H}(\bb{\beta})$.

Among these, bandwidth matrices that minimize the asymptotic mean integrated squared error (AMISE) derived in Corollary~\ref{cor:MISE.optimal.density} are considered, using the plug-in approach discussed in Remark~\ref{rem:plug.in.bandwidth} with MIG pilot densities. In addition, bandwidths that maximize the leave-one-out likelihood cross-validation (LCV) score \citep{Habbema.Hermans.Broek:1974} are examined, viz.
\begin{equation}\label{eq:LCV}
\argmax_{\mat{H}\in \mathcal{S}_{++}^d} \frac{1}{n} \sum_{i=1}^n \ln\big\{\hat{f}^{-(i)}_{n-1, \mat{H}}(\bb{x}_i)\big\},
\end{equation}
where
\[
\hat{f}^{-(i)}_{n-1, \mat{H}}(\bb{x}_i) = \frac{1}{n-1} \sum_{\substack{j=1 \\ j \neq i}}^n k_{\bb{\beta}, \bb{x}_i, \mat{H}}(\bb{x}_j);
\]
see, e.g., Section~2.1 of~\citet{Zhang.King.Hyndman:2006} for more details. The LCV criterion is sensitive to extremes and outliers and may lead to oversmoothing when such observations are present. Other options include the least-squares cross-validation (LSCV) criterion \citep{Rudemo:1982,Bowman:1984}, viz.
\begin{equation}\label{eq:LSCV}
\argmax_{\mat{H}\in \mathcal{S}_{++}^d} \left\{\frac{1}{n}\sum_{i=1}^n \hat{f}^{-(i)}_{n-1, \mat{H}}(\bb{x}_i) - \frac{1}{2}\int_{\mathcal{H}(\bb{\beta})}\hat{f}^2_{n, \mat{H}}(\bb{\xi}) \, \rd \bb{\xi}\right\},
\end{equation}
and the robust likelihood cross-validation (RLCV) criterion of \cite{Wu:2019}, which interpolates between the LCV and LSCV,
\begin{equation}\label{eq:RLCV}
\argmax_{\mat{H}\in \mathcal{S}_{++}^d} \frac{1}{n}\sum_{i=1}^n \ln^{\star}\big\{\hat{f}_{n, \mat{H}}(\bb{\xi})\big\} - b_n^*(\mat{H}),
\end{equation}
where
\begin{equation}
b_n^*(\mat{H}) = \int_{\mathcal{H}(\bb{\beta})} \left[\ind_{\{\hat{f}_{n, \mat{H}}(\bb{\xi}) \geq a_n\}} + \ind_{\{\hat{f}_{n, \mat{H}}(\bb{\xi}) < a_n\}} \frac{\hat{f}_{n, \mat{H}}(\bb{\xi})}{2a_n}\right] \hat{f}_{n, \mat{H}}(\bb{\xi}) \, \rd \bb{\xi}, \label{eq:bn}
\end{equation}
\[
\ln^{\star}(x) = \ind_{\{x \geq a_n\}}\ln(x) + \ind_{\{x < a_n\}} \left\{\ln(a_n) - 1 + \frac{x}{a_n}\right\}, \nonumber
\]
using the automatic threshold proposed by \cite{Wu:2019},
\[
a_n = \frac{\Gamma(d/2) \{\ln(n)\}^{1-d/2}}{(2\pi)^{d/2} |\widehat{\bb{\Sigma}}_n|^{1/2} \, n},
\]
with $\widehat{\bb{\Sigma}}_n = n^{-1} \sum_{i=1}^n (\bb{x}_i - \bar{\bb{x}}_n) (\bb{x}_i - \bar{\bb{x}}_n)^{\top}$ denoting the empirical covariance matrix.

The integrals \eqref{eq:LSCV} and \eqref{eq:bn} in the LSCV and RLCV criteria can be evaluated via Monte Carlo, as pointed out in Remark~\ref{rem:plug.in.bandwidth}. For all methods other than the AMISE minimization approach, the more popular and simpler kernel density estimator plug-in, as in \cite{Wu:2019}, was used based on the sample.

Estimators that minimize the AMISE (MIG kernel only) as well as those optimizing the LCV, LSCV, and RLCV criteria were examined. With the exception of the transformation estimator, $\smash{\widehat{f}_{n, \mat{H}}^{(\mathrm{trans})}(\bb{\xi})}$, the entries of the unstructured $d \times d$ bandwidth matrix were optimized using a parametrization based on its Cholesky root with positivity constraints on the diagonal \citep{Pinheiro.Bates:1996}. Also considered was the isotropic AMISE bandwidth matrix of the form $h^2 \mat{S}_n$ based on the transformed data, as discussed in Remark~\ref{rem:scaling.isotropic}.

For clarity, a formal list of the kernel methods and corresponding bandwidth selection approaches is provided below:

\begin{enumerate}[\quad A:]
\item MIG kernel with a spherical transformation and isotropic diagonal bandwidth matrix minimizing the AMISE \eqref{eq:AMISE_isotropic}, with MIG plug-in for $f$;
\item MIG kernel, full bandwidth matrix minimizing the AMISE \eqref{eq:AMISE.full}, with MIG plug-in for $f$;
\item MIG kernel, full bandwidth matrix minimizing the LCV criterion \eqref{eq:LCV};
\item MIG kernel, full bandwidth matrix minimizing the RLCV criterion \eqref{eq:RLCV};
\item truncated Gaussian kernel \eqref{eq:trunc.Gaussian.KDE}, full bandwidth matrix, kernel plug-in, LCV criterion \eqref{eq:LCV};
\item truncated Gaussian kernel \eqref{eq:trunc.Gaussian.KDE}, full bandwidth matrix, kernel plug-in, RLCV criterion \eqref{eq:RLCV};
\item truncated Gaussian kernel \eqref{eq:trunc.Gaussian.KDE}, full bandwidth matrix, kernel plug-in, LSCV criterion \eqref{eq:LSCV};
\item Gaussian kernel on half-space \eqref{eq:trans.Gaussian.KDE}, diagonal bandwidth matrix, kernel plug-in, LCV criterion \eqref{eq:LCV};
\item Gaussian kernel on half-space \eqref{eq:trans.Gaussian.KDE}, diagonal bandwidth matrix, kernel plug-in, RLCV criterion \eqref{eq:RLCV};
\item Gaussian kernel on half-space \eqref{eq:trans.Gaussian.KDE}, diagonal bandwidth matrix, kernel plug-in, LSCV criterion \eqref{eq:LSCV}.
\end{enumerate}

\subsubsection{Performance measures}\label{subsec:simulation.performance.measures}

To assess the performance of the MIG kernel density estimator and its competitors, three metrics (or measures) are considered in the simulation study.

For each dimension $d \in \{2, 3\}$, target distribution $F_i$ with $i \in \{1, \ldots, 4\}$, sample size $n \in \{250, 500, 1000\}$, and kernel/bandwidth method ($\text{method}\in \{A,\ldots,J\}$), the first metric included in the study is the root mean integrated squared error (RMISE) and is computed as
\[
\mathrm{RMISE} = \sqrt{\int_{\mathcal{H}(\bb{\beta})} \big\{\hat{f}_n^{(\text{method})}(\bb{x}) - f_i(\bb{x})\big\}^2 \, \rd \bb{x}}.
\]

The second metric considered is the boundary root mean integrated squared error (BRMISE), obtained by restricting the integration in the RMISE to the boundary region $\mathcal{B}(\bb{\beta}) = \big\{\bb{x} \in \mathcal{H}(\bb{\beta}) : \bb{x}^{\top} \bb{\beta} \leq (n/d)^{-1/(d+4)}\big\}$, and given by
\[
\mathrm{BRMISE} = \sqrt{\int_{\mathcal{B}(\bb{\beta})} \big\{\hat{f}_n^{(\text{method})}(\bb{x}) - f_i(\bb{x})\big\}^2 \, \rd \bb{x}}.
\]
Both the RMISE and BRMISE are estimated via numerical integration using the $h$-adaptive integration routines of Steven G. Johnson \citep{Genz.Malik:1980,Berntsen.Espelid.Genz:1991,cubatureC} after rotating the space with the projection matrix $\mat{Q}$ from Proposition~\ref{prop:MIG.stochastic.representation}, with an absolute error tolerance of $10^{-4}$ or up to a maximum of $5\times 10^5$ evaluations.

The third metric is the Kullback--Leibler divergence (KLD),
\[
\mathrm{KLD}(f_i \parallel \hat{f}_n^{(\text{method})}) = \int_{\mathcal{H}(\bb{\beta})} \left[\ln\big\{f_i(\bb{x})/\hat{f}_n^{(\text{method})}(\bb{x})\big\}\right] f_i(\bb{x}) \, \rd \bb{x},
\]
which quantifies the information lost when approximating the true (target) density $f_i$ for some $i \in \{1, \dots,4\}$ with the estimated density $\smash{\hat{f}_n^{(\text{method})}}$. It is computed via Monte Carlo integration using $10^4$ random draws from $F_i$.

\subsubsection{Results}\label{subsec:simulation.results}

The results of the simulation study appear in boxplots representing the $10^3$ measurements, calculated via numerical integration, for each of the three metrics (RMISE, BRMISE, KLD) defined in Section~\ref{subsec:simulation.performance.measures}, each $d\in \{2,3\}$, each $n\in \{250,500,1000\}$, and each of the combinations of estimators and selection methods (A--J) listed in Section~\ref{subsec:simulation.bandwidth.selection}; see Figure~\ref{fig:RMISE} for RMISE, Figure~\ref{fig:BRMISE} for BRMISE, and Figure~\ref{fig:KLD} for KLD. The findings are briefly summarized below.

\begin{enumerate}
\item No combination of kernel and bandwidth selection method universally dominates the others for any of the performance metrics considered.

\item The RMISE and BRMISE decrease with sample size, as expected.

\item In some scenarios, the variability of the performance measure is very much sample dependent, and can paradoxically increase when the sample size gets larger. The RLCV bandwidth selection method is seemingly less sensitive to these changes.

\item Overall, the truncated Gaussian kernel density estimator has the lowest median RMISE for the distributions $F_1$, $F_2$, and $F_3$ among the three kernel families considered. It performs the worst for $F_4$ due to two factors, namely the higher boundary bias and the sensitivity to outliers generated from the MIG distribution.

\item The MIG kernel density estimator is often more variable than its counterparts, except for data generated from $F_4$. This is in concordance with the asymptotic variance of the MIG kernel density estimator in Proposition~\ref{prop:asymptotic.variance} and discussed in Remark~\ref{rem:boundary.variance}.

\item By nature, the LSCV and the AMISE minimization methods yield lower RMISE than other criteria. The LCV criterion returns lower KLD estimates but the AMISE method ($B$) remains competitive.

\item Using a full bandwidth matrix does better than using a spherical rotation with a single parameter for AMISE minimization, but this performance comes with increased computational cost.

\item The MIG kernel density estimator has higher BRMISE than its counterparts for certain bandwidth selection methods when the data generated has lots of mass near the boundary. However, the MIG kernel density estimator with AMISE minimization has the smallest boundary bias of the three methods considered in the simulation, in concordance with the asymptotic bias expression derived in Proposition~\ref{prop:asymptotic.bias} and discussed in Remark~\ref{rem:boundary.bias}.
\end{enumerate}

\begin{figure}[!htp]
\centering
\includegraphics[width = 0.89\linewidth]{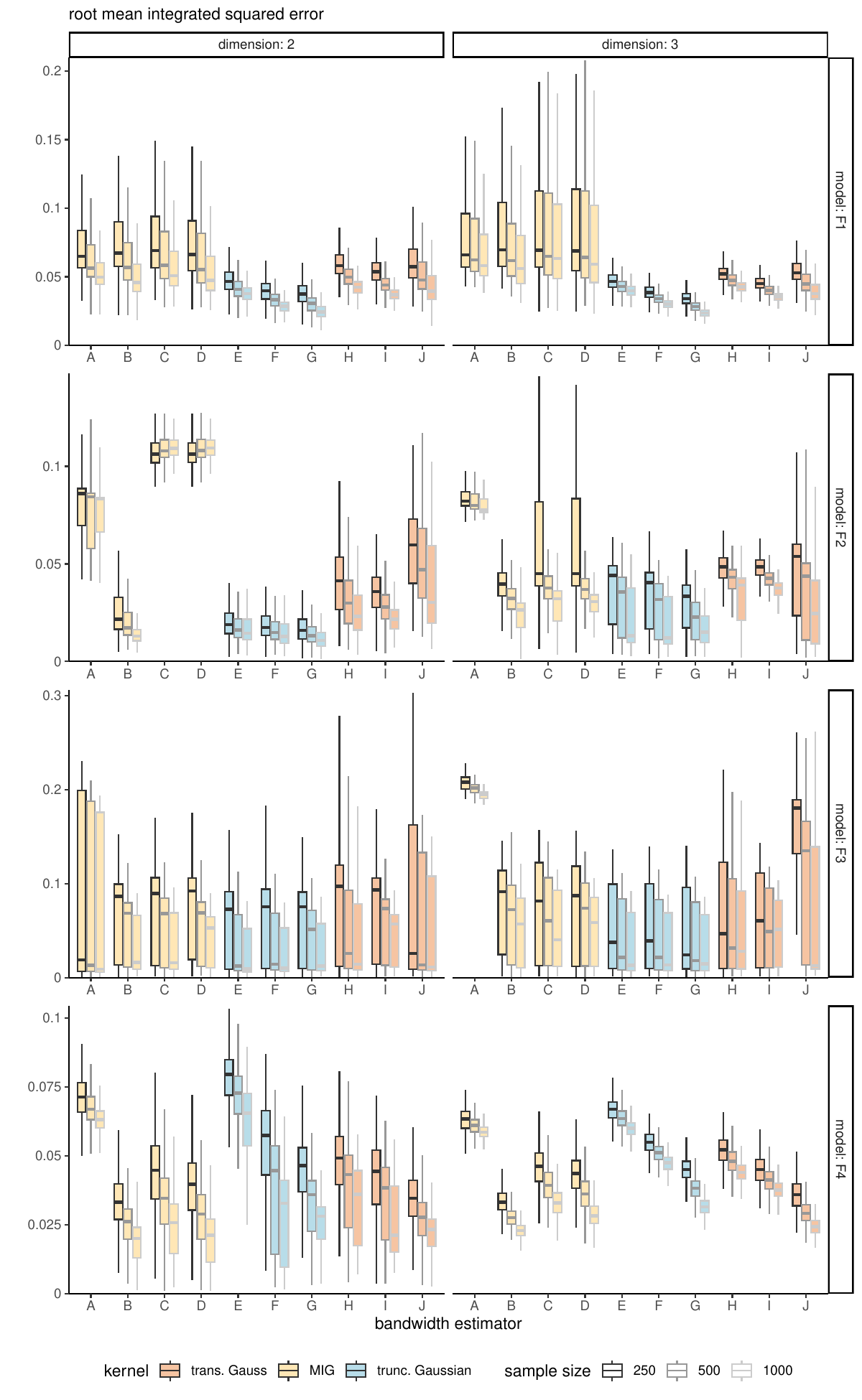}
\caption{Boxplots of $10^3$ RMISE measurements, estimated via numerical integration, for $d\in \{2,3\}$, $n\in \{250,500,1000\}$, and the combinations of estimators and selection methods ($\mathrm{A}$--$\mathrm{J}$) listed in Section~\ref{subsec:simulation.bandwidth.selection}.}
\label{fig:RMISE}
\end{figure}

\begin{figure}[!htp]
\centering
\includegraphics[width = 0.89\linewidth]{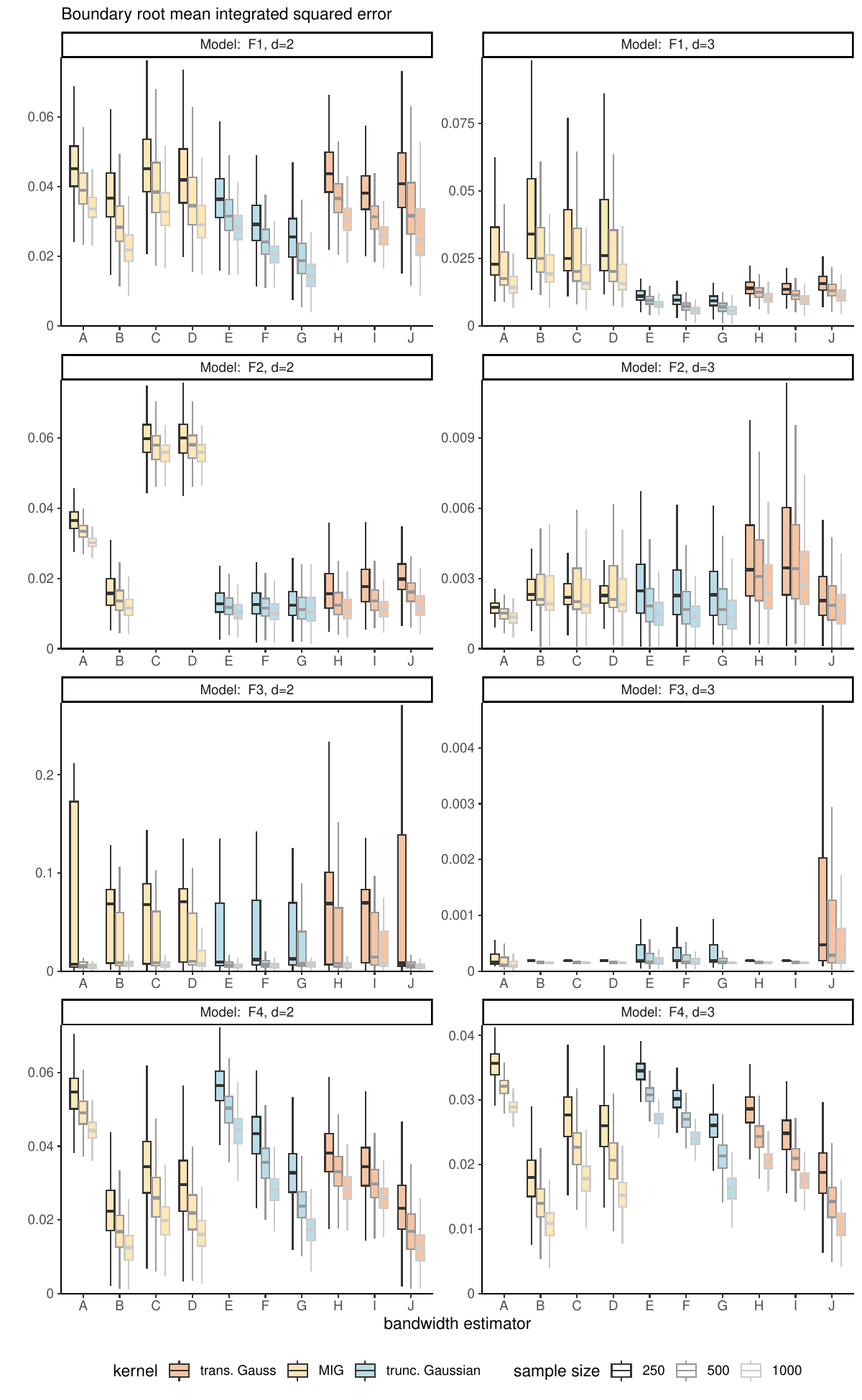}
\caption{Boxplots of $10^3$ BRMISE measurements, estimated via numerical integration restricted to the boundary region $\mathcal{B}(\bb{\beta})$, for $d\in \{2,3\}$, $n\in \{250,500,1000\}$, and the combinations of estimators and selection methods ($\mathrm{A}$--$\mathrm{J}$) listed in Section~\ref{subsec:simulation.bandwidth.selection}.}
\label{fig:BRMISE}
\end{figure}

\begin{figure}[!htp]
\centering
\includegraphics[width = 0.89\linewidth]{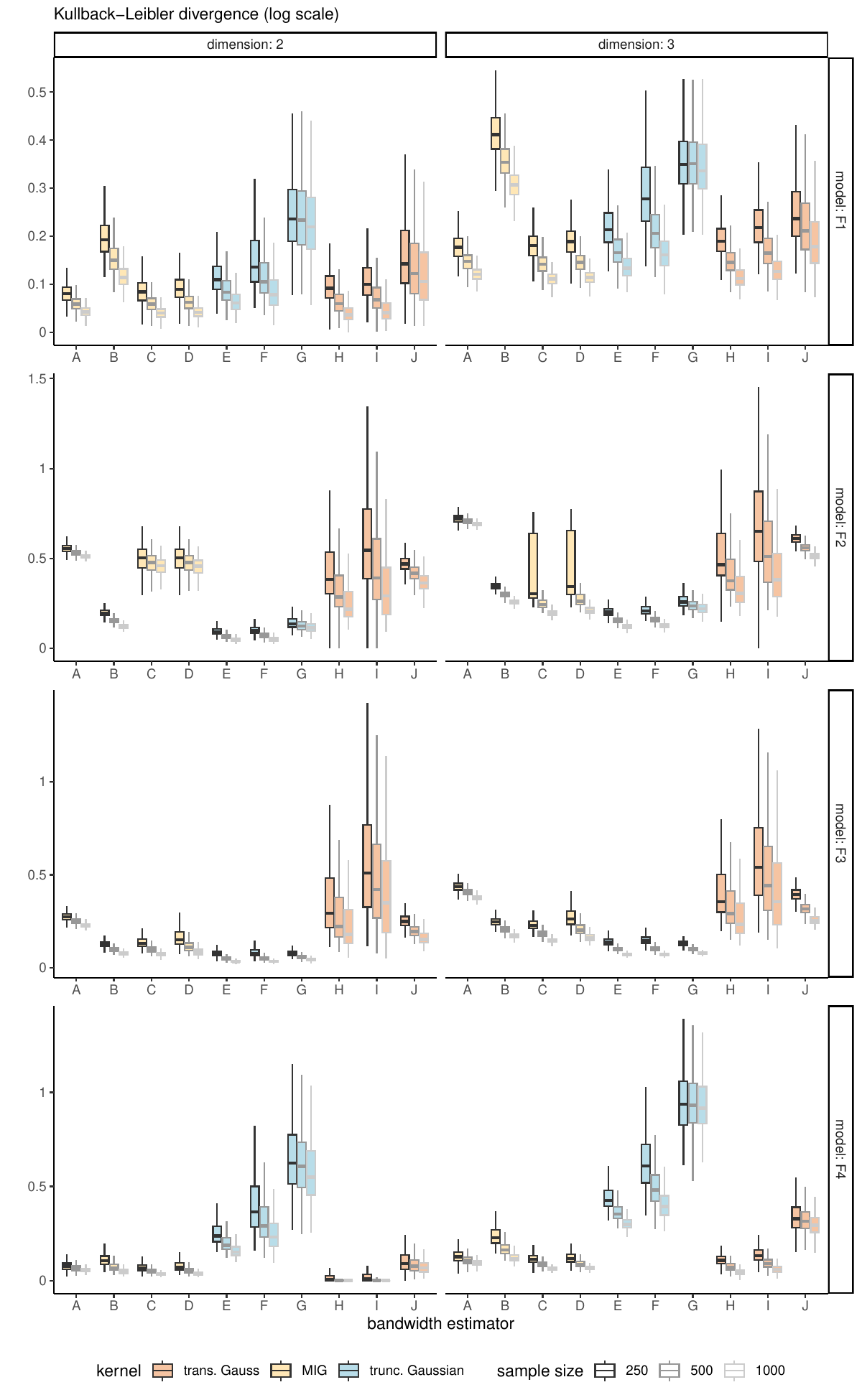}
\caption{Boxplots of $10^3$ log-transformed KLD measurements, $\max\{0,\ln(\mathsf{KLD}+1)\}$, estimated via Monte Carlo sampling from the target distributions, for $d\in \{2,3\}$, $n\in \{250,500,1000\}$, and the combinations of estimators and selection methods ($\mathrm{A}$--$\mathrm{J}$) listed in Section~\ref{subsec:simulation.bandwidth.selection}.}
\label{fig:KLD}
\end{figure}

\subsubsection{Discussion}\label{subsec:simulation.discussion}

This section outlines remarks on certain practical aspects of the computational complexity, numerical stability (robustness) and estimation times of the bandwidth selection procedures, in addition to other numerical considerations.

\begin{enumerate}
\item \textit{Complexity:} The LCV criterion is $\OO(n)$ and the number of bandwidth entries to optimize for a $d$-dimensional model is  $d(d+1)/2$, so $\OO(d^2)$. As expected, the spectral norm of the optimal bandwidth matrix increases with the dimension $d$ and decreases as the sample size $n$ grows. Using a diagonal bandwidth matrix reduces to $d$ the number of parameters to be estimated, and leads to gains in speed at the expense of poorer performance (higher MISE).

\item \textit{Robustness}: The bandwidth matrix estimation criteria (especially LSCV) which require Monte Carlo integration over the half-space for the bias term estimation are disturbingly sensitive to the choice of plug-in. For the Gaussian estimator for the rotated data (\texttt{hsgauss}; Appendix~\ref{app:R.code}), only a diagonal bandwidth was considered because the orthogonal transformation matrix yields uncorrelated components (except for the additional log-transformed radius). Preliminary tests (not reported) showed that using the full matrix yielded little improvement over this approach.

\item \textit{Estimation times}: Estimation of the bandwidth matrix Cholesky factor is fast, although the RLCV and LSCV criteria are about three times as slow as the LCV criterion due to the need to perform Monte Carlo integration. Using the kernel as plug-in for the target density $f_i$ in integrals can lead to a considerable speed-up because $n$ is typically small, at the expense of additional variability. For $d=3$, and a sample size $n=1000$, all calculation times were under a minute, with an average of, e.g., three seconds for the truncated Gaussian kernel density estimator with LCV.

\item \textit{Other considerations}: It is numerically challenging to evaluate the RMISE and BRMISE by numerical integration, and the problem becomes more challenging when $d$ increases as the mass of the integral becomes increasingly diffuse. Different numerical routines give answers that are sometimes orders of magnitude apart, so some care is needed. Preliminary rotation of the space is preferable, as integrating over $\R_{+} \times \R^{d-1}$ is easier.
\end{enumerate}

\begin{remark}
The MIG distribution becomes increasingly asymmetrical as its mode approaches the boundary of $\mathcal{H}(\bb{\beta})$; see Figure~\ref{fig:MIG.LCV.problems} for an illustration. This growing asymmetry affects the properties of the MIG kernel density estimator when the mode of the underlying data generating process lies on the boundary; in particular, the log-likelihood cross-validation score becomes fully driven by points close to the boundary, whereas Gaussian-based estimators are more affected by outliers and values far from the mode. If there are many observations very close to the boundary of the half-space, using a truncated Gaussian or truncated Student-$t$ kernel would be preferable and might lead to lower RMISE values.
\end{remark}

\begin{figure}[!t]
\centering
\includegraphics[width=\linewidth]{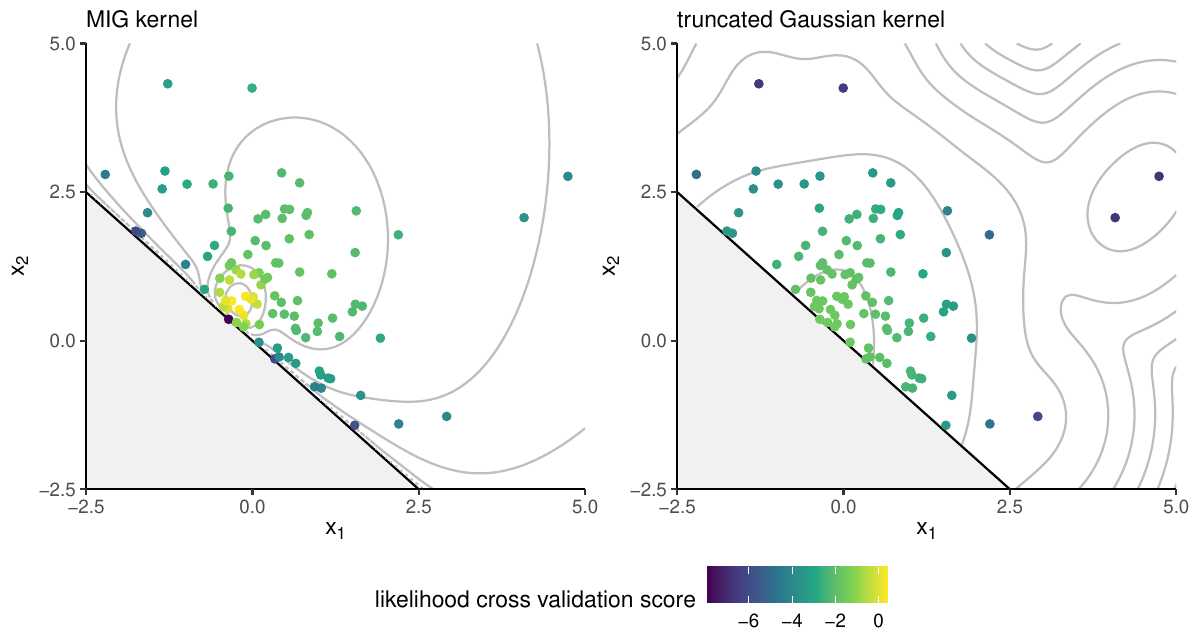}
\caption{Contour plot of MIG (left) and truncated Gaussian (right) kernel density estimators with bandwidth matrices chosen according to the likelihood cross validation (LCV) score based on a sample of size 100 truncated Student-$t$ with three degrees of freedom. The colors indicate the LCV scores of each data points: the lower their value, the more influential the observation in the bandwidth fit.}
\label{fig:MIG.LCV.problems}
\end{figure}

\subsection{Real-data application}\label{sec:data.application}

A geomagnetic storm is a major disturbance of Earth's magnetosphere due to a very efficient exchange of energy from the solar wind into the space environment surrounding Earth. Following \citet{WDCG}, consider data collected between 1957 and 2014 on the absolute magnitude of extreme geomagnetic storms lasting over 48 hours, measured in disturbance-storm time (dst).

A generalized Pareto distribution is fitted to exceedances above $u=220$ dst, with density given, for all $x \in (0, \infty)$, by
\[
f(x) = \sigma_u^{-1}(1 + \xi x/\sigma_u)_{+}^{-1/\xi-1}.
\]
Coupled with a maximum data information prior, $10^3$ independent draws from the posterior distribution of $(\sigma_u, \xi)$ were obtained using the generalized ratio-of-uniform method \citep{revdbayes,Wakefield:1991}. The bivariate posterior is supported on the (truncated) half-space
\[
\mathcal{H}_2 (1, m_n) = \{(\sigma_u, \xi)\in (0,\infty) \times \R : \sigma_u + \xi m_n > 0\}.
\]
where $m_n = \max(x_1, \ldots, x_n)$.

The left panel of Figure~\ref{fig:geomagnetic} shows the $10^3$ posterior draws used for estimating the kernel density. Given the small sample size, $n = 52$, the posterior distribution of the generalized Pareto is far from Gaussian, notwithstanding the support constraint. The panel shows both a scatterplot of the observations and contour curves for the MIG density with maximum-likelihood-estimated parameters.

The right panel of Figure~\ref{fig:geomagnetic} exhibits hexagonal bins of $10^5$ sample points simulated from the posterior distribution, along with contour curves based on the training posterior draws. These curves were obtained by computing the full bandwidth matrix that minimizes the LCV (black) and the AMISE (gray) using the MIG distribution to approximate the latter via Monte Carlo. Both bandwidths have correlations of roughly $-2/3$, which matches the sample correlation of the training set. However, the standard deviation in the shape direction ($\xi$) for the LCV bandwidth is roughly twice that of the AMISE.

\begin{figure}[t!]
\centering
\includegraphics[width = \linewidth]{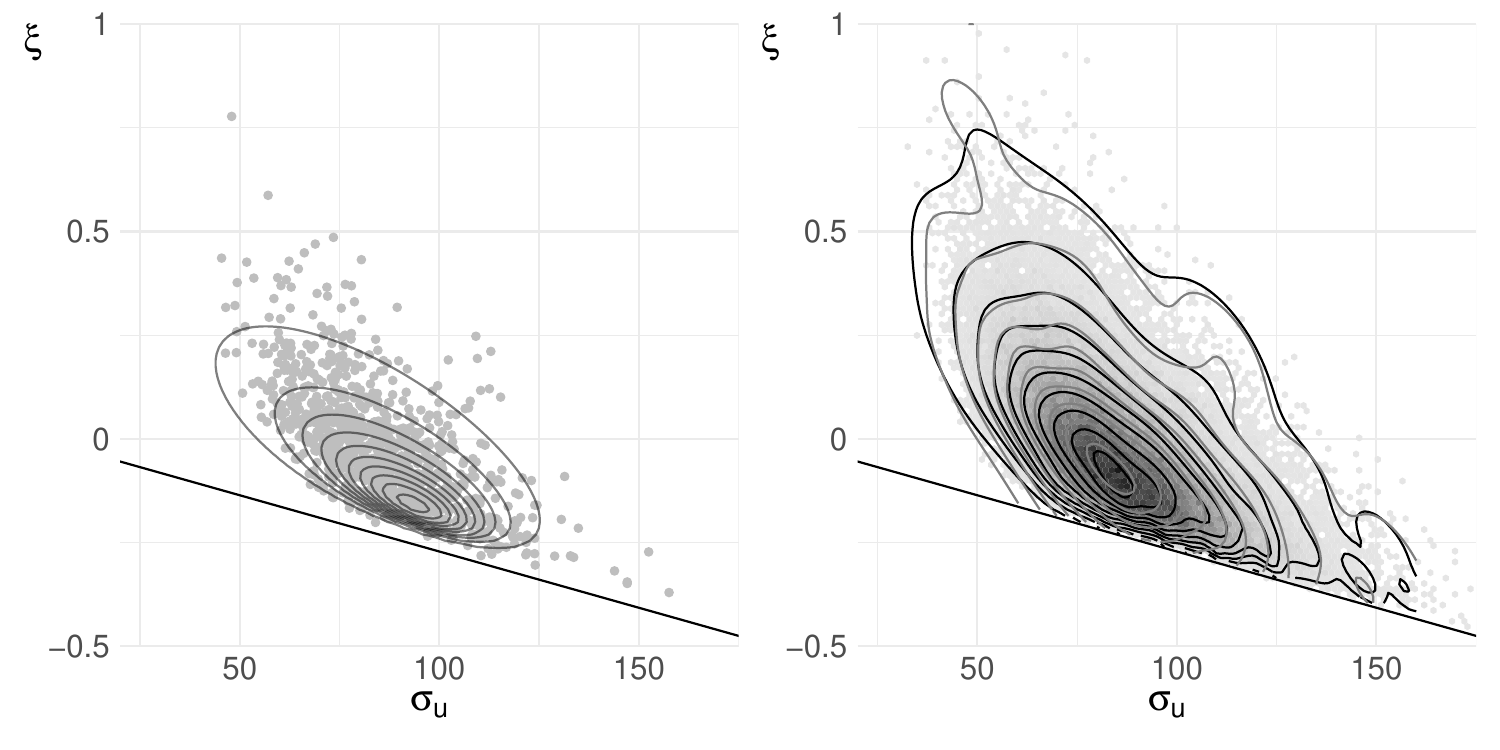}\vspace{-2mm}
\caption{Left: scatter plot of $10^3$ posterior sample points (training set) from a generalized Pareto model with scale $\sigma_u$ and shape $\xi$, along with posterior density contour curves based on an MIG distribution. Right: bivariate histogram with hexagonal bins obtained from $10^5$ posterior sample points generated using the generalized ratio-of-uniform method, with MIG kernel density estimator contour curves based on the LCV (black) and AMISE (gray), based on the training set.}\label{fig:geomagnetic}
\end{figure}

As one can see, the resulting density estimates faithfully reflect the distribution of the posterior draws. The MIG kernel density estimator is, by construction, more flexible than fixed parametric models such as the MIG density fitted in the left panel of Figure~\ref{fig:geomagnetic}; moreover, this result is achieved at a small cost. In more complex settings where exact sampling from the posterior is not available, kernel smoothing is a viable, cost-effective alternative that can lead to better approximations of the posterior density. While it is possible to use other density estimators, such as elliptical kernel density estimators truncated over the half-space, which can also generalize to include additional linear inequality constraints, the MIG kernel does not require estimation of the normalizing constant and naturally adapts to the geometry of the half-space due to its asymmetry.

\section{Exact simulation algorithm}\label{sec:rng.MIG}

This section presents a new algorithm for generating MIG random vectors that is faster and more accurate than the approach proposed by \cite{MR2019130}, who exploited the Brownian hitting location representation mentioned in Section~\ref{sec:intro}. A separation-of-variable algorithm is also proposed for efficient evaluation of the MIG cdf using sequential importance sampling; see Remark~\ref{rem:cdf.estimation}. Both methods rely on Theorem~1~(3) of \cite{MR2019130}.

\begin{proposition}[Sampling from the MIG distribution]\label{prop:exact.sim}
Let the vectors $\bb{\beta}\in \R^d$, $\bb{\xi}\in \mathcal{H}(\bb{\beta})$, and the positive definite matrix $\mat{\Omega}\in \mathcal{S}_{++}^d$ be given. Recall from Proposition~\ref{prop:MIG.stochastic.representation} the $d \times d$ projection matrix~$\mat{Q}$, the inverse Gaussian random variable $R$, and the $(d-1)$-vector $\bb{Z}$ which is Gaussian given $R = r$. Given that $\mat{Q}$ is invertible, one has $\mat{Q}^{-1}(R, \bb{Z}^{\top}){\vphantom{Z}}^{\top} \sim \mathrm{MIG}(\bb{\beta}, \bb{\xi}, \mat{\Omega})$.
\end{proposition}

The result of Proposition~\ref{prop:exact.sim} is a direct consequence of the stochastic representation of \citet{MR2019130} for the MIG distribution, stated in Proposition~\ref{prop:MIG.stochastic.representation}. The numerical implementation is also straightforward: (i) the inverse of $\mat{Q}$ can be computed using its singular value decomposition; (ii) an inverse Gaussian random variable $R$ is drawn using a dedicated algorithm; and, (iii) conditional on that draw, a $(d-1)$-dimensional Gaussian random vector $\bb{Z}$ is generated as in \eqref{eq:Z2}. Then, one sets $\bb{X} = \mat{Q}^{-1} (R, \bb{Z}^{\top}){\vphantom{Z}}^{\top} \sim \mathrm{MIG}(\bb{\beta}, \bb{\xi}, \mat{\Omega})$.

\begin{remark}
In the simulation study of Section~\ref{sec:simulation.study}, observations from the MIG distribution (viz. $F_4$) were generated by drawing inverse Gaussian random variables using the \textsf{R} package \texttt{statmod} \citep{Giner.Smyth:2016}.
\end{remark}

\begin{remark}
Compared with Minami's algorithm \citep[Theorem~2]{MR2019130}, which simulates the first-hitting location on a hyperplane of a $d$-variate Brownian motion with correlated components, the procedure described below Proposition~\ref{prop:exact.sim} offers two main advantages: it delivers exact samples and achieves significantly higher speed. Minami's approach is approximate because it relies on discretizing the Brownian paths, leading to slow computing times even in low dimension. In contrast, the exact algorithm involves inverting the covariance matrix $\mat{\Omega}$ and performing cross-products in quadratic forms, which yields a computational complexity of $\OO(d^3 + n\,d^2)$, linear in the sample size $n$.

Table~\ref{table.1} reports the mean computation time and corresponding standard deviation for generating $n = 100$ MIG random vectors with randomized parameters, across different dimensions $d$. Minami's Brownian-motion method was run with a time-discretization step of $10^{-3}$ to approximate first-hitting locations.

\begin{table}[!t]
\setlength{\tabcolsep}{6.3pt}
\caption{Mean time (standard deviation) in milliseconds over 25 replications for generating $n = 100$ observations from a MIG distribution with randomized parameters, as a function of $d$.}
\centering
\begin{tabular}[t]{llllll}
\toprule
Method & \multicolumn{1}{l}{$d=2$} & \multicolumn{1}{l}{$d=4$} & \multicolumn{1}{l}{$d=8$} & \multicolumn{1}{l}{$d=16$} & \multicolumn{1}{l}{$d=32$} \\
\midrule
Exact & 0.80 (0.27) & 1.31 (0.37) & 1.61 (0.36) & 1.98 (0.42) & 3.22 (0.40) \\
Minami & 259 (45) & 2275 (984) & 14228 (5024) & 38373 (21001) & 197823 (366493) \\
\bottomrule
\end{tabular}
\label{table.1}
\end{table}

The table shows that the exact generator is up to five orders of magnitude faster than Minami's Brownian-motion algorithm; the same pattern is visible in Figure~\ref{fig:rmig_timing}, which plots the mean times with 95\% confidence intervals and log-log least-squares lines.
\end{remark}

\begin{figure}[!b]
\centering
\includegraphics[width=1\textwidth]{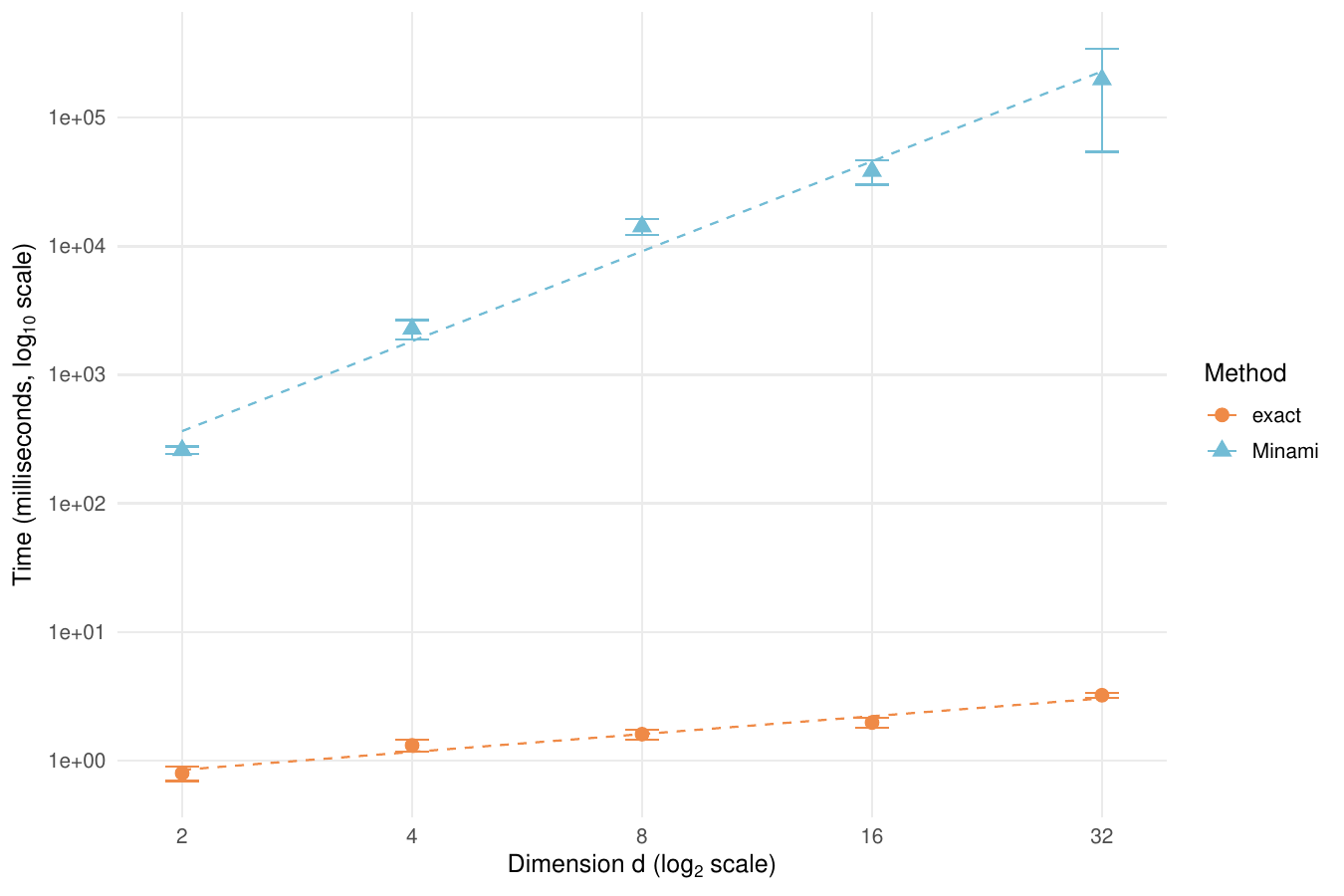} \\
\caption{Computation time for \texttt{rmig} generation versus dimension. Points show the mean run time across 25 repetitions, and vertical bars 95\% Wald-based confidence intervals for the mean. Dashed lines give ordinary least-squares fits in log-log scale. The exact algorithm is compared with Minami's Brownian motion method.}
\label{fig:rmig_timing}
\end{figure}

\begin{remark}\label{rem:cdf.estimation}
As simulation is cheap, the cdf of a MIG random vector can be readily estimated via Monte Carlo by drawing vectors from the corresponding model. It is tempting to leverage the same stochastic representation to evaluate $\Pr(\bb{X} \leq \bb{q})$, where $\bb{X} \sim \mathrm{MIG}(\bb{\beta}, \bb{\xi}, \mat{\Omega})$, using sequential importance sampling \citep[e.g.,][]{Genz.Bretz:2009} for the Gaussian component. Unfortunately, the linear transformation $\mat{Q}_2$ of the region of integration, $(-\bb{\infty}_d, \bb{q}] \cap \mathcal{H}(\bb{\beta})$, gives rise to a polytope and obtaining the transformed region of integration requires solving repeated linear programs, making the approach uncompetitive beyond the bivariate case, for which explicit expressions can be derived for the integration bounds. More details are provided in Appendix~\ref{app:cdf.evaluation}.
\end{remark}

\section{Conclusion and outlook}\label{sec:conclusion}

This paper introduced a new asymmetric kernel density estimator for data supported on half-spaces, using the MIG distribution of \citet{MR2019130} as the locally adaptive kernel. The proposed estimator, which intrinsically respects the geometry of half-spaces, addresses the well-known boundary bias problem from which traditional kernel estimators suffer. Several theoretical results were established for the estimator, including its asymptotic normality and an expression for its MISE, as well as local limit theorems and probability metric bounds for the MIG distribution which were instrumental in deriving the asymptotic variance. A thorough simulation study demonstrated the MIG kernel density estimator's good finite-sample performance under various configurations, alongside two new boundary-adapted competitors that we designed for comparison. A real-data application showcased its potential for use in a Bayesian setting.

While the proposed estimator shows competitive performance relative to its counterparts in the simulation study, it is not without limitations. First, although this paper considered bandwidth criterions such LCV, RLCV, LSCV, and AMISE minimization, some practical implementations could require more automated or data-driven selection methods that balance computational demands with statistical efficiency. For example, as the dimension $d$ grows, it is an open question to determine what kind of bandwidth matrix (scalar, diagonal, full) should be optimized and under what criterion. More sophisticated selection procedures could also be implemented, but then the computational cost always remains an issue, especially in high dimensions.

Second, establishing uniform strong consistency results on growing sequences of compacts and related large deviation bounds could provide deeper insight into the estimator's convergence behavior. This would be valuable for theoretical completeness and for guiding large-sample and high-dimensional applications.

Third, although half-spaces are common in areas such as constrained inference and machine learning, there is scope for extending asymmetric kernel methods to more complicated geometric domains, such as convex cones or non-compact manifolds; see, e.g, \citet{Kim_Richards_2008_worshop}, \cite{MR2838725}, \cite{MR3012414}, \cite{MR4358612}, \cite{MR4172886}, and \cite{MR4886483}.

Fourth, extending the asymmetric kernel approach presented herein to a nonparametric regression setting, where the explanatory variables are supported on half-spaces and the response is real, presents a promising avenue for future research. Such an extension would provide a natural and efficient way to model functional relationships while addressing boundary bias, which is a persistent issue in regression problems with constrained supports.

Fifth, for both the density estimation and regression settings, the assumptions of independent and identically distributed data could be relaxed to accommodate spatially or temporally correlated samples. A good place to start investigating the theoretical properties would be the book by \citet[Chapters~2--3]{MR1640691}. This step would broaden the applicability of the MIG kernel estimator to structured data problems, such as spatial geostatistics or time series analysis on half-spaces, where boundary constraints still play an important role.

By tackling these open problems, the new MIG kernel density estimator may be further refined and extended to a broader range of statistical applications. Although it already provides a flexible tool for half-space data modeling, further methodological and theoretical developments are needed to fully exploit its potential in high-dimensional or correlated-data contexts.

\begin{appendix}

\section{Proofs of the normal approximations}\label{app:proofs.normal.approximations}

\subsection{Proofs in the univariate case}\label{app:proofs.univariate}

\subsubsection{Proof of Theorem~\ref{thm:LLT}}

Let the positive reals $\mu,\lambda\in (0,\infty)$ and $x\in \smash{B_{\mu,\lambda}(\sqrt{\lambda/\mu})}$ be given. Recall the definition of $\delta_x$ in~\eqref{eq:CLT.univariate}, and observe that
\[
x\in \smash{B_{\mu,\lambda}(\sqrt{\lambda/\mu})} \quad \Rightarrow \quad |\delta_x| \sqrt{\frac{\mu}{\lambda}} < 1.
\]
From the expression of the IG and Gaussian densities in~\eqref{eq:IG.distribution.two.parameters} and \eqref{eq:normal.distribution}, respectively, and using the decomposition
\[
\frac{x}{\mu} = 1 + \delta_x \sqrt{\frac{\mu}{\lambda}},
\]
one has
\[
\begin{aligned}
\mathrm{LR}(x)
&= \frac{3}{2} \ln\left(\frac{\mu}{x}\right) + \frac{(x - \mu)^2}{2 \mu^3/\lambda} \left(1 - \frac{\mu}{x}\right) \\
&= -\frac{3}{2} \ln\left(1 + \delta_x \sqrt{\frac{\mu}{\lambda}}\right) + \frac{1}{2} \delta_x^2 \left\{1 - \left(1 + \delta_x \sqrt{\frac{\mu}{\lambda}}\right)^{-1}\right\}.
\end{aligned}
\]
By applying the following Taylor expansions, valid for $|y| < 1$,
\begin{equation}\label{eq:Taylor.expansions.univariate}
-\ln(1 + y) = \sum_{k=1}^{\infty} \frac{(-1)^k}{k} y^k, \qquad 1 - (1 + y)^{-1} = - \sum_{k=1}^{\infty} (-1)^k y^k,
\end{equation}
one obtains
\[
\begin{aligned}
\mathrm{LR}(x)
&= \frac{3}{2} \sum_{k=1}^{\infty} \frac{(-1)^k}{k} \left(\delta_x \sqrt{\frac{\mu}{\lambda}}\right)^k - \frac{1}{2} \delta_x^2 \sum_{k=1}^{\infty} (-1)^k \left(\delta_x \sqrt{\frac{\mu}{\lambda}}\right)^k \\
&= \sum_{k=1}^{\infty} (-1)^k \left(\frac{3}{2k} - \frac{\delta_x^2}{2}\right) \left(\delta_x \sqrt{\frac{\mu}{\lambda}}\right)^k,
\end{aligned}
\]
which proves~\eqref{eq:LLT.univariate.1}.

\newpage
One obtains~\eqref{eq:LLT.univariate.2} and~\eqref{eq:LLT.univariate.3} using the fact that the errors made when truncating the Taylor expansions in~\eqref{eq:Taylor.expansions.univariate} are uniform on the compact set $|y| \leq \tau \sqrt{\mu/\lambda}$ for any fixed real $\tau\in (0,\sqrt{\lambda/\mu})$.
This concludes the proof.

\subsubsection{Proof of Theorem~\ref{thm:total.variation}}

Given the relationships that exist between the different probability metrics in the statement of the theorem, see, e.g., \citet[p.~421]{doi:10.2307/1403865}, it is sufficient to establish the Hellinger distance bound.

Everywhere below, one writes $B = \overline{B_{\mu,\lambda}}(0.5 \sqrt{\lambda/\mu})$ for simplicity. Let $X\sim \PP_{\mu,\lambda}$. By the bound on the squared Hellinger distance found at the bottom of p.~726 of \cite{MR1922539}, one knows that
\begin{equation}\label{eq:first.bound.Hellinger.distance}
\mathfrak{H}^2(\PP_{\mu,\lambda},\QQ_{\mu,\lambda}) \leq 2 \, \PP(X\in B^{\hspace{0.2mm}\mathrm{c}}) + \EE\left[\ln\left\{\frac{\rd \PP_{\mu,\lambda}}{\rd \QQ_{\mu,\lambda}}(X)\right\} \, \ind_{\{X\in B\}}\right],
\end{equation}
where $B^{\hspace{0.2mm}\mathrm{c}}$ refers to the complement of $B$. Then, by applying Chebyshev's inequality together with the fact that $\Var(\delta_X) = 1$ in~\eqref{eq:CLT.univariate}, one finds that
\begin{equation}\label{eq:concentration.bound}
\PP(X\in B^{\hspace{0.2mm}\mathrm{c}}) = \PP\left(|\delta_X| > 0.5 \sqrt{\lambda/\mu}\right) \leq \frac{4\mu}{\lambda} \, .
\end{equation}
Also, by Theorem~\ref{thm:LLT} and the identity $\ind_{\{X\in B\}} = 1 - \ind_{\{X\in B^{\hspace{0.2mm}\mathrm{c}}\}}$, one has
\begin{equation}\label{eq:estimate.I.begin}
\begin{aligned}
\EE \left[ \ln\left\{\frac{\rd \PP_{\mu,\lambda}}{\rd \QQ_{\mu,\lambda}}(X)\right\} \, \ind_{\{X\in B\}} \right]
&= \sqrt{\frac{\mu}{\lambda}} \, \EE \left\{ \left(\frac{3 \delta_X}{2} - \frac{\delta_X^3}{2}\right) \ind_{\{X\in B^{\hspace{0.2mm}\mathrm{c}}\}} \right\} \\
&\qquad- \sqrt{\frac{\mu}{\lambda}}\, \EE\left(\frac{3 \delta_X}{2} - \frac{\delta_X^3}{2}\right) + \OO(\mu/\lambda).
\end{aligned}
\end{equation}
The second term on the right-hand side of~\eqref{eq:estimate.I.begin} is $\OO(\mu/\lambda)$ because $\EE (\delta_X) = 0$ by~\eqref{eq:esp.var} and one knows that $\EE (\delta_X^3) = 3 \sqrt{\mu/\lambda}$ from Section~15.4 of~\cite{MR1299979}. The first term on the right-hand side of~\eqref{eq:estimate.I.begin} is $\OO(\mu/\lambda)$ by Cauchy--Schwarz, \eqref{eq:concentration.bound}, and the fact that all finite moments of $\delta_X$ are $\OO(1)$ being asymptotically Gaussian. Indeed,
\[
\begin{aligned}
\left[\EE\left\{\left(\frac{3 \delta_X}{2} - \frac{\delta_X^3}{2}\right) \ind_{\{X\in B^{\hspace{0.2mm}\mathrm{c}}\}}\right\}\right]^{\hspace{0.2mm}2}
&\leq \EE\left\{\left(\frac{3 \delta_X}{2} - \frac{\delta_X^3}{2}\right)^2\right\} \times \PP(X\in B^{\hspace{0.2mm}\mathrm{c}}) \\
&\leq \OO(1) \times \frac{4\mu}{\lambda} = \OO(\mu/\lambda).
\end{aligned}
\]
Therefore,
\begin{equation}\label{eq:end}
\EE\left[\ln\left\{\frac{\rd \PP_{\mu,\lambda}}{\rd \QQ_{\mu,\lambda}}(X)\right\} \, \ind_{\{X\in B\}}\right] = \OO(\mu/\lambda).
\end{equation}
Combining the estimates~\eqref{eq:concentration.bound} and~\eqref{eq:end} into~\eqref{eq:first.bound.Hellinger.distance}, one can conclude that the squared Hellinger distance is of order $\OO(\mu/\lambda)$, as claimed.

\subsection{Proofs in the multivariate case}\label{app:proofs.multivariate}

\subsubsection{Proof of Theorem~\ref{thm:LLT.multivariate}}

Let $\mu,\omega\in (0,\infty)$, $\bb{\beta}\in \R^d$, $\bb{\xi}_0\in \mathcal{H}(\bb{\beta})$ and $\bb{x}\in \smash{B_{\mu,\omega,\bb{\beta},\bb{\xi}_0,\mat{\Omega}_0}(\sqrt{\mu/\omega})}$ be given. Recall the definition of $\bb{\delta}_{\bb{x}}$ in~\eqref{eq:CLT.multivariate}, and note that
\[
\bb{x}\in \smash{B_{\mu,\omega,\bb{\beta},\bb{\xi}_0,\mat{\Omega}_0}(\sqrt{\mu/\omega})}
\quad \Rightarrow \quad
\frac{\big|\bb{\beta}^{\top} \mat{\Omega}_0^{1/2} \bb{\delta}_{\bb{x}}\big|}{\sqrt{\bb{\beta}^{\top} \bb{\xi}_0}} \sqrt{\frac{\omega}{\mu}} < 1.
\]
From the expressions of the MIG and multivariate Gaussian densities in \eqref{eq:general.multivariate.density} and \eqref{eq:normal.distribution.multivariate} respectively, and using the decomposition
\[
\frac{\bb{\beta}^{\top} \bb{x}}{\mu \bb{\beta}^{\top} \bb{\xi}_0} = 1 + \frac{\bb{\beta}^{\top} \mat{\Omega}_0^{1/2} \bb{\delta}_{\bb{x}}}{\sqrt{\bb{\beta}^{\top} \bb{\xi}_0}} \sqrt{\frac{\omega}{\mu}},
\]
one finds that
\[
\begin{aligned}
\mathrm{LR}(\bb{x})
&= \left(\frac{d}{2} + 1\right) \ln\left(\frac{\mu \bb{\beta}^{\top} \bb{\xi}_0}{\bb{\beta}^{\top} \bb{x}}\right) \\
&\qquad+ \frac{1}{2} (\bb{x} - \mu\bb{\xi}_0)^{\top} (\mu\omega \bb{\beta}^{\top} \bb{\xi}_0)^{-1}\mat{\Omega}_0^{-1} (\bb{x} - \mu\bb{\xi}_0) \left(1 - \frac{\mu \bb{\beta}^{\top} \bb{\xi}_0}{\bb{\beta}^{\top} \bb{x}}\right) \\
&= -\left(\frac{d}{2} + 1\right) \ln\left(1 + \frac{\bb{\beta}^{\top} \mat{\Omega}_0^{1/2} \bb{\delta}_{\bb{x}}}{\sqrt{\bb{\beta}^{\top} \bb{\xi}_0}} \sqrt{\frac{\omega}{\mu}}\right) \\
&\qquad+ \frac{1}{2} \bb{\delta}_{\bb{x}}^{\top} \bb{\delta}_{\bb{x}} \left\{1 - \left(1 + \frac{\bb{\beta}^{\top} \mat{\Omega}_0^{1/2} \bb{\delta}_{\bb{x}}}{\sqrt{\bb{\beta}^{\top} \bb{\xi}_0}} \sqrt{\frac{\omega}{\mu}}\right)^{-1}\right\}\!.
\end{aligned}
\]
By applying the following Taylor expansions, valid for $|y| < 1$,
\begin{equation}\label{eq:Taylor.expansions.multivariate}
-\ln(1 + y) = \sum_{k=1}^{\infty} \frac{(-1)^k}{k} y^k, \qquad 1 - (1 + y)^{-1} = - \sum_{k=1}^{\infty} (-1)^k y^k,
\end{equation}
one obtains
\[
\begin{aligned}
\mathrm{LR}(\bb{x})
&= \left(\frac{d}{2} + 1\right) \sum_{k=1}^{\infty} \frac{(-1)^k}{k} \left(\frac{\bb{\beta}^{\top} \mat{\Omega}_0^{1/2} \bb{\delta}_{\bb{x}}}{\sqrt{\bb{\beta}^{\top} \bb{\xi}_0}} \sqrt{\frac{\omega}{\mu}}\right)^k \\
&\qquad- \frac{1}{2} \bb{\delta}_{\bb{x}}^{\top} \bb{\delta}_{\bb{x}} \sum_{k=1}^{\infty} (-1)^k \left(\frac{\bb{\beta}^{\top} \mat{\Omega}_0^{1/2} \bb{\delta}_{\bb{x}}}{\sqrt{\bb{\beta}^{\top} \bb{\xi}_0}} \sqrt{\frac{\omega}{\mu}}\right)^k \\
&= \sum_{k=1}^{\infty} (-1)^k \left(\frac{d + 2}{2 k} - \frac{\bb{\delta}_{\bb{x}}^{\top} \bb{\delta}_{\bb{x}}}{2}\right) \left(\frac{\bb{\beta}^{\top} \mat{\Omega}_0^{1/2} \bb{\delta}_{\bb{x}}}{\sqrt{\bb{\beta}^{\top} \bb{\xi}_0}} \sqrt{\frac{\omega}{\mu}}\right)^k,
\end{aligned}
\]
which proves~\eqref{eq:LLT.multivariate.1}.

One obtains~\eqref{eq:LLT.multivariate.2} and~\eqref{eq:LLT.multivariate.3} using the fact that the errors made when truncating the Taylor expansions in~\eqref{eq:Taylor.expansions.multivariate} are uniform on the compact set $|y| \leq \tau \sqrt{\omega/\mu}$ for any fixed real $\tau\in (0, \sqrt{\mu/\omega})$.
This concludes the proof.

\subsubsection{Proof of Theorem~\ref{thm:total.variation.multivariate}}

Given the relationships that exist between the different probability metrics in the statement of the theorem, see, e.g., \citet[p.~421]{doi:10.2307/1403865}, it is sufficient to establish the Hellinger distance bound.

Everywhere below, one writes $B = B_{\mu, \omega, \bb{\beta}, \bb{\xi}_0, \mat{\Omega}_0}(0.5 \sqrt{\mu/\omega})$ for simplicity. Let $\bb{X}\sim \PP_{\mu, \omega, \bb{\beta}, \bb{\xi}_0, \mat{\Omega}_0}$. By the bound on the squared Hellinger distance found at the bottom of p.~726 of~\cite{MR1922539}, one knows that
\begin{equation}\label{eq:first.bound.Hellinger.distance.multivariate}
\begin{aligned}
&\mathfrak{H}^2(\PP_{\mu, \omega, \bb{\beta}, \bb{\xi}_0, \mat{\Omega}_0},\QQ_{\mu, \omega,\bb{\beta}, \bb{\xi}_0, \mat{\Omega}_0}) \\
&\qquad\leq 2 \, \PP(\bb{X}\in B^{\hspace{0.2mm}\mathrm{c}}) + \EE\left[\ln\left\{\frac{\rd \PP_{\mu, \omega, \bb{\beta}, \bb{\xi}_0, \mat{\Omega}_0}}{\rd \QQ_{\mu, \omega, \bb{\beta}, \bb{\xi}_0, \mat{\Omega}_0}}(\bb{X})\right\} \, \ind_{\{\bb{X}\in B\}}\right].
\end{aligned}
\end{equation}

By a union bound on the summands of the quantity $\bb{\beta}^{\top} \mat{\Omega}_0 \bb{\delta}_{\bb{X}}$, followed by an application of Chebyshev's inequality for each summand and the exploitation of the fact that the margins of the MIG distribution are asymptotically Gaussian with mean $\mu \xi_{0,j}$ and variance proportional to $\mu\omega$ by \eqref{eq:CLT.multivariate}, one deduces that
\begin{equation}\label{eq:concentration.bound.multivariate}
\begin{aligned}
&\PP(\bb{X}\in B^{\hspace{0.2mm}\mathrm{c}}) \\
&\quad= \PP\left(\left|\bb{\beta}^{\top} \mat{\Omega}_0 \bb{\delta}_{\bb{X}}\right| > 0.5 \sqrt{\frac{\mu}{\omega}}\sqrt{\bb{\beta}^{\top} \bb{\xi}_0}\right) \\
&\quad\leq \sum_{i=1}^d \PP\left\{\left|(\bb{\beta}^{\top} \mat{\Omega}_0)_i (\bb{\delta}_{\bb{X}})_i\right| > \frac{0.5}{d} \sqrt{\frac{\mu}{\omega}}\sqrt{\bb{\beta}^{\top} \bb{\xi}_0}\right\} \\
&\quad\leq \sum_{i=1}^d \sum_{j=1}^d \PP\left\{\left|(\bb{\beta}^{\top} \mat{\Omega}_0)_i (\mat{\Omega}_0^{-1/2})_{ij} \frac{(X_j - \mu \bb{\xi}_{0,j})}{\sqrt{\mu\omega \bb{\beta}^{\top} \bb{\xi}_0}}\right| > \frac{0.5}{d^{\hspace{0.2mm}2}} \sqrt{\frac{\mu}{\omega}}\sqrt{\bb{\beta}^{\top} \bb{\xi}_0}\right\} \\
&\quad= \OO_{\bb{\beta},\bb{\xi}_0,\mat{\Omega}_0}(\omega/\mu).
\end{aligned}
\end{equation}
Also, by Theorem~\ref{thm:LLT.multivariate} and the identity $\ind_{\{\bb{X}\in B\}} = 1 - \ind_{\{\bb{X}\in B^{\hspace{0.2mm}\mathrm{c}}\}}$, one has
\begin{equation}\label{eq:estimate.I.begin.multivariate}
\begin{aligned}
&\EE\left[\ln\left\{\frac{\rd \PP_{\mu, \omega,\bb{\beta}, \bb{\xi}_0, \mat{\Omega}_0}}{\rd \QQ_{\mu,\omega, \bb{\beta}, \bb{\xi}_0, \mat{\Omega}_0}}(\bb{X})\right\} \, \ind_{\{\bb{X}\in B\}}\right] \\[1mm]
&\qquad= \sqrt{\frac{\omega}{\mu}} \frac{\bb{\beta}^{\top} \mat{\Omega}_0^{1/2}}{\sqrt{\bb{\beta}^{\top} \bb{\xi}_0}} \EE\left[\left\{\frac{(d + 2) \bb{\delta}_{\bb{X}}}{2} - \frac{\bb{\delta}_{\bb{X}} \bb{\delta}_{\bb{X}}^{\top} \bb{\delta}_{\bb{X}}}{2}\right\} \ind_{\{\bb{X}\in B^{\hspace{0.2mm}\mathrm{c}}\}}\right] \\
&\qquad\qquad- \sqrt{\frac{\omega}{\mu}} \frac{\bb{\beta}^{\top} \mat{\Omega}_0^{1/2}}{\sqrt{\bb{\beta}^{\top} \bb{\xi}_0}} \EE\left[\left\{\frac{(d + 2) \bb{\delta}_{\bb{X}}}{2} - \frac{\bb{\delta}_{\bb{X}} \bb{\delta}_{\bb{X}}^{\top} \bb{\delta}_{\bb{X}}}{2}\right\}\right] \\
&\qquad\qquad+ \OO_{\bb{\beta},\bb{\xi}_0,\mat{\Omega}_0}(\omega/\mu).
\end{aligned}
\end{equation}

The second term on the right-hand side of~\eqref{eq:estimate.I.begin.multivariate} is $\OO_{\bb{\beta},\bb{\xi}_0,\mat{\Omega}_0}(\omega/\mu)$ because $\EE (\bb{\delta}_{\bb{X}}) = \bb{0}_d$ by~\eqref{eq:esp.var.multivariate}, and using the fact that $\EE (\delta_{x,i} \delta_{x,j} \delta_{x,k}) = \smash{\OO_{\bb{\beta},\bb{\xi}_0,\mat{\Omega}}(\sqrt{\omega/\mu})}$ for all $i,j,k\in \{1,\ldots,d\}$, which can be deduced from Property~1 of~\cite{MR2019130}. The first term on the right-hand side of~\eqref{eq:estimate.I.begin.multivariate} is $\OO_{\bb{\beta},\bb{\xi}_0,\mat{\Omega}_0}(\omega/\mu)$ by Cauchy--Schwarz, Eq.~\eqref{eq:concentration.bound.multivariate} and the fact that all finite joint moments of the components of $\bb{\delta}_{\bb{X}}$ are $\OO_{\bb{\beta},\bb{\xi}_0,\mat{\Omega}_0}(1)$ being asymptotically Gaussian. Indeed,
\[
\begin{aligned}
&\left(\EE\left[\left\{\frac{(d + 2) \bb{\delta}_{\bb{X}}}{2} - \frac{\bb{\delta}_{\bb{X}} \bb{\delta}_{\bb{X}}^{\top} \bb{\delta}_{\bb{X}}}{2}\right\} \ind_{\{\bb{X}\in B^{\hspace{0.2mm}\mathrm{c}}\}}\right]\right)^2 \\
&\qquad\qquad\leq \EE\left[\left\{\frac{(d + 2) \bb{\delta}_{\bb{X}}}{2} - \frac{\bb{\delta}_{\bb{X}} \bb{\delta}_{\bb{X}}^{\top} \bb{\delta}_{\bb{X}}}{2}\right\}^2\right] \times \PP\left(\bb{X}\in B^{\hspace{0.2mm}\mathrm{c}}\right) \\
&\qquad\qquad= \OO_{\bb{\beta},\bb{\xi}_0,\mat{\Omega}_0}(1) \times \OO_{\bb{\beta},\bb{\xi}_0,\mat{\Omega}_0}(\omega/\mu) \\[1mm]
&\qquad\qquad= \OO_{\bb{\beta},\bb{\xi}_0,\mat{\Omega}_0}(\omega/\mu).
\end{aligned}
\]
Therefore,
\begin{equation}\label{eq:end.multivariate}
\EE\left[\ln\left\{\frac{\rd \PP_{\mu, \omega,\bb{\beta}, \bb{\xi}_0, \mat{\Omega}_0}}{\rd \QQ_{\mu,\omega, \bb{\beta}, \bb{\xi}_0, \mat{\Omega}_0}}(\bb{X})\right\} \, \ind_{\{\bb{X}\in B\}}\right] = \OO_{\bb{\beta},\bb{\xi}_0,\mat{\Omega}_0}(\omega/\mu).
\end{equation}
Combining the estimates from~\eqref{eq:concentration.bound.multivariate} and~\eqref{eq:end.multivariate} into~\eqref{eq:first.bound.Hellinger.distance.multivariate}, one can conclude that the squared Hellinger distance is of order $\OO_{\bb{\beta},\bb{\xi}_0,\mat{\Omega}_0}(\omega/\mu)$, as claimed.

\section{Proofs of the asymptotics for the MIG kernel density estimator}\label{app:proofs.application.asymptotics}

\subsection{Proof of Proposition~\ref{prop:asymptotic.bias}}

For any vectors $\bb{\beta}\in \R^d$, $\bb{\xi}\in \mathcal{H}(\bb{\beta})$, and any positive definite matrix $\mat{H}\in \mathcal{S}_{++}^d$, define the random vector
\[
\bb{Y}_{\!\bb{\xi}} = (Y_1,\ldots,Y_d) \sim \mathrm{MIG}(\bb{\beta},\bb{\xi},\mat{H}).
\]
As in \eqref{eq:esp.var.multivariate}, note that, for all $i,j\in \{1,\ldots,d\}$,
\begin{equation}\label{eq:expectation.covariance.explicit.estimate}
\EE(Y_i) = \xi_i, \qquad \EE\{(Y_i - \xi_i) (Y_j - \xi_j)\} = \Cov(Y_i,Y_j) = \bb{\beta}^{\top} \bb{\xi} \, \mat{H}_{ij}.
\end{equation}
By a second order stochastic mean value theorem \citep[Theorem~18.18]{MR2378491}, one has
\begin{equation} \label{eq:second.order.MVT}
\begin{aligned}
&f(\bb{Y}_{\!\bb{\xi}}) - f(\bb{\xi}) \\
&\quad= \sum_{i=1}^d (Y_i - \xi_i) \frac{\partial}{\partial \xi_i} f(\bb{\xi}) + \frac{1}{2} \sum_{i,j=1}^d (Y_i - \xi_i) (Y_j - \xi_j) \frac{\partial^2}{\partial \xi_i \partial \xi_j} f(\bb{\xi}) \\
&\qquad+ \frac{1}{2} \sum_{i,j=1}^d (Y_i - \xi_i) (Y_j - \xi_j) \left\{\frac{\partial^2}{\partial \xi_i \partial \xi_j} f(\bb{\zeta}_{\bb{\xi}}) - \frac{\partial^2}{\partial \xi_i \partial \xi_j} f(\bb{\xi})\right\},
\end{aligned}
\end{equation}
for some random vector $\bb{\zeta}_{\bb{\xi}}\in \mathcal{H}(\bb{\beta})$ on the line segment joining $\bb{Y}_{\!\bb{\xi}}$ and $\bb{\xi}$. Given that it was assumed that the second order partial derivatives of $f$ are uniformly continuous on the half-space $\mathcal{H}(\bb{\beta})$, then for any given $\e\in (0,\infty)$, there exists a real number $\delta_{d,\bb{\beta},\e}\in (0,1]$ such that, uniformly for $\bb{\xi},\bb{\xi}'\in \mathcal{H}(\bb{\beta})$,
\[
\|\bb{\xi}' - \bb{\xi}\|_1 \leq \delta_{d,\bb{\beta},\e} \quad \Rightarrow \quad \left|\frac{\partial^2}{\partial \xi_i \partial \xi_j} f(\bb{\xi}') - \frac{\partial^2}{\partial \xi_i \partial \xi_j} f(\bb{\xi})\right| < \e.
\]
Therefore, by fixing $\e\in (0,\infty)$, taking the expectation on both sides of \eqref{eq:second.order.MVT}, and then using~\eqref{eq:expectation.covariance.explicit.estimate}, one can bound the recentered bias as follows:
\begin{align}\label{eq:prop.Berntein.decomposition}
&\Bigg|\Bias\big\{\hat{f}_{n,\mat{H}}(\bb{\xi})\big\} - \frac{1}{2} \bb{\beta}^{\top} \bb{\xi} \sum_{i,j=1}^d \mat{H}_{ij} \frac{\partial^2}{\partial \xi_i \partial \xi_j} f(\bb{\xi})\Bigg| \notag \\
&= \Bigg|\frac{1}{2} \sum_{i,j=1}^d \EE\{(Y_i - \xi_i) (Y_j - \xi_j)\} \left\{\frac{\partial^2}{\partial \xi_i \partial \xi_j} f(\bb{\zeta}_{\bb{\xi}}) - \frac{\partial^2}{\partial \xi_i \partial \xi_j} f(\bb{\xi})\right\}\Bigg| \notag \\
&\leq \frac{1}{2} \sum_{i,j=1}^d \EE\left[|Y_i - \xi_i| |Y_j - \xi_j| \, \Big|\tfrac{\partial^2}{\partial \xi_i \partial \xi_j} f(\bb{\zeta}_{\bb{\xi}}) - \tfrac{\partial^2}{\partial \xi_i \partial \xi_j} f(\bb{\xi})\Big| \ind_{\{\|\bb{Y}_{\!\bb{\xi}} - \bb{\xi}\|_1 \leq \delta_{d,\bb{\beta},\e}\}}\right] \notag \\
&\quad+ \frac{1}{2} \sum_{i,j=1}^d \EE\left[|Y_i - \xi_i| |Y_j - \xi_j| \, \Big|\tfrac{\partial^2}{\partial \xi_i \partial \xi_j} f(\bb{\zeta}_{\bb{\xi}}) - \tfrac{\partial^2}{\partial \xi_i \partial \xi_j} f(\bb{\xi})\Big| \ind_{\{\|\bb{Y}_{\!\bb{\xi}} - \bb{\xi}\|_1 > \delta_{d,\bb{\beta},\e}\}}\right] \notag \\
&\equiv \Delta_1 + \Delta_2.
\end{align}

If one applies the uniform continuity of the second order partial derivatives together with the fact that $\|\bb{\zeta}_{\bb{\xi}} - \bb{\xi}\|_1 \leq \|\bb{Y}_{\!\bb{\xi}} - \bb{\xi}\|_1$, followed by the Cauchy--Schwarz inequality, then~\eqref{eq:expectation.covariance.explicit.estimate}, and then Jensen's inequality, one gets
\begin{equation*} 
\begin{aligned}
\Delta_1
&\leq \frac{\e}{2} \sum_{i,j=1}^d \sqrt{\EE(|Y_i - \xi_i|^2)} \sqrt{\EE(|Y_j - \xi_j|^2)} = \frac{\e}{2} \bb{\beta}^{\top} \bb{\xi} \left\{\sum_{i=1}^d \sqrt{\mat{H}_{ii}}\right\}^2 \\
&\leq \frac{\e}{2} \bb{\beta}^{\top} \bb{\xi} \left\{d \sum_{i=1}^d \mat{H}_{ii}\right\} \leq \frac{\e \, d^{\hspace{0.2mm}2}}{2} \bb{\beta}^{\top} \bb{\xi} \, \|\mat{H}\|_2.
\end{aligned}
\end{equation*}

Next, the second order partial derivatives were also assumed to be bounded, say by $M_d\in (0,\infty)$.
Therefore, applying the bound $M_d$ on the two second order partial derivatives in $\Delta_2$, followed by a moment version of Chernoff's inequality, and then using the elementary inequality $x \leq \exp(x)$, one finds
\[
\begin{aligned}
\Delta_2
&\leq \frac{2 M_d}{2} \, \EE\left(\|\bb{Y}_{\!\bb{\xi}} - \bb{\xi}\|_1^2 \, \ind_{\{\|\bb{Y}_{\!\bb{\xi}} - \bb{\xi}\|_1 > \delta_{d,\bb{\beta},\e}\}}\right) \\
&\leq M_d \, \inf_{s\in (0,\infty)} \exp(-s \delta_{d,\bb{\beta},\e}) \EE\left\{\|\bb{Y}_{\!\bb{\xi}} - \bb{\xi}\|_1^2 \exp(s \|\bb{Y}_{\!\bb{\xi}} - \bb{\xi}\|_1)\right\} \\
&\leq M_d \, \inf_{s\in (0,\infty)} \exp(-s \delta_{d,\bb{\beta},\e}) \EE\left[\exp\{(2 + s) \|\bb{Y}_{\!\bb{\xi}} - \bb{\xi}\|_1\}\right] \\
&\leq e^2 \, M_d \, \inf_{t\in (2,\infty)} \exp(-t \delta_{d,\bb{\beta},\e}) \EE\left[\exp\{t \|\bb{Y}_{\!\bb{\xi}} - \bb{\xi}\|_1\}\right].
\end{aligned}
\]
Let $\bb{Z}\sim \mathcal{N}_d(\bb{0}_d,\bb{\beta}^{\top}\bb{\xi} \, \mat{H})$. By the asymptotic normality in \eqref{eq:CLT.multivariate} and the continuous mapping theorem, one obtains
\[
\Delta_2 = \OO\left[\inf_{t\in (2,\infty)} \exp(-t \delta_{d,\bb{\beta},\e}) \EE\left\{\exp(t \|\bb{Z}\|_1)\right\}\right].
\]
By optimizing this Chernoff bound, one has, say, $\Delta_2 = \OO(\|\mat{H}\|_2^{100})$ provided that the sequences $\e = \e(\mat{H})\in (0,\infty)$ and $\delta_{d,\bb{\beta},\e} = \delta_{d,\bb{\beta},\e}(\mat{H})\in (0,1]$ are chosen to converge slowly enough to $0$ as $\|\mat{H}\|_2\to 0$.

It follows that $\Delta_1 + \Delta_2$ in~\eqref{eq:prop.Berntein.decomposition} is $\oo(\bb{\beta}^{\top}\bb{\xi} \|\mat{H}\|_2)$. This concludes the proof.

\subsection{Proof of Proposition~\ref{prop:asymptotic.variance}}

Given that the target density $f$ is assumed to be bounded, the supremum norm $\|f\|_{\infty}$ is finite. Simple calculations show that
\[
\begin{aligned}
\Var\big\{\hat{f}_{n,\mat{H}}(\bb{\xi})\big\}
&= n^{-1} \, \EE\big\{k_{\bb{\beta},\bb{\xi}, \mat{H}}(\bb{X})^2\big\} - n^{-1} \left[\EE\big\{k_{\bb{\beta},\bb{\xi}, \mat{H}}(\bb{X})\big\}\right]^{\hspace{0.2mm}2} \\
&= n^{-1} \, \EE\big\{k_{\bb{\beta},\bb{\xi}, \mat{H}}(\bb{X})^2\big\} - \OO(n^{-1} \|f\|_{\infty}^2).
\end{aligned}
\]
One deduces from the multivariate normal local approximation in Theorem~\ref{thm:LLT.multivariate} that
\[
\begin{aligned}
&\EE\big\{k_{\bb{\beta},\bb{\xi}, \mat{H}}(\bb{X})^2\big\} \\
&\quad= \int_{\mathcal{H}(\bb{\beta})} \left[\frac{\exp\big\{-\frac{1}{2} (\bb{x} - \bb{\xi})^{\top} (\bb{\beta}^{\top} \bb{\xi} \, \mat{H})^{-1} (\bb{x} - \bb{\xi})\big\}}{(2\pi)^{d/2} |\bb{\beta}^{\top} \bb{\xi} \, \mat{H}|^{1/2}}\right]^{\hspace{0.2mm}2} f(\bb{x}) \, \rd \bb{x} + \oo_{\bb{\beta},\bb{\xi}}(1) \\
&\quad= \frac{\{f(\bb{\xi}) + \OO_{\bb{\beta},\bb{\xi}}(\|\mat{H}\|_2)\}}{(2\pi)^{d/2} |2 \bb{\beta}^{\top} \bb{\xi} \, \mat{H}|^{1/2}} \int_{\mathcal{H}(\bb{\beta})} \frac{\exp\big\{-\frac{1}{2} (\bb{x} - \bb{\xi})^{\top} (\tfrac{1}{2} \bb{\beta}^{\top} \bb{\xi} \, \mat{H})^{-1} (\bb{x} - \bb{\xi})\big\}}{(2\pi)^{d/2} |\tfrac{1}{2} \bb{\beta}^{\top} \bb{\xi} \, \mat{H}|^{1/2}} \rd \bb{x} \\
&\quad\qquad+ \oo_{\bb{\beta},\bb{\xi}}(1) \\[0.5mm]
&\quad= \frac{\{f(\bb{\xi}) + \OO_{\bb{\beta},\bb{\xi}}(\|\mat{H}\|_2)\}}{(4\pi \bb{\beta}^{\top} \bb{\xi})^{d/2} |\mat{H}|^{1/2}} \{1 + \oo_{\bb{\beta},\bb{\xi}}(1)\} + \oo_{\bb{\beta},\bb{\xi}}(1).
\end{aligned}
\]
The conclusion follows.

\subsection{Proof of Theorem~\ref{thm:asymp.normality}}
\label{sec:B3}

First note that
\begin{equation*} 
\hat{f}_{n,\mat{H}}(\bb{\xi}) - \EE\big\{\hat{f}_{n,\mat{H}}(\bb{\xi})\big\} = \frac{1}{n} \sum_{i=1}^n Z_{i,\mat{H}}(\bb{\xi}),
\end{equation*}
where
\[
Z_{i,\mat{H}}(\bb{\xi}) = k_{\bb{\beta},\bb{\xi},\mat{H}}(\bb{X}_i) - \EE\big\{k_{\bb{\beta},\bb{\xi},\mat{H}}(\bb{X}_i)\big\}.
\]
Like the observations $\bb{X}_1,\ldots,\bb{X}_n$, the random variables $Z_{1,\mat{H}}(\bb{\xi}),\ldots,Z_{n,\mat{H}}(\bb{\xi})$ are independent and identically distributed.

Calling on the material in, e.g., Section 1.9.3 of~\cite{MR595165}, the asymptotic normality of $n^{1/2} |\mat{H}|^{1/4} [\hat{f}_{n,\mat{H}}(\bb{\xi}) - \EE\{\hat{f}_{n,\mat{H}}(\bb{\xi})\}]$ will be proved if one verifies the following Lindeberg condition for double arrays: for every real $\e\in (0,\infty)$,
\begin{equation}\label{eq:pre.asymp.normality.Lindeberg.condition}
\lim_{n\to \infty} \frac{1}{s_{\mat{H}}^2} \EE\left[|Z_{1,\mat{H}}(\bb{\xi})|^2 \, \ind_{\{|Z_{1,\mat{H}}(\bb{\xi})| > \e n^{1/2} s_{\mat{H}}\}}\right] = 0,
\end{equation}
where $s_{\mat{H}}^2 = \EE\{|Z_{1,\mat{H}}(\bb{\xi})|^2\}$ and $\|\mat{H}\|_2\to 0$ as $n\to \infty$.

The condition~\eqref{eq:pre.asymp.normality.Lindeberg.condition} is verified below.
From the uniform bound on $k_{\bb{\beta},\bb{\xi},\mat{H}}$ in Lemma~\ref{lem:uniform.bound}, it is known that
\[
|Z_{1,\mat{H}}(\bb{\xi})| = \OO_{\bb{\beta},\bb{\xi}}\big(|\mat{H}|^{-1/2}\big).
\]
Furthermore, by the asymptotics of the pointwise variance in Proposition~\ref{prop:asymptotic.variance} under the assumption that $f$ is Lipschitz continuous and bounded on the half-space $\mathcal{H}(\bb{\beta})$, one has
\[
s_{\mat{H}} = |\mat{H}|^{-1/4} \sqrt{\frac{f(\bb{\xi})}{(4\pi \bb{\beta}^{\top} \bb{\xi})^{d/2}}} \, \{1 + \oo_{\bb{\beta},\bb{\xi}}(1)\}.
\]
Therefore, by combining the last two equations, one gets
\begin{equation}\label{eq:pre.asymp.normality.Lindeberg.condition.verify}
\frac{|Z_{1,\mat{H}}(\bb{\xi})|}{n^{1/2} s_{\mat{H}}} = \OO_{\bb{\beta},\bb{\xi}}\big(n^{-1/2} |\mat{H}|^{-1/4}\big) \to 0,
\end{equation}
whenever $n^{1/2} |\mat{H}|^{1/4}\to \infty$ as $n\to \infty$. The convergence to zero in~\eqref{eq:pre.asymp.normality.Lindeberg.condition.verify} shows that~\eqref{eq:pre.asymp.normality.Lindeberg.condition} holds because $\e\in (0,\infty)$ being fixed implies that the indicator function in~\eqref{eq:pre.asymp.normality.Lindeberg.condition} is equal to zero for $n$ large enough, independently of $\omega$.

Hence, by Proposition~\ref{prop:asymptotic.variance} again, one deduces that
\[
\begin{aligned}
&n^{1/2} |\mat{H}|^{1/4} \left[\hat{f}_{n,\mat{H}}(\bb{\xi}) - \EE\big\{\hat{f}_{n,\mat{H}}(\bb{\xi})\big\}\right] \\
&\qquad= n^{1/2} |\mat{H}|^{1/4} \frac{1}{n} \sum_{i=1}^n Z_{i,\mat{H}}(\bb{\xi}) \rightsquigarrow \mathcal{N}_d \left[ 0,\frac{f(\bb{\xi})}{(4\pi \bb{\beta}^{\top} \bb{\xi})^{d/2}} \right].
\end{aligned}
\]
This concludes the proof.

\section{Technical lemmas and derivations}\label{app:technical.lemmas}

\subsection{Uniform bound on the MIG density}\label{app:uniform.bound.MIG.density}

A uniform bound on the MIG density in \eqref{eq:general.multivariate.density} is stated and proved below.

\begin{lemma}\label{lem:uniform.bound}
For any vectors $\bb{\beta}\in \R^d$ and $\bb{\xi}\in \mathcal{H}(\bb{\beta})$, and any positive definite matrix $\mat{\Omega}\in \mathcal{S}_{++}^d$, one has
\[
\begin{aligned}
\sup_{\bb{x}\in \mathcal{H}(\bb{\beta})} k_{\bb{\beta},\bb{\xi},\mat{\Omega}}(\bb{x})
\leq \frac{\bb{\beta}^{\top} \bb{\xi}}{(2\pi)^{d/2} \, |\mat{\Omega}|^{1/2}}
\times \max\left\{\frac{2}{\bb{\beta}^{\top} \bb{\xi}},\frac{(d/2+1)}{e} \frac{8 \|\mat{\Omega}\|_2 \|\bb{\beta}\|_2^2}{(\bb{\beta}^{\top} \bb{\xi})^2}\right\}^{d/2 + 1}.
\end{aligned}
\]
In particular, if one assumes that $\|\mat{\Omega}\|_2\to 0$, then one has
\[
\sup_{\bb{x}\in \mathcal{H}(\bb{\beta})} k_{\bb{\beta},\bb{\xi},\mat{\Omega}}(\bb{x}) = \OO_{\bb{\beta},\bb{\xi}}\big(|\mat{\Omega}|^{-1/2}\big).
\]
\end{lemma}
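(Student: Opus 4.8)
The plan is to reduce the $d$-dimensional supremum to a one-variable optimization in the scalar $t = \bb{\beta}^{\top}\bb{x}\in(0,\infty)$, and then to resolve that one-variable problem by splitting the range of $t$ into two regimes.

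The first step is to bound the quadratic form in the exponent of \eqref{eq:general.multivariate.density.recalled} from below using only $t$. Since $\mat{\Omega}^{-1}$ is symmetric positive definite with smallest eigenvalue $\|\mat{\Omega}\|_2^{-1}$, one has $(\bb{x}-\bb{\xi})^{\top}\mat{\Omega}^{-1}(\bb{x}-\bb{\xi}) \geq \|\mat{\Omega}\|_2^{-1}\|\bb{x}-\bb{\xi}\|_2^2$, and Cauchy--Schwarz gives $\|\bb{x}-\bb{\xi}\|_2 \geq |\bb{\beta}^{\top}(\bb{x}-\bb{\xi})|/\|\bb{\beta}\|_2 = |t-\bb{\beta}^{\top}\bb{\xi}|/\|\bb{\beta}\|_2$. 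Writing $c=\bb{\beta}^{\top}\bb{\xi}$ and $a=\|\mat{\Omega}\|_2\|\bb{\beta}\|_2^2$, this yields $(\bb{x}-\bb{\xi})^{\top}\mat{\Omega}^{-1}(\bb{x}-\bb{\xi}) \geq (t-c)^2/a$, and therefore
\[
\sup_{\bb{x}\in\mathcal{H}_d(\bb{\beta})} k_{\bb{\beta},\bb{\xi},\mat{\Omega}}(\bb{x}) \leq (2\pi)^{-d/2}\, c\, |\mat{\Omega}|^{-1/2} \sup_{t>0}\Big\{ t^{-(d/2+1)}\exp\big(-\tfrac{(t-c)^2}{2at}\big)\Big\}.
\]

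The second step is to estimate the scalar supremum by distinguishing $t\geq c/2$ from $t<c/2$. On $\{t\geq c/2\}$ the exponential factor is at most $1$ and $t^{-(d/2+1)}\leq(2/c)^{d/2+1}$. On $\{t<c/2\}$ one has $(t-c)^2>c^2/4$, hence $(t-c)^2/(2at)>c^2/(8at)$, so the bracketed quantity is bounded by $t^{-(d/2+1)}\exp\{-c^2/(8at)\}$; substituting $u=1/t$ turns this into $u^{p}e^{-\gamma u}$ with $p=d/2+1$ and $\gamma=c^2/(8a)$, whose maximum over $u>0$ is $(p/(e\gamma))^{p} = \{(d/2+1)\,8a/(ec^2)\}^{d/2+1}$. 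Taking the larger of the two regime bounds --- legitimate because $s\mapsto s^{d/2+1}$ is increasing on $(0,\infty)$ --- and recalling $c=\bb{\beta}^{\top}\bb{\xi}$, $a=\|\mat{\Omega}\|_2\|\bb{\beta}\|_2^2$ gives precisely the displayed inequality of the lemma after multiplying back by $(2\pi)^{-d/2}\bb{\beta}^{\top}\bb{\xi}\,|\mat{\Omega}|^{-1/2}$.

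For the final ``in particular'' assertion, I would observe that as $\|\mat{\Omega}\|_2\to0$ the quantity $a=\|\mat{\Omega}\|_2\|\bb{\beta}\|_2^2$ tends to $0$, so the second entry of the maximum vanishes while the first, $2/\bb{\beta}^{\top}\bb{\xi}$, is fixed; hence for $\|\mat{\Omega}\|_2$ small enough the maximum equals $2/\bb{\beta}^{\top}\bb{\xi}$ and the bound collapses to a constant depending only on $d,\bb{\beta},\bb{\xi}$ times $|\mat{\Omega}|^{-1/2}$, which is the claimed $\OO_{\bb{\beta},\bb{\xi}}(|\mat{\Omega}|^{-1/2})$. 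I do not foresee a genuine obstacle here; the only delicate point is the bookkeeping in the case split and in the elementary maximization of $u^{p}e^{-\gamma u}$, since an arithmetic slip there would alter the numerical constant $8$ in the statement.
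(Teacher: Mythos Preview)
Your proof is correct and follows essentially the same approach as the paper: both use the eigenvalue lower bound $(\bb{x}-\bb{\xi})^{\top}\mat{\Omega}^{-1}(\bb{x}-\bb{\xi})\geq\|\mat{\Omega}\|_2^{-1}\|\bb{x}-\bb{\xi}\|_2^2$, Cauchy--Schwarz to link $\|\bb{x}-\bb{\xi}\|_2$ with $|\bb{\beta}^{\top}\bb{x}-\bb{\beta}^{\top}\bb{\xi}|$, a two-regime case split, and the elementary maximization of $y^{p}e^{-\gamma y}$. The only cosmetic difference is that you first reduce to a one-variable problem in $t=\bb{\beta}^{\top}\bb{x}$ and then split on $t\gtrless c/2$, whereas the paper splits directly in $\R^d$ on $\|\bb{x}-\bb{\xi}\|_2\gtrless \bb{\beta}^{\top}\bb{\xi}/(2\|\bb{\beta}\|_2)$; the resulting bounds in each regime are identical.
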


\begin{proof}[Proof of Lemma~\ref{lem:uniform.bound}]
There are two cases to consider, namely
\[
\mathrm{(i)} \quad \|\bb{x} - \bb{\xi}\|_2 < \frac{\bb{\beta}^{\top} \bb{\xi}}{2 \|\bb{\beta}\|_2}, \qquad \mathrm{(ii)} \quad \|\bb{x} - \bb{\xi}\|_2 \geq \frac{\bb{\beta}^{\top} \bb{\xi}}{2 \|\bb{\beta}\|_2}.
\]

In the first case, an application of the Cauchy--Schwarz inequality yields
\[
|\bb{\beta}^{\top} \bb{x} - \bb{\beta}^{\top} \bb{\xi}| = |\bb{\beta}^{\top} (\bb{x} - \bb{\xi})| \leq \|\bb{\beta}\|_2 \|\bb{x} - \bb{\xi}\|_2 < \frac{\bb{\beta}^{\top} \bb{\xi}}{2}.
\]
From this, one deduces that $\bb{\beta}^{\top} \bb{x} \geq \bb{\beta}^{\top} \bb{\xi}/2$, or equivalently $(\bb{\beta}^{\top} \bb{x})^{-1} \!\leq 2/ (\bb{\beta}^{\top} \bb{\xi})$.
Therefore, if the exponential is bounded above by $1$ in $k_{\bb{\beta},\bb{\xi},\mat{\Omega}}(\bb{x})$, one gets
\begin{equation}\label{eq:lem:uniform.bound.1}
k_{\bb{\beta},\bb{\xi},\mat{\Omega}}(\bb{x}) \leq \frac{\bb{\beta}^{\top} \bb{\xi}}{(2\pi)^{d/2} \, |\mat{\Omega}|^{1/2}} \left(\frac{2}{\bb{\beta}^{\top} \bb{\xi}}\right)^{d/2+1}.
\end{equation}

In the second case, one initially observes that the eigenvalues of $\mat{\Omega}^{-1}$ are the inverses of the eigenvalues of $\mat{\Omega}$. Given that $\|\mat{\Omega}\|_2$ corresponds to the largest eigenvalue of $\mat{\Omega}$, $\|\mat{\Omega}\|_2^{-1}$ corresponds to the smallest eigenvalue of $\mat{\Omega}^{-1}$. One deduces the lower bound
\[
(\bb{x} - \bb{\xi})^{\top} \mat{\Omega}^{-1} (\bb{x} - \bb{\xi})
\geq \|\mat{\Omega}\|_2^{-1} \|\bb{x} - \bb{\xi}\|_2^2
\geq \|\mat{\Omega}\|_2^{-1} \frac{(\bb{\beta}^{\top} \bb{\xi})^2}{4 \|\bb{\beta}\|_2^2}.
\]
Hence, with the notation $y = (\bb{\beta}^{\top} \bb{x})^{-1}$, one has
\[
\begin{aligned}
k_{\bb{\beta},\bb{\xi},\mat{\Omega}}(\bb{x})
&= \frac{\bb{\beta}^{\top} \bb{\xi}}{(2\pi)^{d/2} \, |\mat{\Omega}|^{1/2}} \, y^{d/2 + 1} \exp\left\{- \frac{y}{2} (\bb{x} - \bb{\xi})^{\top} \mat{\Omega}^{-1} (\bb{x} - \bb{\xi})\right\} \\
&\leq \frac{\bb{\beta}^{\top} \bb{\xi}}{(2\pi)^{d/2} \, |\mat{\Omega}|^{1/2}} \, y^{d/2 + 1} \exp\left\{- y \frac{(\bb{\beta}^{\top} \bb{\xi})^2}{8 \|\mat{\Omega}\|_2 \|\bb{\beta}\|_2^2}\right\}.
\end{aligned}
\]
For any positive constant $c\in (0,\infty)$, straightforward calculations show that the function $y\mapsto y^{d/2+1} \exp(-y/c)$ is maximized at the point $y = (d/2+1) c$.
Therefore, one finds
\begin{equation}\label{eq:lem:uniform.bound.2}
k_{\bb{\beta},\bb{\xi},\mat{\Omega}}(\bb{x}) \leq \frac{\bb{\beta}^{\top} \bb{\xi}}{(2\pi)^{d/2} \, |\mat{\Omega}|^{1/2}} \left\{\frac{(d/2+1)}{e} \frac{8 \|\mat{\Omega}\|_2 \|\bb{\beta}\|_2^2}{(\bb{\beta}^{\top} \bb{\xi})^2}\right\}^{d/2 + 1}.
\end{equation}
The conclusion then follows from~\eqref{eq:lem:uniform.bound.1} and~\eqref{eq:lem:uniform.bound.2}.
\end{proof}

\subsection{Derivation of the maximum likelihood estimator}\label{app:MLE}

\begin{proof}[Proof of Proposition~\ref{prop:MLE}]
Given the observations $\bb{x}_1,\ldots,\bb{x}_n$, the $\mathrm{MIG}(\bb{\beta},\bb{\xi},\mat{\Omega})$ log-likelihood with respect to the pair $(\bb{\xi},\mat{\Omega})$ is
\[
\begin{aligned}
\ell(\bb{\xi},\mat{\Omega})
&= \sum_{i=1}^n \ln\{k_{\bb{\beta},\bb{\xi},\mat{\Omega}}(\bb{x}_i)\} \\
&= -\frac{nd}{2} \ln(2\pi) + n \ln(\bb{\beta}^{\top} \bb{\xi}) + \frac{n}{2} \ln|\mat{\Omega}^{-1}| - (d/2 + 1) \sum_{i=1}^n \ln(\bb{\beta}^{\top} \bb{x}_i) \\
&\qquad- \frac{1}{2} \sum_{i=1}^n \frac{1}{\bb{\beta}^{\top} \bb{x}_i} (\bb{x}_i - \bb{\xi})^{\top} \mat{\Omega}^{-1} (\bb{x}_i - \bb{\xi}).
\end{aligned}
\]
First, consider the partial derivative with respect to $\mat{\Omega}^{-1}$. Given that
\[
\frac{\partial}{\partial \mat{\Omega}^{-1}} |\mat{\Omega}^{-1}| = |\mat{\Omega}^{-1}| (\mat{\Omega}^{-1})^{-1} = |\mat{\Omega}^{-1}| \mat{\Omega},
\]
and
\[
\frac{\partial}{\partial \mat{\Omega}^{-1}} (\bb{x}_i - \bb{\xi})^{\top} \mat{\Omega}^{-1} (\bb{x}_i - \bb{\xi}) = (\bb{x}_i - \bb{\xi}) (\bb{x}_i - \bb{\xi})^{\top},
\]
one has
\[
\begin{aligned}
\frac{\partial}{\partial \mat{\Omega}^{-1}} \, \ell(\bb{\xi},\mat{\Omega})
&= \frac{n}{2} \, \mat{\Omega} - \frac{1}{2} \sum_{i=1}^n \frac{1}{\bb{\beta}^{\top} \bb{x}_i} (\bb{x}_i - \bb{\xi})(\bb{x}_i - \bb{\xi})^{\top}
= \mat{0}_{d\times d} \\
& \Leftrightarrow \quad
\mat{\Omega}
=\frac{1}{n}\sum_{i=1}^n \frac{1}{\bb{\beta}^{\top} \bb{x}_i}(\bb{x}_i - \bb{\xi})(\bb{x}_i - \bb{\xi})^{\top}.
\end{aligned}
\]
Alternatively, one can write the expression for $\mat{\Omega}$ as follows:
\[
\mat{\Omega} = \frac{1}{n}\sum_{i=1}^n \frac{\bb{x}_i \bb{x}_i^{\top}}{\bb{\beta}^{\top} \bb{x}_i}-\frac{1}{n}\sum_{i=1}^n \frac{\bb{x}_i\bb{\xi}^{\top}}{\bb{\beta}^{\top} \bb{x}_i} -\frac{1}{n}\sum_{i=1}^n \frac{\bb{\xi}\bb{x}_i^{\top}}{\bb{\beta}^{\top} \bb{x}_i} + \frac{1}{n}\sum_{i=1}^n \frac{\bb{\xi}\bb{\xi}^{\top}}{\bb{\beta}^{\top} \bb{x}_i}.
\]

Consider now the partial derivative with respect to $\bb{\xi}$. Given that
\[
\frac{\partial}{\partial \bb{\xi}}\bb{\beta}^{\top} \bb{\xi}=\bb{\beta},
\]
and
\[
\frac{\partial}{\partial \bb{\xi}}(\bb{x}_i - \bb{\xi})^{\top} \mat{\Omega}^{-1} (\bb{x}_i - \bb{\xi})=-2\mat{\Omega}^{-1} (\bb{x}_i - \bb{\xi}),
\]
one has
\[
\frac{\partial}{\partial \bb{\xi}}\ell(\bb{\xi},\mat{\Omega}) = \bb{0}_d \quad \Leftrightarrow \quad \frac{n\bb{\beta}}{\bb{\beta}^{\top} \bb{\xi}} +\mat{\Omega}^{-1}\sum_{i=1}^n \frac{1}{\bb{\beta}^{\top} \bb{x}_i} (\bb{x}_i - \bb{\xi}) = \bb{0}_d.
\]
Now the right-hand identity holds if and only if
\[
\frac{n\mat{\Omega}\bb{\beta}}{\bb{\beta}^{\top} \bb{\xi}} + \sum_{i=1}^n \frac{1}{\bb{\beta}^{\top} \bb{x}_i} (\bb{x}_i - \bb{\xi}) = \bb{0}_d
\]
or, equivalently,
\[
\sum_{i=1}^n \frac{\bb{x}_i}{\bb{\beta}^{\top} \bb{\xi}}-\sum_{i=1}^n \frac{\bb{x}_i}{\bb{\beta}^{\top} \bb{x}_i}-\sum_{i=1}^n \frac{\bb{\xi}}{\bb{\beta}^{\top} \bb{\xi}} + \sum_{i=1}^n \frac{\bb{\xi}}{\bb{\beta}^{\top} \bb{x}_i}
+\sum_{i=1}^n \frac{\bb{x}_i}{\bb{\beta}^{\top} \bb{x}_i} -\sum_{i=1}^n \frac{\bb{\xi}}{\bb{\beta}^{\top} \bb{x}_i} = \bb{0}_d.
\]
Given that
\[
\frac{1}{\bb{\beta}^{\top} \bb{\xi}}\sum_{i=1}^n \bb{x}_i-\frac{n\bb{\xi}}{\bb{\beta}^{\top} \bb{\xi}} = \bb{0}_d \quad
\Leftrightarrow \quad \bb{\xi} = \frac{1}{n}\sum_{i=1}^n \bb{x}_i,
\]
it follows that the maximum likelihood estimator for the pair $(\bb{\xi},\mat{\Omega})$ is $(\bb{\xi}_n^{\star},\mat{\Omega}_n^{\star})$, where
\[
\bb{\xi}_n^{\star} = \frac{1}{n}\sum_{i=1}^n \bb{X}_i,
\quad
\mat{\Omega}_n^{\star} = \frac{1}{n} \sum_{i=1}^n \frac{1}{\bb{\beta}^{\top} \bb{X}_i}(\bb{X}_i - \bar{\bb{X}}_n)(\bb{X}_i - \bar{\bb{X}}_n)^{\top}.
\]
This concludes the proof.
\end{proof}

\subsection{Hessian of the MIG density}\label{app:gradient.Hessian}
As shown in Corollary~\ref{cor:MISE.optimal.density}, the optimal bandwidth matrix with respect to the MISE depends on the Hessian of the target density. Therefore, in the simulation study (Section~\ref{subsec:simulation.bandwidth.selection}), the Hessian of the MIG density~\eqref{eq:general.multivariate.density} is needed to implement the bandwidth selection methods $A$ and $B$, which are based on AMISE minimization with an MIG plug-in. An explicit formula is required, since numerical differentiation may produce infinite values.

The gradient of the logarithm of the $\mathrm{MIG}(\bb{\beta}, \bb{\xi}, \mat{\Omega})$ density ($k \equiv k_{\bb{\beta},\bb{\xi},\mat{\Omega}}$) with respect to $\bb{x}\in \mathcal{H}(\bb{\beta})$ is
\[
\frac{\partial}{\partial \bb{x}} \ln\{ k(\bb{x})\} = - \frac{(d/2 + 1)\bb{\beta} + \mat{\Omega}^{-1}(\bb{x}-\bb{\xi})}{\bb{\beta}^{\top}\bb{x}} +
\frac{(\bb{x}-\bb{\xi})^{\top}\mat{\Omega}^{-1}(\bb{x}-\bb{\xi})\bb{\beta}}{2(\bb{\beta}^{\top}\bb{x})^2},
\]
while the Hessian is
\[
\begin{aligned}
\frac{\partial}{\partial \bb{x} \partial \bb{x}^{\top}} \ln\{ k(\bb{x})\}
&= - \frac{\mat{\Omega}^{-1}}{\bb{\beta}^{\top}\bb{x}} +
\frac{(d/2 + 1)\bb{\beta}\bb{\beta}^{\top} \!+ \mat{\Omega}^{-1}(\bb{x}-\bb{\xi})\bb{\beta}^{\top} \!+ \bb{\beta}(\bb{x}-\bb{\xi})^{\top}\mat{\Omega}^{-1}}{(\bb{\beta}^{\top}\bb{x})^2} \\
&\qquad- \frac{(\bb{x}-\bb{\xi})^{\top}\mat{\Omega}^{-1}(\bb{x}-\bb{\xi}) \bb{\beta}\bb{\beta}^{\top}}{(\bb{\beta}^{\top}\bb{x})^3}.
\end{aligned}
\]
The Hessian of the density is then easily found to be
\[
\begin{aligned}
\frac{\partial^2}{\partial \bb{x}\partial \bb{x}^{\top}} k(\bb{x})
&= \frac{\partial^2}{\partial \bb{x}\partial \bb{x}^{\top}} \exp\big[\ln\{k(\bb{x})\}\big] \\
&= \frac{\partial}{\partial \bb{x}} \left(\exp\big[\ln\{k(\bb{x})\}\big] \, \frac{\partial }{\partial \bb{x}^{\top}} \ln\{k(\bb{x})\}\right) \\
&= k(\bb{x}) \left[ \frac{\partial }{\partial \bb{x}} \ln\{k(\bb{x})\}\frac{\partial }{\partial \bb{x}^{\top}} \ln\{k(\bb{x})\} + \frac{\partial ^2}{\partial \bb{x}\partial \bb{x}^{\top}} \ln\{k(\bb{x})\}\right].
\end{aligned}
\]

\section{Cumulative distribution function estimator for the MIG distribution}\label{app:cdf.evaluation}

In this section, the stochastic representation in Proposition~\ref{prop:exact.sim} is leveraged to evaluate the cdf of a $d$-variate MIG random vector, $\bb{X} \sim \mathrm{MIG}(\bb{\beta}, \bb{\xi}, \mat{\Omega})$.

\subsection{General strategy}

Let $f_R \equiv k_{\bb{\beta}^{\top}\!\bb{\xi},\hspace{0.3mm}(\bb{\beta}^{\top}\!\bb{\xi})^2\!/(\bb{\beta}^{\top}\!\mat{\Omega}\bb{\beta})}$ and $F_R$ denote the density and cdf, respectively, of the univariate inverse Gaussian random variable $R$ given in \eqref{eq:Z2}. By Proposition~\ref{prop:exact.sim}, the cdf of $\bb{X} \sim \mathrm{MIG}(\bb{\beta}, \bb{\xi}, \mat{\Omega})$ can be written, at any $\bb{q}\in \R^d$, as
\begin{equation}\label{eq:cdf.MIG}
\begin{aligned}
&\Pr(\bb{X} \leq \bb{q}) \\
&\quad= \int_{r_{\min}}^{r_{\max}} \int_{\R^{d-1}} \ind_{[\bb{z}_{\min}(r), \bb{z}_{\max}(r)]}(\bb{z}) \phi_{d-1}\{\bb{z} ; \bb{\mu}(r), r\bb{\Sigma}\}f_R(r) \rd \bb{z}\rd r,
\end{aligned}
\end{equation}
where $\bb{\mu} $ and $\bb{\Sigma}$ are given in \eqref{eq:cond.mean.var},
\[
r_{\min} = \max\Big(0, \min_{\bb{x}\in (-\bb{\infty}_d, \bb{q}]} \bb{\beta}^{\top} \bb{x}\Big), \quad r_{\max} = \max\Big(0, \max_{\bb{x}\in (-\bb{\infty}_d, \bb{q}]} \bb{\beta}^{\top} \bb{x}\Big),
\]
with $-\bb{\infty}_d = (-\infty,\ldots,-\infty)^{\top}$, and $\phi_{k}( \, \cdot \, ; \bb{m}, \mat{C})$ is the density of a $k$-variate Gaussian random vector with mean $\bb{m}$ and covariance matrix $\mat{C}$. Derivation of the integration bounds for $R$ and $\bb{Z} \mid \{R = r\}$ requires some work, as the region of integration is a rotated polytope.

Given that $\bb{Z} \mid \{R = r\}$ is a location and scale mixture of Gaussian random vectors, separation-of-variables \citep{Genz.Bretz:2009} can be used to evaluate the innermost Gaussian integral in \eqref{eq:cdf.MIG}. Let $\mat{L}$ denote the lower triangular Cholesky root of the covariance matrix $\bb{\Sigma} = \mat{L} \mat{L}^{\top}$, and let $\bb{Y} \sim \mathcal{N}_{d-1}(\bb{0}_{d-1}, \mat{I}_{d-1})$ be a standard Gaussian random vector. The triangular system of equations leads to a sequential decomposition of the region of integration, namely
\[
\Pr(\bb{Z} \leq \bb{z} \mid R = r) = \int_{a_1}^{b_1} \cdots \int_{a_{d-1}}^{b_{d-1}} \phi_{d-1}(\bb{y} ; \bb{0}_{d-1}, \mat{I}_{d-1}) \rd \bb{y},
\]
where
\[
\begin{aligned}
a_1 &= L_{11}^{-1}r^{-1/2}\{z_{\min,1}(r)-{\mu}_i(r)\}, \\
b_1 &= L_{11}^{-1}r^{-1/2}\{z_{\max,1}(r)-{\mu}_i(r)\},
\end{aligned}
\]
and, for every $j\in \{2, \dots, d-1\}$,
\[
\begin{aligned}
a_j &= {L_{jj}^{-1}}\left[r^{-1/2}\{z_{\min,j}(r, z_1, \ldots, z_{j-1}) - \mu_j(r)\} - \sum_{i=1}^{j-1}L_{ji}y_i \right], \\
b_j &= {L_{jj}^{-1}}\left[r^{-1/2}\{z_{\max,j}(r, z_1, \ldots, z_{j-1}) - \mu_j(r)\} - \sum_{i=1}^{j-1}L_{ji}y_i \right].
\end{aligned}
\]

This suggests that a sensible Monte Carlo estimator can be constructed using sequential importance sampling with
\[
\begin{aligned}
g_{\mathsf{sov}}(r, \bb{y})
&= \frac{f_{R}(r)}{F_R(r)} f(y_1\mid r) \prod_{j=2}^{d-1} f(y_j \mid r, y_1, \ldots, y_{j-1}) \\
&= \frac{f_{R}(r)}{F_R(r)} \prod_{j=1}^{d-1} \frac{\phi(y_j)\ind_{[a_j, b_j]}(y_j)}{\Phi(b_j) - \Phi(a_j)}
\end{aligned}
\]
as the importance sampling density, where $\phi$ and $\Phi$ denote the univariate standard normal density and cdf, respectively. Univariate draws from the truncated inverse Gaussian distribution can be obtained using rejection sampling. Based on $T$ draws from the cdf associated to $g_{\mathsf{sov}}$, the separation-of-variable estimator of the integral in \eqref{eq:cdf.MIG} is
\begin{equation}\label{eq:sov}
F_R(r) \times \frac{1}{T} \sum_{t=1}^T \prod_{j=1}^{d-1} \big\{ \Phi (b_j^{(t)}) - \Phi (a_j^{(t)}) \big\}.
\end{equation}
The separation-of-variables algorithm is described on p.~50 of~\citet{Genz.Bretz:2009} using a randomized quasi Monte Carlo procedure; see also \citet{Hintz.Hofert.Lemieux:2021}, who discuss alternative quasi Monte Carlo schemes and reordering strategies for scale mixtures.

\subsection{Integration bounds in dimension \texorpdfstring{$d = 2$}{d = 2} and \texorpdfstring{$d \geq 3$}{d >= 3}}

It is instructive to consider first the bivariate case. Given $\bb{\beta}=(\beta_1, \beta_2)^{\top}\!\in \R^2$, denote the radius $R = \bb{\beta}^{\top}\bb{X}\in (0,\infty)$. Take $\mat{Q}_2 = (-\beta_2, \beta_1)/|\bb{\beta}|_2$ to be a vector of unit length orthogonal to $\bb{\beta}$.

The probability of falling in the set $(-\bb{\infty}_{2}, \bb{q}]$ is
\[
\Pr(\bb{X} \leq \bb{q}) = \int_{r_{\min}}^{r_{\max}} \Pr\{ z_{\min}(r) \leq Z \leq z_{\max}(r) \mid R = r\}f_R(r) \rd r.
\]
Figure~\ref{fig:pmig_rotation} illustrates how the region of integration changes, depending on the position of $(\bb{-\infty}_2, \bb{q}]$ with respect to the half-space $\mathcal{H}_2(\bb{\beta})$. First, obtain the coordinates of the vertices defining the region of integration, $(\bb{-\infty}_2, \bb{q}] \cap \mathcal{H}_2(\bb{\beta})$. For each of these vertices, apply the linear transformation $\mat{Q}_2$ and observe where these points are mapped to get integration bounds for $R$ and $Z \mid R$.

\begin{figure}[!htp]
\centering
\includegraphics[width = 0.94\linewidth]{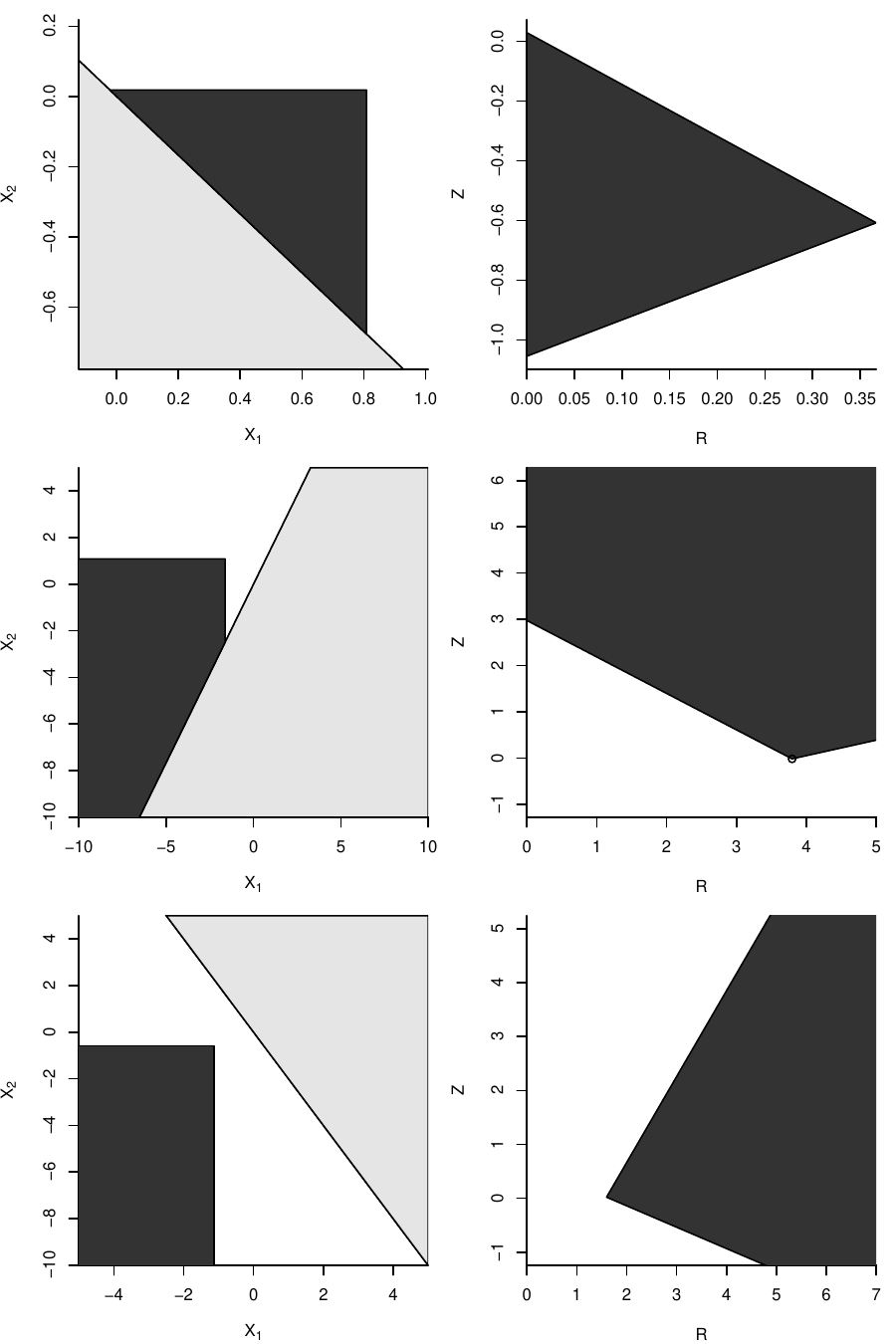}
\caption{Changes in the region of integration (black) induced by the linear transformation $\mat{Q}$ from the original space $(\bb{-\infty}_2, \bb{q}] \cap \mathcal{H}_2(\bb{\beta})$ (left) to $\mat{Q} \{(\bb{-\infty}_2, \bb{q}] \cap \mathcal{H}_2(\bb{\beta})\}$ (right) for different combinations of $\bb{\beta}$ with $\beta_1 >0, \beta_2>0$ (top), $\beta_1 <0, \beta_2 > 0$ (middle) and $\beta_1 < 0, \beta_2 < 0$ (bottom). The shaded light gray regions in the left panels indicate values outside of the support of the distribution.}
\label{fig:pmig_rotation}
\end{figure}

Let $\bb{e}_1 = (1, 0)^{\top}$ and $\bb{e}_2 = (0,1)^{\top}$ be the two standard unit vectors in $\R^2$. A careful case-by-case analysis leads to the following integration bounds:
\begin{enumerate}
\item If $\beta_1>0, \beta_2>0$, then $\Pr(\bb{X} \leq \bb{q}) = 0$ if $\bb{\beta}^{\top}\bb{q} \leq 0$. Otherwise, the vertices of the triangle defining the region of integration are $\{\bb{q}, \bb{v}_1 = (q_1, -\beta_1q_1/\beta_2)^{\top}$, $\bb{v}_2 = (-\beta_2q_2/\beta_1, q_2)^{\top}\}$. In the transformed space, the region of integration is a rotated triangle which goes from $r=0$, with $z_{\min}(0) = \mat{Q}_2 \bb{v}_1$ and $z_{\max}(0) = \mat{Q}_2 \bb{v}_2$, to $r_{\max}=\bb{\beta}^{\top}\bb{q}$, with
    \[
    \begin{aligned}
    &z_{\min}(r) = \{r/(\bb{\beta}^{\top}\bb{e}_2)\} \mat{Q}_2\bb{e}_2 + z_{\min}(0), \\
    &z_{\max}(r) = \{r/(\bb{\beta}^{\top}\bb{e}_1)\} \mat{Q}_2\bb{e}_1 + z_{\max}(0).
    \end{aligned}
    \]
\item If $\beta_1 < 0$ and $\beta_2 > 0$, there are two cases. If $\bb{q} \notin \mathcal{H}_2(\bb{\beta})$, then the region of integration is a triangle, and so is its transformed counterpart. The bounds go from $r_{\min}=0$ to $r_{\max} = \infty$, with $z_{\min}(r) = \bb{\beta}^{\top}\bb{q} - (r/\beta_1) \mat{Q}_2\bb{e}_1$. If $\bb{q} \in \mathcal{H}_2(\bb{\beta})$, then $R$ ranges from $r_{\min} = \bb{\beta}^{\top} \bb{q}$ to $r_{\max} = \infty$, while
    \[
    \begin{aligned}
    &z_{\min}(r) = \max\{\mat{Q}_2\bb{v}_2 + (r/\beta_1) \mat{Q}_2\bb{e}_1, \mat{Q}_2\bb{v}_1 + (r/\beta_2) \mat{Q}_2\bb{e}_2\}, \\
    &z_{\max}(r) = \infty.
    \end{aligned}
    \]
\item If $\beta_1 > 0$ and $\beta_2 < 0$, then $R$ ranges from $r_{\min}=0$ to $r_{\max} = \infty$, and
    \[
    \begin{aligned}
    &z_{\min}(r) = -\infty, \\
    &z_{\max}(r) = \min\{\mat{Q}_2\bb{v}_2 + (r/\beta_1) \mat{Q}_2\bb{e}_1, \mat{Q}_2\bb{v}_1 + (r/\beta_2) \mat{Q}_2\bb{e}_2\}.
    \end{aligned}
    \]
\item If $\beta_1 < 0$ and $\beta_2 < 0$, then the integration goes from $r_{\min} = \max(0, \bb{\beta}^{\top}\bb{q})$ to $r_{\max} = \infty$, where
    \[
    \begin{aligned}
    &z_{\min}(r) = \mat{Q}_2\bb{v}_2 + (r/\beta_1) \mat{Q}_2\bb{e}_1, \\
    &z_{\max}(r) = \mat{Q}_2\bb{v}_1 + (r/\beta_2) \mat{Q}_2\bb{e}_2.
    \end{aligned}
    \]
\item If $\beta_1 =0$, then $r_{\min} = 0$ and $r_{\max} = \bb{\beta}^{\top}\bb{q}$ with
    \begin{enumerate}[(a)]\setlength\itemsep{0em}
    \item $z_{\min}= - q_1$, $z_{\max} = \infty$ if $\beta_2 > 0$;
    \item $z_{\min} = -\infty$, $z_{\max} = q_1$ if $\beta_2 < 0$.
    \end{enumerate}
    \item If $\beta_2 = 0$, then $r_{\min} = \bb{\beta}^{\top}\bb{q}$ and $r_{\max} = \infty$, with
    \begin{enumerate}[(a)]\setlength\itemsep{0em}
    \item $z_{\min}= -\infty$, $z_{\max} = q_2$ if $\beta_1 > 0$;
    \item $z_{\min} = -q_2$, $z_{\max} = \infty$ if $\beta_1 < 0$.
    \end{enumerate}
\end{enumerate}

\bigskip

In dimension $d \geq 3$, sequential sampling can also be used, but this approach is expensive relative to regular Monte Carlo. Considering that integration bounds for $r$, $z_1$, \ldots, $z_{d-1}$ are found by solving linear programs for each simulated point, this requires solving $\OO(T d)$ linear programs in dimension $d$ if $T$ Monte Carlo replications are drawn.

The forward sampling algorithm is as follows. First, obtain the lower and upper bounds for the radius $R$,
\[
r_{\min} = \max(0, \min_{\bb{x}} \bb{\beta}^{\top} \bb{x}) \quad \text{and} \quad r_{\max} = \max(0, \max_{\bb{x}} \bb{\beta}^{\top} \bb{x}),
\]
by solving the two optimization problems subject to the inequality constraint $\bb{x} \leq \bb{q}$. Next, generate $T$ univariate inverse Gaussian random variables truncated between $r_{\min}$ and $r_{\max}$. Then, for each simulated value of $r$, proceed by forward sampling from the truncated conditional Gaussian distribution of $z_{j} \mid z_{1}, \ldots, z_{j-1}, r$. The truncation bounds can be obtained again by solving a linear program, where
\[
z_{\min,i}(r) = \min_{\bb{x}} (\mat{Q}_2\bb{x})_i \quad \text{and} \quad z_{\max,i}(r) = \max_{\bb{x}} (\mat{Q}_2\bb{x})_i,
\]
subject to the equality constraints $\bb{\beta}^{\top}\bb{x} = r$, $(\mat{Q}_2\bb{x})_j = z_j$ for all $j\in \{1,\ldots,i-1\}$ ~($i>2$), and the same inequality constraint $\bb{x} \leq \bb{q}$. Finally, evaluate \eqref{eq:sov} with the simulated $T$ draws to obtain the estimator.

\section{Reproducibility}\label{app:R.code}
The \textsf{R} codes used to generate the figures, the simulation study results and the real-data application can be found \href{https://github.com/lbelzile/mig-kernel}{here}. The \textsf{R} package \texttt{mig} implements the kernel density estimator and is available from the \href{https://cran.r-project.org/web/packages/mig/index.html}{CRAN} \citep{mig:cran} and from GitHub at \url{https://github.com/lbelzile/mig}.
\section{List of abbreviations}\label{app:abbreviations}
\begin{tabular}{llll}
&AMISE  &\hspace{20mm} &asymptotic mean integrated squared error \\
&BRMISE &\hspace{20mm} &boundary root mean integrated squared error \\
&cdf    &\hspace{20mm} &cumulative distribution function \\
&IG     &\hspace{20mm} &inverse Gaussian \\
&iid    &\hspace{20mm} &independent and identically distributed \\
&KLD    &\hspace{20mm} &Kullback--Leibler divergence \\
&LCV    &\hspace{20mm} &likelihood cross-validation \\
&LSCV   &\hspace{20mm} &least square cross-validation \\
&MIG    &\hspace{20mm} &multivariate inverse Gaussian \\
&MISE   &\hspace{20mm} &mean integrated squared error \\
&RLCV   &\hspace{20mm} &robust likelihood cross-validation \\
&RMISE  &\hspace{20mm} &root mean integrated squared error \\
\end{tabular}

\end{appendix}

\begin{acks}[Acknowledgments]
The authors are grateful to the four anonymous reviewers and the Associate Editor for their constructive comments. Their valuable suggestions were instrumental in refining the quality and depth of the present study.
\end{acks}

\begin{funding}
Belzile acknowledges funding from the Natural Sciences and Engineering Research Council of Canada through Discovery Grant RGPIN-2022-05001. Ge\-nest's research and Ouimet's current postdoctoral fellowship are funded through the Canada Research Chairs Program (Grant 950-231937) and the Natural Sciences and Engineering Research Council of Canada (Discovery Grant RGPIN-2024-04088 to C.~Ge\-nest). Ouimet also wishes to acknowledge past support from a CRM-Simons postdoctoral fellowship from the Centre de recherches math\'ematiques (Montr\'eal, Canada) and the Simons Foundation.
\end{funding}

\bibliographystyle{imsart-nameyear}
\bibliography{bib}

\end{document}